\renewcommand{\baselinestretch}{1.1}
\newcommand{\doi}[1]{\href{http://dx.doi.org/#1}{\texttt{doi:#1}}}
\newcommand{\arxiv}[1]{\href{http://arxiv.org/abs/#1}{\texttt{arXiv:#1}}}
\newcommand{\ceil}[1]{\ensuremath{\protect\lceil#1\rceil}}
\theoremstyle{plain}
\newtheorem{theorem}{Theorem}
\newtheorem{lemma}[theorem]{Lemma}
\theoremstyle{definition}
\begin{document}

\title{\bf\vspace*{-4ex} Colouring the Triangles Determined by a Point
  Set}

\author{Ruy Fabila-Monroy\footnote{Departamento de Matem\'aticas,
    Cinvestav, Distrito Federal, M\'exico
    (\texttt{ruyfabila@math.cinvestav.edu.mx}). Supported by an
    Endeavour Fellowship from the Australian Government.}  \and
  David~R.~Wood\footnote{Department of Mathematics and Statistics, The
    University of Melbourne, Melbourne, Australia
    (\texttt{woodd@unimelb.edu.au}). Supported by a QEII Fellowship
    from the Australian Research Council.}}

\date{\today}

\maketitle

\begin{abstract}
  Let $P$ be a set of $n$ points in general position in the plane. We
  study the chromatic number of the intersection graph of the open
  triangles determined by $P$. It is known that this chromatic number
  is at least $\frac{n^3}{27}+O(n^2)$, and if $P$ is in convex
  position, the answer is $\frac{n^3}{24}+O(n^2)$. We prove that for
  arbitrary $P$, the chromatic number is at most
  $\frac{n^3}{19.259}+O(n^2)$.
\end{abstract}

\section{Introduction}
\label{sec:Intro}

Let $P$ be a set of $n$ points in general position in the plane (that
is, no three points are collinear). A triangle with vertices in $P$ is
said to be \emph{determined by} $P$.  Let $G_P$ be the intersection
graph of the set of all open triangles determined by $P$. That is, the
vertices of $G_P$ are the triangles determined by $P$, where two
triangles are adjacent if and only if they have an interior point in
common. This paper studies the chromatic number of $G_P$.

Consider a colour class $X$ in a colouring of $G_P$. Then $X$ is a set
of triangles determined by $P$, no two of which have an interior point
in common. If $P'\subseteq P$ is the union of the vertex sets of the
triangles in $X$, then there is a triangulation of $P'$ in which each
triangle in $X$ is a face. The converse also holds: the set of faces
in a triangulation of a subset of $P$ can all be assigned the same
colour in a colouring of $G_P$. Thus $\chi(G_P)$ can be considered to
be the minimum number of triangulations of subsets of $P$ that cover
all the triangles determined by $P$, where a triangulation $T$ covers
each if its faces.


First consider $\chi(G_P)$ for small values of $n$.  If $n=3$ then
$\chi(G_P)=1$ trivially. If $n=4$ then $\chi(G_P)=2$, as illustrated
in Figure~\ref{fig:FourPoints}. If $n=5$ then $\chi(G_P)=5$, as
illustrated in Figure~\ref{fig:FivePoints}.

\begin{figure}[H]
  \begin{center}\includegraphics{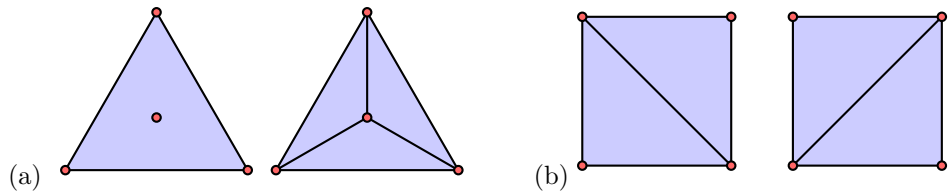}\end{center}
  \vspace*{-4ex}
  \caption{\label{fig:FourPoints} Colouring the triangles determined
    by four points: (a) non-convex position, (b) convex position. In
    both cases, $\chi(G_P)=\omega(G_P)=2$.}
\end{figure}

\begin{figure}
  \begin{center}\includegraphics{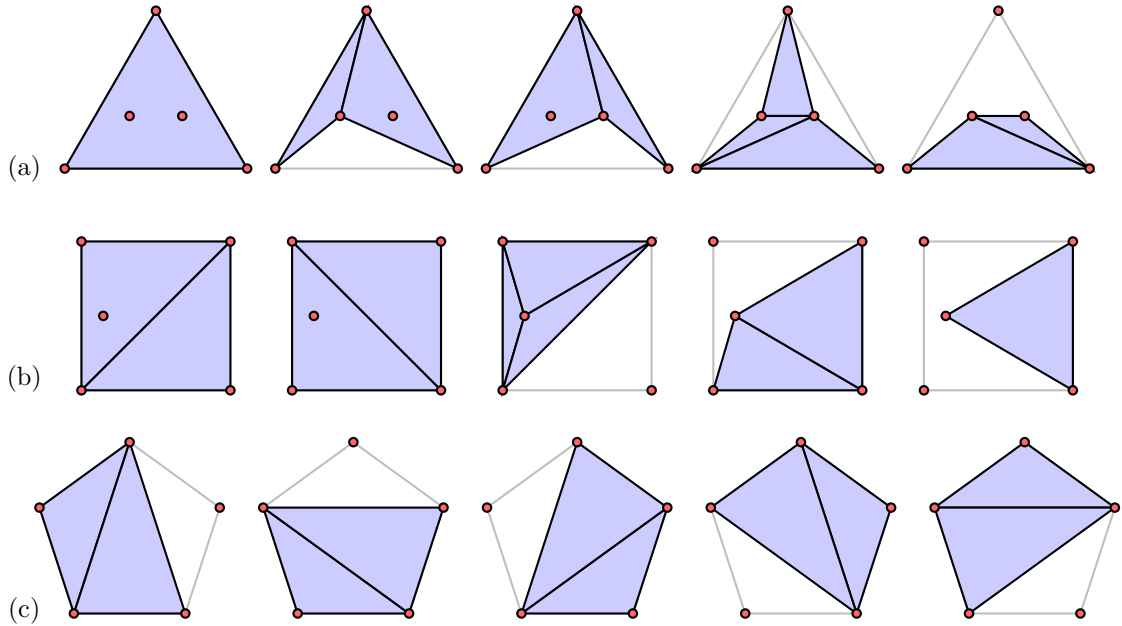}\end{center}
  \vspace*{-4ex}
  \caption{\label{fig:FivePoints} Colouring the triangles determined
    by five points: (a) three boundary points, (b) four boundary
    points, (c) five boundary points. In each case,
    $\chi(G_P)=\omega(G_P)=5$.}
\end{figure}

For $n=6$, we used the database of 16 distinct order types of 6 points
in general position \citep{AAK02}, and calculated $\chi(G_P)$ exactly
for each such set using sage \citep{sage}. As shown in
Appendix~\ref{SixPoints}, $\chi(G_P)=8$ for each 6-point set $P$. This
result will also be used in the proof of
Theorem~\ref{thm:ColourOpenTriangles} below.

It is interesting that $\chi(G_P)$ is invariant for sets of $n$
points, for each $n\leq 6$. However, this property does not hold for
$n=7$. If $P$ consists of 7 points in convex position, then
$\chi(G_P)=14$, whereas we have found a set $P$ of 7 points in general
position for which $\chi(G_P)=13$; see Appendix~\ref{sec:SevenPoints}.

Now consider $\chi(G_P)$ for arbitrarily large values of $n$.  If $P$
is in convex position then the problem is solved: results of
\citet{Cano} imply that
$$\chi(G_P)=
      \begin{cases}
        \tfrac{1}{24}\,(n-1)n(n+1)&\text{ if $n$ is odd}\\
        \tfrac{1}{24}\,(n-2)n(n+2)&\text{ if $n$ is even}\enspace.
      \end{cases}
$$
See Appendix~\ref{sec:Related} for a proof of this and other related
results.

Our main contribution is to prove the following bound for arbitrary
point sets, where $\omega(G_P)$ is the maximum order of a clique in
$G_P$.

\begin{theorem}
  \label{thm:ColourOpenTriangles}
  For every set $P$ of $n$ points in general position in the plane,
 $$\frac{n^3}{27}\;\leq\;\omega(G_P)\;\leq\;\chi(G_P)\;\leq\; 
 \frac{27n^3}{520}+O(n^2)=\frac{n^3}{19.259\ldots}+O(n^2)\enspace.$$
\end{theorem}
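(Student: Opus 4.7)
\emph{Lower bound.} The inequality $\omega(G_P) \geq \frac{n^3}{27}$ follows from the First Selection Lemma of Boros and F\"uredi: for $n$ points in general position in the plane there is a point $q \in \mathbb{R}^2$ lying in the interior of at least $\frac{2}{9}\binom{n}{3} = \frac{n^3}{27} + O(n^2)$ triangles determined by $P$. Any two such triangles share $q$ in their interiors, so the set of all these triangles is a clique in $G_P$. Together with the trivial $\omega(G_P) \leq \chi(G_P)$, this handles the first two inequalities.

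\emph{Upper bound, plan.} The plan is to cover all $\binom{n}{3}$ triangles of $P$ by at most $\frac{27n^3}{520} + O(n^2)$ triangulations of subsets. First, sweep $\lfloor n/6\rfloor - 1$ vertical lines to partition $P$ into $m = \lfloor n/6 \rfloor$ disjoint $6$-element groups $P_1,\ldots,P_m$, with at most $5$ leftover points. By construction the hulls $\conv(P_i)$ are pairwise disjoint (each lives in its own vertical strip). Classify every triangle by how its vertices distribute among the $P_i$:
\begin{enumerate}
\item[(a)] all three vertices in one $P_i$ --- contributing $20m = O(n)$ triangles;
\item[(b)] two vertices in one $P_i$ and one in another $P_j$ --- contributing $O(n^2)$ triangles;
\item[(c)] one vertex in each of three distinct groups $P_i, P_j, P_k$ --- contributing $\binom{m}{3}\cdot 6^3 = \frac{n^3}{6} - O(n^2)$ triangles.
\end{enumerate}
For (a) the base case $\chi(G_{P_i}) = 8$ from Appendix~\ref{SixPoints} colours the $20$ triangles of each $P_i$ by $8$ triangulations of $P_i$; since the hulls $\conv(P_i)$ are pairwise disjoint, these colourings combine compatibly into a single global palette of $8$ colours. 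For (b) assign each of the $O(n^2)$ triangles its own colour. Triangles touching the leftover points are at most $O(n^2)$ and absorbed the same way. Together (a), (b) and the leftovers cost $O(n^2)$.

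\emph{Main obstacle.} The dominant task is (c): covering the $\frac{n^3}{6}$ tripartite triangles by $\frac{27n^3}{520} + O(n^2)$ triangulations of subsets of $P$. Since $\binom{m}{3} = \frac{n^3}{1296} + O(n^2)$, this is equivalent to using on average at most $\frac{27\cdot 1296}{520}\approx 67$ colour classes per triple $(P_i, P_j, P_k)$ to cover its $6^3 = 216$ tripartite triangles --- equivalently, each colour class in this phase should contain about $216/67\approx 3.2$ tripartite triangles, matching the face count of triangulations of $4$- or $5$-point subsets of $P_i\cup P_j\cup P_k$. The hard part will be global consistency: each $P_i$ participates in $\binom{m-1}{2}$ triples, so the triangulations chosen locally for different triples must not assign the same tripartite triangle twice and, more delicately, must remain non-overlapping when two triples share a group. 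I expect the key step to be the construction of a uniform family of ``templates'' for the tripartite $6$-$6$-$6$ configurations --- indexed by the combinatorial type of the relative position of the three groups --- together with a verification that these templates assemble across the $\binom{m}{3}$ triples into a valid cover hitting the target of $\frac{27}{520}n^3 + O(n^2)$ colours exactly. The specific constant $27/520$ should emerge from optimising the template design to minimise the average number of colours needed per triple.
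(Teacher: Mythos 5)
Your lower bound is fine and matches the paper: Boros--F\"uredi gives a point in $\frac{n^3}{27}+O(n^2)$ triangles, and these form a clique. The upper bound, however, is not a proof but a plan, and the plan stalls exactly where the theorem's content lies. You partition $P$ into $m=\lfloor n/6\rfloor$ groups of six points in vertical strips, dispose of the $O(n^2)$ non-tripartite triangles, and then state that the ``main obstacle'' is to cover the $\frac{n^3}{6}$ tripartite triangles with $\frac{27}{520}n^3+O(n^2)$ triangulations --- which you leave entirely to a hoped-for family of ``templates'' whose existence you do not establish. Since triangles spanning different triples of strips can overlap geometrically, colour classes generally cannot be shared between triples, so your approach requires showing that the $216$ tripartite triangles of each $6$-$6$-$6$ configuration can be covered by roughly $67$ triangulations \emph{for every order type of the three groups}; this is a substantial combinatorial--geometric claim with no argument behind it, and the constant $67$ is reverse-engineered from the target rather than derived. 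The constant $\frac{27}{520}$ never emerges from your construction.

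The paper's decomposition is the mirror image of yours and is what makes the bound provable: Ceder's theorem gives \emph{six} sectors of $\approx n/6$ points each (not $n/6$ groups of six), the Fabila-Monroy--Wood decomposition of the complete $6$-partite graph into $m^3+O(m^2)$ copies of $K_6$ covers all triangles with vertices in three distinct sectors using $8$ colours per copy (via the computer-verified fact that any $6$ points need only $8$ colours), triangles inside one or two adjacent sectors are handled by \emph{induction} on the whole theorem, and the remaining two-sector triangles use a separate Ham--Sandwich lemma costing $\frac{2}{5}m^3$ colours per separated pair. The constant then falls out of the recursion $216c = 8 + 8c + \frac{14}{5}$, i.e.\ $c=\frac{27}{520}$. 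Your setup has no recursive structure and no analogue of the $K_6$-decomposition, so to complete it you would essentially have to invent a different proof of comparable depth for the template step.
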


\section{Proof of Theorem~\ref{thm:ColourOpenTriangles}}

The lower bound in Theorem~\ref{thm:ColourOpenTriangles} follows
immediately from a theorem by \citet{BF-GD84}, who proved that for
every set
$P$ of $n$ points in general position, there is a point
$q$ in the plane such that $q$ is in the interior of at least
$\frac{n^3}{27}+O(n^2)$ triangles determined by $P$. These triangles
form a clique in $G_P$, implying $\chi(G_P)\geq
\omega(G_p)\geq\frac{n^3}{27}+O(n^2)$.  This result is called the
`first selection lemma' by \citet[Section~9.1]{Mat02}. See
\citep{Bukh-EJC06} for an alternative proof and see
\citep{Barany82,FGLNP} for generalisations. 

Note that Boros and F{\"u}redi's theorem is stronger than simply
saying that $\omega(G_p)\geq\frac{n^3}{27}+O(n^2)$. For example, for
sets of $n$ points in convex position, $G_P$ is invariant. Moving the
points around a circle does not change the graph, which is not true
for the question of a point in many triangles. Indeed, \citet{BMN}
proved that there is a set $P$ of $n$ points in convex position, such
that every point in the plane is in the interior of at most
$\frac{n^3}{27}+O(n^2)$ triangles determined by $P$ (thus proving that
the Boros-F\"uredi bound is best possible). However, in this case,
$\omega(G_P)=\frac{n^3}{24}+O(n^2)$ by the result of \citet{Cano}
mentioned above.

It is an interesting open problem whether the lower bound on
$\chi(G_p)$ in Theorem~\ref{thm:ColourOpenTriangles} is tight. That
is, are there infinitely many $n$-point-sets $P$ for which $\chi(G_P)=\frac{n^3}{27}+O(n^2)$?

The proof of the upper bound in Theorem~\ref{thm:ColourOpenTriangles}
depends on the following lemma.

\begin{lemma}
  \label{lem:ColourSeparatedOpenTriangles}
  Let $A$ and $B$ be sets of $n$ points in general position in the
  plane separated by a line. Let $X$ be the set of open triangles that
  are determined by $A\cup B$ and have at least one vertex in each of
  $A$ and $B$.  Then the chromatic number of the intersection graph of
  $X$ is at most $\frac{2}{5}n^3+O(n^2)$
\end{lemma}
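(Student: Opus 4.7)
By the symmetry between $A$ and $B$, it suffices to bound the chromatic number of the subgraph induced by the $AAB$-triangles (those with two vertices in $A$ and one in $B$) by $\tfrac{n^3}{5}+O(n^2)$; applying the same argument with $A$ and $B$ exchanged handles the $ABB$-triangles, and summing the two bounds gives the lemma.

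My setup is to place coordinates so the separating line $\ell$ is horizontal, with $A$ above and $B$ below, and to label $A=\{a_1,\ldots,a_n\}$ and $B=\{b_1,\ldots,b_n\}$ by $x$-coordinate. For each pair $(a_i,b_k)\in A\times B$, let $c_{i,k}=a_ib_k\cap\ell$. Every $AAB$-triangle $\{a_i,a_j,b_k\}$ crosses $\ell$ in the segment $[c_{i,k},c_{j,k}]$. A useful observation is that two $AAB$-triangles sharing the same apex $b_k$ have disjoint interiors precisely when their crossing intervals on $\ell$ are internally disjoint; equivalently, the pairs $\{i,j\}$ and $\{i',j'\}$ are non-crossing chords in the angular order of $A$ as seen from $b_k$. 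This reduces the same-apex part of the coloring to coloring a chord-intersection graph on a convex $n$-gon, which is well understood.

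The construction plan is as follows. There are $\binom{n}{2}n\sim\tfrac{n^3}{2}$ $AAB$-triangles, so to fit into $\tfrac{n^3}{5}+O(n^2)$ colours each colour class must absorb on average $\tfrac{5}{2}$ triangles. I would iterate over a natural index set of size $\sim\tfrac{n^3}{5}$ (e.g.\ triples carrying a $B$-index together with a distinguished pair of $A$-indices) and for each index assemble a compatible family of $AAB$-triangles drawn from possibly different apices so that (i) their crossing intervals on $\ell$ are pairwise internally disjoint, and (ii) their $A$-side pieces (the portions above $\ell$) do not overlap. Condition (i) is local to $\ell$ and purely combinatorial, and condition (ii) is the additional geometric compatibility across different apices; the separation of $A$ and $B$ by $\ell$ is what makes (ii) tractable, because the $A$-side shape of a bipartite triangle is controlled by its apex via the projection onto $\ell$.

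The main obstacle is obtaining the sharp constant $\tfrac{1}{5}$. A straightforward pairing (grouping two compatible triangles per colour) yields at best $\tfrac{n^3}{4}$, while packing three triangles per colour would give $\tfrac{n^3}{6}$; the constant $\tfrac{1}{5}$ lies strictly between these, suggesting a mixed construction with colour classes of sizes $2$ and $3$ combined in a controlled proportion. Verifying that such a mixed packing can be realized globally while respecting condition (ii) will require a careful double-counting that uses the line $\ell$ to decouple the $A$-side and $B$-side interactions of the triangles. I expect the constant $\tfrac{1}{5}$ to emerge from optimizing a single geometric/combinatorial parameter controlling the ratio between the two class sizes, and this optimization is likely the technical crux of the proof.
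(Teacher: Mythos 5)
Your proposal is an outline rather than a proof, and the missing step is precisely the one that carries all the content. You correctly compute that a bound of $\tfrac{1}{5}n^3$ colours for the roughly $\tfrac{1}{2}n^3$ triangles with two vertices in $A$ forces an average of $\tfrac{5}{2}$ triangles per colour class, and you propose to realize this by mixing independent sets of sizes $2$ and $3$ in some proportion to be determined by an optimization you have not carried out. Nothing in the proposal supplies a mechanism for producing such a packing: condition (ii) (disjointness of the $A$-side pieces across different apices) is exactly where the difficulty lives, and "a careful double-counting that uses the line $\ell$ to decouple the two sides" is a placeholder, not an argument. You acknowledge this yourself by calling it the technical crux. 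As a smaller point, your claimed equivalence between internally disjoint crossing intervals on $\ell$ and non-crossing chords in the angular order at $b_k$ is off: nested chords are non-crossing but the corresponding triangles overlap near the common apex.

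More importantly, the numerology you are chasing is a red herring: in the paper the constant $\tfrac{2}{5}$ does not arise as a packing density at all. The paper's proof is a divide-and-conquer: the Ham Sandwich Theorem splits $A$ and $B$ simultaneously into halves of size $m=\ceil{\tfrac{n}{2}}$ by a second line, the triangles with vertices in three distinct quadrants are handled by a decomposition of the complete $4$-partite graph into $m^3+O(m^2)$ copies of $K_4$ (each contributing two colours, via the $2$-colouring of the triangles on four points), and the triangles living in two quadrants are handled by induction, with the two "parallel" quadrant pairs sharing colours. This yields the recursion $c=\tfrac{1}{4}+\tfrac{3}{8}c$, whose solution is $c=\tfrac{2}{5}$. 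The colour classes produced this way mix $AAB$ and $ABB$ triangles and have no uniform size, so the size-$2$/size-$3$ dichotomy you are trying to optimize is not the right frame. To salvage your approach you would need an actual construction achieving density $\tfrac{5}{2}$ for same-type triangles, and you have given no evidence that one exists.
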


\begin{proof}
  We proceed by induction on $n$. It is easily seen that two colours
  suffice for $n\leq 2$.

  If necessary, add a point to $A$ and $B$ so that $|A|=|B|=2m$,
  where $m:=\ceil{\frac{n}{2}}$.  Adding points cannot decrease the
  chromatic number.  By the Ham Sandwich Theorem there is a line
  $\ell$ such that in each open half-plane determined by $\ell$, there
  are exactly $m$ points of $A$ and $m$ points of $B$. Without loss of
  generality, $\ell$ is horizontal.  Let $A_1$ and $A_2$ respectively
  be the subsets of $A$ consisting of points above and below
  $\ell$. Define $B_1$ and $B_2$ analogously.  Thus
  $|A_1|=|A_2|=|B_1|=|B_2|=m$. We call $A_1$, $A_2$, $B_1$ and $B_2$
  \emph{quadrants}.

  Let $G$ be the complete 4-partite graph with colour classes
  $A_1,A_2,B_1,B_2$. \citet{FabilaWood-Decomp} proved that there is a
  set of $m^3+O(m^2)$ copies of $K_4$ in $G$ such that each triangle
  of $G$ appears in some copy. Say $\{a_1,a_2,b_1,b_2\}$ induce such a
  copy of $K_4$, where $a_i\in A_i$ and $b_i\in B_i$. The intersection
  graph of the open triangles determined by any set of four points is
  2-colourable, as illustrated in Figure~\ref{fig:FourPoints}. Thus
  $2m^3+O(m^2)$ colours suffice for the triangles with vertices in
  distinct quadrants.

  For each $i,j\in\{1,2\}$, by induction, $\frac{2}{5}m^3+O(m^2)$
  colours suffice for the triangles in $X$ determined by $A_i\cup
  B_j$. Moreover, the triangles determined by $A_1\cup B_1$ can share
  the same set of colours as the triangles determined by $A_2\cup
  B_2$.  Thus $\frac{6}{5}m^3+O(m^2)$ colours suffice for the
  triangles with vertices in two quadrants.  This accounts for all
  triangles in $X$. The total number of colours is
  $(2+\frac{6}{5})m^3+O(m^2)=\frac{2}{5}n^3+O(n^2)$.
\end{proof}


\begin{proof}[Proof of the Upper Bound in Theorem~\ref{thm:ColourOpenTriangles}] 
  We proceed by induction on $n$.  As shown in
  Section~\ref{sec:Intro}, for $n=3,4,5,6$ every point set $P$
  with $|P|=n$ satisfies $\chi(G_P)=1,2,5,8$ respectively. 
Now assume that $n\geq 7$.

  \citet{Ceder64} proved that there are three concurrent lines that
  divide the plane into six parts each containing at least
  $\frac{n}{6}-1$ points in its interior; also see
  \citep{Bukh-EJC06}. So each part has at most $m:=\frac{n}{6}+5$
  parts. Add points if necessary so that each part contains exactly
  $m$ points. Adding points cannot decrease the chromatic number.  Let
  $P_1,P_2,\dots,P_6$ be the partition of $P$ determined by the six
  parts, in clockwise order about the point of concurrency. Each $P_i$
  is called a \emph{sector}. Let $G$ be the complete 6-partite graph,
  with colour classes $P_1,P_2,\dots,P_6$.

  \citet{FabilaWood-Decomp} proved that there is a set of $m^3+O(m^2)$
  copies of $K_6$ in $G$ such that each triangle appears in some
  copy. Each copy of $K_6$ corresponds to a set of points
  $\{x_1,\dots,x_6\}$ such that each $x_i\in P_i$. The chromatic
  number of the intersection graph of open triangles determined by
  $\{x_1,\dots,x_6\}$ is at most 8; see Appendix~\ref{SixPoints}. Thus
  $8m^3+O(m^2)$ colours suffice for the triangles determined by $P$
  with vertices in distinct sectors.

  For $i,j\in\{1,\dots,6\}$, let $X_{i,j}$ be the set of triangles
  determined by $P_i\cup P_j$ that have at least one endpoint in each
  of $P_i$ and $P_j$.

  By induction, $\frac{27}{520}(2m)^3+O(m^2)$ colours suffice for the
  triangles determined by $P_1\cup P_2$. The same set of colours can
  be used for the triangles determined by $P_3\cup P_4$, and for the
  triangles determined by $P_5\cup P_6$. This accounts for all
  triangles contained in a single sector, as well as $X_{1,2}\cup
  X_{3,4}\cup X_{5,6}$.

  We now colour $X_{i,j}$ for other values of $i,j$.  Note that $P_i$
  and $P_j$ are separated by a line. Thus, by
  Lemma~\ref{lem:ColourSeparatedOpenTriangles},
  $\frac{2}{5}m^3+O(m^2)$ colours suffice for the triangles in
  $X_{i,j}$. Moreover, $X_{2,3}\cup X_{4,5}\cup X_{6,1}$ can use the
  same set of colours, as can $X_{1,5}\cup X_{2,4}$ and $X_{1,3}\cup
  X_{4,6}$ and $X_{3,5}\cup X_{2,6}$.  Each of $X_{1,4}$, $X_{2,5}$
  and $X_{3,6}$ use their own set of colours.  In total the number of
  colours is
   $$8m^3+O(m^2) 
   \,+\, \frac{27}{520}(2m)^3+O(m^2) \,+\, \frac{14}{5}m^3+O(m^2)
   \,=\, \frac{27}{520}n^3+O(n^2)$$
 \end{proof}

\def\cprime{$'$} \def\soft#1{\leavevmode\setbox0=\hbox{h}\dimen7=\ht0\advance
  \dimen7 by-1ex\relax\if t#1\relax\rlap{\raise.6\dimen7
  \hbox{\kern.3ex\char'47}}#1\relax\else\if T#1\relax
  \rlap{\raise.5\dimen7\hbox{\kern1.3ex\char'47}}#1\relax \else\if
  d#1\relax\rlap{\raise.5\dimen7\hbox{\kern.9ex \char'47}}#1\relax\else\if
  D#1\relax\rlap{\raise.5\dimen7 \hbox{\kern1.4ex\char'47}}#1\relax\else\if
  l#1\relax \rlap{\raise.5\dimen7\hbox{\kern.4ex\char'47}}#1\relax \else\if
  L#1\relax\rlap{\raise.5\dimen7\hbox{\kern.7ex
  \char'47}}#1\relax\else\message{accent \string\soft \space #1 not
  defined!}#1\relax\fi\fi\fi\fi\fi\fi}

\appendix
\section{Related Results}
\label{sec:Related}

The following theorem is obtained by combining results by
\citet{BF77,BF-GD84} and \citet{Cano}. In particular,
\citet{BF77,BF-GD84} proved that (A) = (B) = (F) and \citet{Cano}
proved that (E) = (F). We include the proof for completeness. See
\citep{A006918} for other combinatorial objects counted by the same
formula. A \emph{tournament} is an orientation of a complete graph.

\begin{theorem}
  \label{thm:Related}
  The following are equal:
  \begin{enumerate}[(A)]
  \item the maximum number of directed 3-cycles in a tournament on $n$
    vertices,
  \item the maximum number of triangles determined by $n$ points in
    general position with an interior point in common,
  \item the maximum number of triangles determined by $n$ points in
    convex position with an interior point in common,
  \item the clique number of the intersection graph of the open
    triangles determined by $n$ points in convex position,
  \item the chromatic number of the intersection graph of the open
    triangles determined by $n$ points in convex position,
  \item
    \begin{equation*}
      \begin{cases}
        \tfrac{1}{24}\,(n-1)n(n+1)&\text{ if $n$ is odd}\\
        \tfrac{1}{24}\,(n-2)n(n+2)&\text{ if $n$ is even}\enspace.
      \end{cases}
    \end{equation*}
  \end{enumerate}
\end{theorem}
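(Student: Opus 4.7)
The plan is to close a cycle of inequalities among (A)--(F). First I would note the immediate chain $(C) \le (D) \le (E)$: a family of triangles sharing an interior point is pairwise intersecting, hence a clique in the intersection graph, whose order is at most the chromatic number. I would also use $(C) \le (B)$ trivially, since convex position is a special case of general position. Combined with the cited $(A) = (B) = (F)$ of Boros--F\"uredi and $(E) = (F)$ of Cano, it then remains only to prove $(A) = (F)$ explicitly and to exhibit a convex-position configuration witnessing $(C) \ge (F)$; the loop closes as $(F) \le (C) \le (D) \le (E) = (F)$ and $(F) = (A) = (B) \ge (C)$, forcing all six quantities to coincide.

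For $(A) = (F)$, I would use the standard identity that the number of directed 3-cycles in a tournament on $n$ vertices equals $\binom{n}{3} - \sum_v \binom{d^+(v)}{2}$, since every non-cyclic triple has a unique vertex with both intra-triple edges leaving it. The constraint $\sum_v d^+(v) = \binom{n}{2}$ together with the convexity of $\binom{x}{2}$ implies that the sum is minimised when the out-degrees are as balanced as possible: all equal to $(n-1)/2$ for odd $n$ (a regular tournament), or split evenly between $n/2$ and $(n-2)/2$ for even $n$. A direct calculation in each case yields the formula in (F).

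For $(C) \ge (F)$ I would invoke the Boros--F\"uredi tournament trick, specialised to convex position. Given a point $q$ with no line through two points of $P$ passing through $q$, orient the edge $ab$ from $a$ to $b$ iff $q$ lies to the left of the directed line through $a,b$. A triangle $\{a,b,c\}$ has $q$ in its interior iff the induced tournament on $\{a,b,c\}$ is a directed 3-cycle: the counterclockwise traversal of a containing triangle leaves $q$ on the left of each edge, and conversely three consistent half-plane inclusions intersect exactly in the triangle interior. Thus the number of triangles containing $q$ equals the number of directed 3-cycles in the induced tournament. Taking $P$ to be a regular $n$-gon around $q$ realises a regular tournament for odd $n$ and hence attains the optimum in (F); for even $n$, a generic small angular perturbation yields a tournament with the balanced out-degree profile and the same 3-cycle count.

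The main obstacle is the even-$n$ convex realisation: one must check that a small perturbation of the regular $n$-gon (or equivalently, a regular $(n-1)$-gon plus a carefully placed extra point) produces exactly the near-regular optimum and avoids the antipodal degeneracy of the regular even $n$-gon. This is a combinatorial verification on angular positions rather than a deep inequality, but it requires care because not every near-regular tournament arises as a circular tournament. Once this check is in place, the entire chain of equalities closes.
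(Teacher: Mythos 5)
Your reductions among (A), (B), (C), (D) and (F) are sound and essentially the paper's: the charging identity $|X|=\binom{n}{3}-\sum_v\binom{\deg^+(v)}{2}$ plus convexity of $\binom{x}{2}$ gives (A) $\le$ (F); the ``orient $ab$ towards the vertex that puts $q$ on the left'' tournament gives (B) $\le$ (A); and applying it to a perturbed regular $n$-gon gives (C) $\ge$ (F). Your flagged ``main obstacle'' for even $n$ is in fact not one: once $q$ is generic, every out-degree is $\frac{n-2}{2}$ or $\frac{n}{2}$, and $\sum_v\deg^+(v)=\binom{n}{2}$ alone forces exactly half of each, so the balanced profile is automatic. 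The genuine gap is the upper bound on (E). Every link you actually prove bounds (E) from \emph{below} (via (C) $\le$ (D) $\le$ (E)); the only thing bounding it from above in your outline is the sentence ``(E) $=$ (F) of Cano'', which is a citation, not an argument. Since (E) is a chromatic number, closing the cycle requires exhibiting a colouring of the intersection graph with only (F) colours, and no such construction appears anywhere in your proposal.

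That construction is the substantive content of the paper's proof of (E) $\le$ (D). Take $n$ evenly spaced points on a circle and a point $q$ at or near the centre chosen to avoid degeneracies; call a triangle \emph{central} if it contains $q$. To each central triangle $uvw$ the paper assigns an independent set, essentially $\{uvw,\,uvw',\,uv'w,\,u'vw\}$ where $v\mapsto v'$ is the (near-)antipodal map, with separate smaller sets for central triangles containing an antipodal pair ($n$ even) or two consecutive vertices ($n$ odd). One then verifies that every non-central triangle $uvw$, say with $vw$ separating $u$ from $q$, lies in the set belonging to the central triangle $u'vw$ (or $xvw$ with $x'=u$ in the odd case), so these sets form a proper colouring with one colour per central triangle, proving $\chi=\omega$ for convex position. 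Without this step, or some substitute for it, you have established (A) $=$ (B) $=$ (C) $=$ (F) $\le$ (D) $\le$ (E) but not the theorem.
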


\newcommand{\AllBold}[1]{\textbf{\boldmath #1}}

\begin{proof}
  \AllBold{(A) $\leq$ (F):} (This is an exercise in
  \citep[page~33]{Anderson89}.)\ Let $G$ be a tournament on $n$
  vertices. Let $X$ be the set of directed 3-cycles in $G$. For each
  triple $\{u,v,w\}$ of vertices not in $X$, exactly one of $u,v,w$,
  say $u$, has outdegree $2$ in $G[\{u,v,w\}]$. Charge this triple to
  $u$.  Exactly $\binom{\deg^+(u)}{2}$ such triples are charged to
  $u$.  Thus the number of triples not in $X$ equals
  $\sum_u\binom{\deg^+(u)}{2}$.  Hence
  \begin{equation}
    \label{Xequals}
    |X|=\binom{n}{3}-\sum_u\binom{\deg^+(u)}{2}\enspace,
  \end{equation}
  which is maximised when the outdegrees are as equal as possible
  (subject to $\sum_u\deg^+(u)=\binom{n}{2}$). Thus when $n$ is odd,
  $|X|$ is maximised when every vertex has outdegree
  $\frac{n-1}{2}$. Hence $|X|\leq\binom{n}{3}-n\binom{(n-1)/2}{2}=
  \tfrac{1}{24}\,(n-1)n(n+1)$. When $n$ is even, $|X|$ is maximised
  when half the vertices have outdegree $\frac{n-2}{2}$ and the other
  half have outdegree $\frac{n}{2}$. Hence
  $|X|\leq\binom{n}{3}-\frac{n}{2}\binom{(n-2)/2}{2}-\frac{n}{2}\binom{n/2}{2}=
  \tfrac{1}{24}\,(n-2)n(n+2)$.

  \medskip\AllBold{(B) $\leq$ (A):} Let $P$ be a set of $n$ points in
  general position.  Let $X$ be a set of triangles determined by $P$
  that contain a common interior point $q$. Let $G$ be the $n$-vertex
  tournament with vertex set $P$, where the edge $vw$ is directed from
  $v$ to $w$ whenever $w$ is clockwise from $v$ in the triangle
  $vwq$. if $vwq$ are collinear then orient $vw$ arbitrarily in $G$.
  A triangle in $X$ is a directed 3-cycle in $G$.  Thus $|X|$ is at
  most the maximum number of directed 3-cycles in an $n$-vertex
  tournament.

  \medskip \AllBold{(C) $\leq$ (B):} This follows immediately from the
  definitions.

  \medskip\AllBold{(C) $\leq$ (D):} If $P$ is a set of points, and $X$
  is a set of triangles determined by $P$ with an interior point in
  common, then $X$ is a clique in $G_P$. Thus (D) $\geq$ (C).

  \medskip\AllBold{(D) $\leq$ (E):} The chromatic number of every
  graph is at least its clique number.

  \medskip\AllBold{(E) $\leq$ (D):} For sets $P$ of $n$ points in
  convex position, $G_P$ does not depend on the particular choice of
  $P$. Thus we may assume that $P$ consists of $n$ equally spaced
  points around a circle. Below we define a specific point $q$ at or
  near the centre of the circle. Say a triangle determined by $P$ is
  \emph{central} if it contains $q$ in its interior. Thus the set of
  central triangles are a clique in $G_P$. For each central triangle
  $uvw$, we define an independent set of triangles (including $uvw$)
  that is said to \emph{belong} to $uvw$. We prove that each triangle
  is in an independent set belonging to some central triangle. Thus
  these independent sets define a colouring of $G_P$, with one colour
  for each central triangle.

  First suppose that $n$ is even. For each point $v\in P$, let $v'$ be
  the point on the circle antipodal to $v$. Since $n$ is even, $v'\in
  P$. A triangle determined by $P$ is \emph{long} if it contains an
  antipodal pair of vertices.  Let $q$ be a point near the centre of
  the circle, such that for all consecutive points $v,w\in P$, exactly
  one of the long triangles $vv'w$ and $vv'w'$ contain $q$ in their
  interior.  If $uvw$ is a non-long central triangle, then each of
  $uvw'$, $uv'w$ and $u'vw$ is not central, and
  $\{uvw,uvw',uv'w,u'vw\}$ is the independent set that belongs to
  $uvw$.  If $vv'w$ is a long central triangle, then $vv'w'$ is not
  central, and $\{vv'w,vv'w'\}$ is the independent set that belongs to
  $vv'w$.  We claim that every triangle determined by $P$ is in an
  independent set that belongs to a central triangle. Let $uvw$ be a
  non-central triangle. Without loss of generality, $vw$ separates $u$
  from $q$, implying $u'vw$ is a central triangle, and $uvw$ is in the
  independent set that belongs to $u'vw$ (regardless of whether $u'vw$
  is long), as claimed.

  Now assume that $n$ is odd. For each point $v\in P$, let $v'$ be the
  point in $P$ immediately clockwise from the point on the circle
  antipodal to $v$ (which is not in $P$ since $n$ is odd).  Let $q$ be
  the centre of the circle.  If $uvw$ is a central triangle, and no
  two of $u,v,w$ are consecutive around the circle, then each of
  $uvw'$, $uv'w$ and $u'vw$ is not central, and
  $\{uvw,uvw',uv'w,u'vw\}$ is the independent set in $G_P$ that
  belongs to $uvw$.  If $uvw$ is a central triangle, and $u$ and $v$
  are consecutive, then $uv'w$ and $u'vw$ are not central, and
  $\{uvw,uv'w,u'vw\}$ is the independent set in $G_P$ that belongs to
  $uvw$.  We claim that every triangle determined by $P$ is in an
  independent set that belongs to a central triangle. Let $uvw$ be a
  non-central triangle. Without loss of generality, $vw$ separates $u$
  from $q$. Let $x$ be the vertex immediately anticlockwise from
  $u'$. Then $xvw$ is a central triangle, and $x'=u$. Thus $uvw$ is in
  the independent set that belongs to $xvw$, as claimed.

  Since there is one colour for each central triangle in the above
  colouring, the set of central triangles are a maximum clique in
  $G_P$, and $\chi(G_P)=\omega(G_P)$. That is, (D) = (E).

  \medskip\AllBold{(F) $\leq$ (C):} Let $P$ be $n$ evenly spaced
  points on a circle.  Let $q$ be the point near the centre of the
  circle defined in the proof that (E) $\leq$ (D). Let $X$ be the set
  of triangles determined by $P$ that contain $q$ in their
  interior. Thus (C) $\geq |X|$. Let $G$ be the $n$-vertex tournament
  with vertex set $P$, where the edge $vw$ is directed from $v$ to $w$
  whenever $w$ is clockwise from $v$ in the triangle $vwq$. Observe
  that if $n$ is odd, then every vertex in $G$ has outdegree
  $\frac{n-1}{2}$. And if $n$ is even, then half the vertices in $G$
  have outdegree $\frac{n-2}{2}$ and the other half have outdegree
  $\frac{n}{2}$. The analysis in the proof that (A) $\leq$ (F) shows that $|X|=$ (F). Hence (C) $\geq$ (F).

  \medskip\AllBold{(E) $\leq$ (A):} Let $P$ be $n$ evenly spaced
  points on a circle.  Let $q$ be the point near the centre of the
  circle defined in the proof that (E) $\leq$ (D). Let $G$ be the
  $n$-vertex tournament with vertex set $P$, where the edge $vw$ is
  directed from $v$ to $w$ whenever $w$ is clockwise from $v$ in the
  triangle $vwq$. Three vertices form a directed 3-cycle in $G$ if and
  only if they form a central triangle. Thus (A) is at least the
  number of central triangles, which equals (E) by the proof that (E)
  $\leq$ (D).

  \medskip We have proved that (F) $\leq$ (C) $\leq$ (B) $\leq$ (A)
  $\leq$ (F) and (F) $\leq$ (C) $\leq$ (D) $\leq$ (E) $\leq$ (A)
  $\leq$ (F).  Thus (A) = (B) = (C) = (D) = (E) = (F).
\end{proof}




We conjecture that the maximum clique number of the intersection graph
of the open triangles determined by $n$ points in general position
also equals the number in Theorem~\ref{thm:Related}, as does the
maximum chromatic number. It may even be true that
$\chi(G_P)=\omega(G_P)$ for every set $P$ of points in general
position. We have verified by computer that $\chi(G_P)=\omega(G_P)$ for every set
$P$ of at most 7 points in general position.

\newpage
\section{8-Colouring the Triangles Determined by 6 Points}
\label{SixPoints}
\definecolor{MidRed}{rgb}{1,0.4,0.4}
\definecolor{LightBlue}{rgb}{0.8,0.8,1}

\psset{unit=0.11mm}

\newcommand{\NewOrderType}{
\medskip

\noindent
\hrulefill

\medskip
\noindent}

\newcommand{\drawing}[1]{
\begin{minipage}{37mm}
\begin{pspicture}(330,300)
#1
\end{pspicture}
\end{minipage}}

\noindent
\drawing{
\psline[linecolor=lightgray,linewidth=1pt](30,160)(126,220)
\psline[linecolor=lightgray,linewidth=1pt](126,220)(230,192)
\psline[linecolor=lightgray,linewidth=1pt](230,192)(248,78)
\psline[linecolor=lightgray,linewidth=1pt](30,160)(54,72)
\psline[linecolor=lightgray,linewidth=1pt](54,72)(148,36)
\psline[linecolor=lightgray,linewidth=1pt](148,36)(248,78)
\pspolygon[fillstyle=solid,fillcolor=LightBlue,linecolor=black](148,36)(54,72)(126,220)
\pspolygon[fillstyle=solid,fillcolor=LightBlue,linecolor=black](248,78)(148,36)(126,220)
\pspolygon[fillstyle=solid,fillcolor=LightBlue,linecolor=black](54,72)(30,160)(126,220)
\pspolygon[fillstyle=solid,fillcolor=LightBlue,linecolor=black](230,192)(248,78)(126,220)
\pscircle[fillstyle=solid,fillcolor=MidRed,linecolor=black](30,160){6}
\pscircle[fillstyle=solid,fillcolor=MidRed,linecolor=black](54,72){6}
\pscircle[fillstyle=solid,fillcolor=MidRed,linecolor=black](126,220){6}
\pscircle[fillstyle=solid,fillcolor=MidRed,linecolor=black](148,36){6}
\pscircle[fillstyle=solid,fillcolor=MidRed,linecolor=black](230,192){6}
\pscircle[fillstyle=solid,fillcolor=MidRed,linecolor=black](248,78){6}
}
\drawing{
\psline[linecolor=lightgray,linewidth=1pt](30,160)(126,220)
\psline[linecolor=lightgray,linewidth=1pt](126,220)(230,192)
\psline[linecolor=lightgray,linewidth=1pt](230,192)(248,78)
\psline[linecolor=lightgray,linewidth=1pt](30,160)(54,72)
\psline[linecolor=lightgray,linewidth=1pt](54,72)(148,36)
\psline[linecolor=lightgray,linewidth=1pt](148,36)(248,78)
\pspolygon[fillstyle=solid,fillcolor=LightBlue,linecolor=black](248,78)(54,72)(126,220)
\pscircle[fillstyle=solid,fillcolor=MidRed,linecolor=black](30,160){6}
\pscircle[fillstyle=solid,fillcolor=MidRed,linecolor=black](54,72){6}
\pscircle[fillstyle=solid,fillcolor=MidRed,linecolor=black](126,220){6}
\pscircle[fillstyle=solid,fillcolor=MidRed,linecolor=black](148,36){6}
\pscircle[fillstyle=solid,fillcolor=MidRed,linecolor=black](230,192){6}
\pscircle[fillstyle=solid,fillcolor=MidRed,linecolor=black](248,78){6}
}
\drawing{
\psline[linecolor=lightgray,linewidth=1pt](30,160)(126,220)
\psline[linecolor=lightgray,linewidth=1pt](126,220)(230,192)
\psline[linecolor=lightgray,linewidth=1pt](230,192)(248,78)
\psline[linecolor=lightgray,linewidth=1pt](30,160)(54,72)
\psline[linecolor=lightgray,linewidth=1pt](54,72)(148,36)
\psline[linecolor=lightgray,linewidth=1pt](148,36)(248,78)
\pspolygon[fillstyle=solid,fillcolor=LightBlue,linecolor=black](248,78)(30,160)(126,220)
\pspolygon[fillstyle=solid,fillcolor=LightBlue,linecolor=black](248,78)(54,72)(30,160)
\pscircle[fillstyle=solid,fillcolor=MidRed,linecolor=black](30,160){6}
\pscircle[fillstyle=solid,fillcolor=MidRed,linecolor=black](54,72){6}
\pscircle[fillstyle=solid,fillcolor=MidRed,linecolor=black](126,220){6}
\pscircle[fillstyle=solid,fillcolor=MidRed,linecolor=black](148,36){6}
\pscircle[fillstyle=solid,fillcolor=MidRed,linecolor=black](230,192){6}
\pscircle[fillstyle=solid,fillcolor=MidRed,linecolor=black](248,78){6}
}
\drawing{
\psline[linecolor=lightgray,linewidth=1pt](30,160)(126,220)
\psline[linecolor=lightgray,linewidth=1pt](126,220)(230,192)
\psline[linecolor=lightgray,linewidth=1pt](230,192)(248,78)
\psline[linecolor=lightgray,linewidth=1pt](30,160)(54,72)
\psline[linecolor=lightgray,linewidth=1pt](54,72)(148,36)
\psline[linecolor=lightgray,linewidth=1pt](148,36)(248,78)
\pspolygon[fillstyle=solid,fillcolor=LightBlue,linecolor=black](230,192)(248,78)(30,160)
\pspolygon[fillstyle=solid,fillcolor=LightBlue,linecolor=black](248,78)(148,36)(30,160)
\pscircle[fillstyle=solid,fillcolor=MidRed,linecolor=black](30,160){6}
\pscircle[fillstyle=solid,fillcolor=MidRed,linecolor=black](54,72){6}
\pscircle[fillstyle=solid,fillcolor=MidRed,linecolor=black](126,220){6}
\pscircle[fillstyle=solid,fillcolor=MidRed,linecolor=black](148,36){6}
\pscircle[fillstyle=solid,fillcolor=MidRed,linecolor=black](230,192){6}
\pscircle[fillstyle=solid,fillcolor=MidRed,linecolor=black](248,78){6}
}
\drawing{
\psline[linecolor=lightgray,linewidth=1pt](30,160)(126,220)
\psline[linecolor=lightgray,linewidth=1pt](126,220)(230,192)
\psline[linecolor=lightgray,linewidth=1pt](230,192)(248,78)
\psline[linecolor=lightgray,linewidth=1pt](30,160)(54,72)
\psline[linecolor=lightgray,linewidth=1pt](54,72)(148,36)
\psline[linecolor=lightgray,linewidth=1pt](148,36)(248,78)
\pspolygon[fillstyle=solid,fillcolor=LightBlue,linecolor=black](230,192)(148,36)(54,72)
\pspolygon[fillstyle=solid,fillcolor=LightBlue,linecolor=black](230,192)(54,72)(126,220)
\pscircle[fillstyle=solid,fillcolor=MidRed,linecolor=black](30,160){6}
\pscircle[fillstyle=solid,fillcolor=MidRed,linecolor=black](54,72){6}
\pscircle[fillstyle=solid,fillcolor=MidRed,linecolor=black](126,220){6}
\pscircle[fillstyle=solid,fillcolor=MidRed,linecolor=black](148,36){6}
\pscircle[fillstyle=solid,fillcolor=MidRed,linecolor=black](230,192){6}
\pscircle[fillstyle=solid,fillcolor=MidRed,linecolor=black](248,78){6}
}
\drawing{
\psline[linecolor=lightgray,linewidth=1pt](30,160)(126,220)
\psline[linecolor=lightgray,linewidth=1pt](126,220)(230,192)
\psline[linecolor=lightgray,linewidth=1pt](230,192)(248,78)
\psline[linecolor=lightgray,linewidth=1pt](30,160)(54,72)
\psline[linecolor=lightgray,linewidth=1pt](54,72)(148,36)
\psline[linecolor=lightgray,linewidth=1pt](148,36)(248,78)
\pspolygon[fillstyle=solid,fillcolor=LightBlue,linecolor=black](230,192)(248,78)(54,72)
\pspolygon[fillstyle=solid,fillcolor=LightBlue,linecolor=black](230,192)(54,72)(30,160)
\pspolygon[fillstyle=solid,fillcolor=LightBlue,linecolor=black](248,78)(148,36)(54,72)
\pscircle[fillstyle=solid,fillcolor=MidRed,linecolor=black](30,160){6}
\pscircle[fillstyle=solid,fillcolor=MidRed,linecolor=black](54,72){6}
\pscircle[fillstyle=solid,fillcolor=MidRed,linecolor=black](126,220){6}
\pscircle[fillstyle=solid,fillcolor=MidRed,linecolor=black](148,36){6}
\pscircle[fillstyle=solid,fillcolor=MidRed,linecolor=black](230,192){6}
\pscircle[fillstyle=solid,fillcolor=MidRed,linecolor=black](248,78){6}
}
\drawing{
\psline[linecolor=lightgray,linewidth=1pt](30,160)(126,220)
\psline[linecolor=lightgray,linewidth=1pt](126,220)(230,192)
\psline[linecolor=lightgray,linewidth=1pt](230,192)(248,78)
\psline[linecolor=lightgray,linewidth=1pt](30,160)(54,72)
\psline[linecolor=lightgray,linewidth=1pt](54,72)(148,36)
\psline[linecolor=lightgray,linewidth=1pt](148,36)(248,78)
\pspolygon[fillstyle=solid,fillcolor=LightBlue,linecolor=black](230,192)(148,36)(30,160)
\pspolygon[fillstyle=solid,fillcolor=LightBlue,linecolor=black](230,192)(30,160)(126,220)
\pscircle[fillstyle=solid,fillcolor=MidRed,linecolor=black](30,160){6}
\pscircle[fillstyle=solid,fillcolor=MidRed,linecolor=black](54,72){6}
\pscircle[fillstyle=solid,fillcolor=MidRed,linecolor=black](126,220){6}
\pscircle[fillstyle=solid,fillcolor=MidRed,linecolor=black](148,36){6}
\pscircle[fillstyle=solid,fillcolor=MidRed,linecolor=black](230,192){6}
\pscircle[fillstyle=solid,fillcolor=MidRed,linecolor=black](248,78){6}
}
\drawing{
\psline[linecolor=lightgray,linewidth=1pt](30,160)(126,220)
\psline[linecolor=lightgray,linewidth=1pt](126,220)(230,192)
\psline[linecolor=lightgray,linewidth=1pt](230,192)(248,78)
\psline[linecolor=lightgray,linewidth=1pt](30,160)(54,72)
\psline[linecolor=lightgray,linewidth=1pt](54,72)(148,36)
\psline[linecolor=lightgray,linewidth=1pt](148,36)(248,78)
\pspolygon[fillstyle=solid,fillcolor=LightBlue,linecolor=black](148,36)(30,160)(126,220)
\pspolygon[fillstyle=solid,fillcolor=LightBlue,linecolor=black](230,192)(148,36)(126,220)
\pspolygon[fillstyle=solid,fillcolor=LightBlue,linecolor=black](148,36)(54,72)(30,160)
\pspolygon[fillstyle=solid,fillcolor=LightBlue,linecolor=black](230,192)(248,78)(148,36)
\pscircle[fillstyle=solid,fillcolor=MidRed,linecolor=black](30,160){6}
\pscircle[fillstyle=solid,fillcolor=MidRed,linecolor=black](54,72){6}
\pscircle[fillstyle=solid,fillcolor=MidRed,linecolor=black](126,220){6}
\pscircle[fillstyle=solid,fillcolor=MidRed,linecolor=black](148,36){6}
\pscircle[fillstyle=solid,fillcolor=MidRed,linecolor=black](230,192){6}
\pscircle[fillstyle=solid,fillcolor=MidRed,linecolor=black](248,78){6}
}
\NewOrderType
\drawing{
\psline[linecolor=lightgray,linewidth=1pt](11,37)(76,255)
\psline[linecolor=lightgray,linewidth=1pt](76,255)(244,198)
\psline[linecolor=lightgray,linewidth=1pt](11,37)(84,4)
\psline[linecolor=lightgray,linewidth=1pt](84,4)(169,32)
\psline[linecolor=lightgray,linewidth=1pt](169,32)(244,198)
\pspolygon[fillstyle=solid,fillcolor=LightBlue,linecolor=black](11,37)(76,255)(244,198)
\pspolygon[fillstyle=solid,fillcolor=LightBlue,linecolor=black](84,4)(11,37)(244,198)
\pspolygon[fillstyle=solid,fillcolor=LightBlue,linecolor=black](169,32)(84,4)(244,198)
\pscircle[fillstyle=solid,fillcolor=MidRed,linecolor=black](11,37){6}
\pscircle[fillstyle=solid,fillcolor=MidRed,linecolor=black](76,255){6}
\pscircle[fillstyle=solid,fillcolor=MidRed,linecolor=black](84,4){6}
\pscircle[fillstyle=solid,fillcolor=MidRed,linecolor=black](147,205){6}
\pscircle[fillstyle=solid,fillcolor=MidRed,linecolor=black](169,32){6}
\pscircle[fillstyle=solid,fillcolor=MidRed,linecolor=black](244,198){6}
}
\drawing{
\psline[linecolor=lightgray,linewidth=1pt](11,37)(76,255)
\psline[linecolor=lightgray,linewidth=1pt](76,255)(244,198)
\psline[linecolor=lightgray,linewidth=1pt](11,37)(84,4)
\psline[linecolor=lightgray,linewidth=1pt](84,4)(169,32)
\psline[linecolor=lightgray,linewidth=1pt](169,32)(244,198)
\pspolygon[fillstyle=solid,fillcolor=LightBlue,linecolor=black](84,4)(76,255)(244,198)
\pscircle[fillstyle=solid,fillcolor=MidRed,linecolor=black](11,37){6}
\pscircle[fillstyle=solid,fillcolor=MidRed,linecolor=black](76,255){6}
\pscircle[fillstyle=solid,fillcolor=MidRed,linecolor=black](84,4){6}
\pscircle[fillstyle=solid,fillcolor=MidRed,linecolor=black](147,205){6}
\pscircle[fillstyle=solid,fillcolor=MidRed,linecolor=black](169,32){6}
\pscircle[fillstyle=solid,fillcolor=MidRed,linecolor=black](244,198){6}
}
\drawing{
\psline[linecolor=lightgray,linewidth=1pt](11,37)(76,255)
\psline[linecolor=lightgray,linewidth=1pt](76,255)(244,198)
\psline[linecolor=lightgray,linewidth=1pt](11,37)(84,4)
\psline[linecolor=lightgray,linewidth=1pt](84,4)(169,32)
\psline[linecolor=lightgray,linewidth=1pt](169,32)(244,198)
\pspolygon[fillstyle=solid,fillcolor=LightBlue,linecolor=black](84,4)(147,205)(244,198)
\pspolygon[fillstyle=solid,fillcolor=LightBlue,linecolor=black](84,4)(76,255)(147,205)
\pscircle[fillstyle=solid,fillcolor=MidRed,linecolor=black](11,37){6}
\pscircle[fillstyle=solid,fillcolor=MidRed,linecolor=black](76,255){6}
\pscircle[fillstyle=solid,fillcolor=MidRed,linecolor=black](84,4){6}
\pscircle[fillstyle=solid,fillcolor=MidRed,linecolor=black](147,205){6}
\pscircle[fillstyle=solid,fillcolor=MidRed,linecolor=black](169,32){6}
\pscircle[fillstyle=solid,fillcolor=MidRed,linecolor=black](244,198){6}
}
\drawing{
\psline[linecolor=lightgray,linewidth=1pt](11,37)(76,255)
\psline[linecolor=lightgray,linewidth=1pt](76,255)(244,198)
\psline[linecolor=lightgray,linewidth=1pt](11,37)(84,4)
\psline[linecolor=lightgray,linewidth=1pt](84,4)(169,32)
\psline[linecolor=lightgray,linewidth=1pt](169,32)(244,198)
\pspolygon[fillstyle=solid,fillcolor=LightBlue,linecolor=black](169,32)(76,255)(244,198)
\pspolygon[fillstyle=solid,fillcolor=LightBlue,linecolor=black](169,32)(84,4)(76,255)
\pspolygon[fillstyle=solid,fillcolor=LightBlue,linecolor=black](84,4)(11,37)(76,255)
\pscircle[fillstyle=solid,fillcolor=MidRed,linecolor=black](11,37){6}
\pscircle[fillstyle=solid,fillcolor=MidRed,linecolor=black](76,255){6}
\pscircle[fillstyle=solid,fillcolor=MidRed,linecolor=black](84,4){6}
\pscircle[fillstyle=solid,fillcolor=MidRed,linecolor=black](147,205){6}
\pscircle[fillstyle=solid,fillcolor=MidRed,linecolor=black](169,32){6}
\pscircle[fillstyle=solid,fillcolor=MidRed,linecolor=black](244,198){6}
}
\drawing{
\psline[linecolor=lightgray,linewidth=1pt](11,37)(76,255)
\psline[linecolor=lightgray,linewidth=1pt](76,255)(244,198)
\psline[linecolor=lightgray,linewidth=1pt](11,37)(84,4)
\psline[linecolor=lightgray,linewidth=1pt](84,4)(169,32)
\psline[linecolor=lightgray,linewidth=1pt](169,32)(244,198)
\pspolygon[fillstyle=solid,fillcolor=LightBlue,linecolor=black](169,32)(11,37)(76,255)
\pspolygon[fillstyle=solid,fillcolor=LightBlue,linecolor=black](169,32)(76,255)(147,205)
\pscircle[fillstyle=solid,fillcolor=MidRed,linecolor=black](11,37){6}
\pscircle[fillstyle=solid,fillcolor=MidRed,linecolor=black](76,255){6}
\pscircle[fillstyle=solid,fillcolor=MidRed,linecolor=black](84,4){6}
\pscircle[fillstyle=solid,fillcolor=MidRed,linecolor=black](147,205){6}
\pscircle[fillstyle=solid,fillcolor=MidRed,linecolor=black](169,32){6}
\pscircle[fillstyle=solid,fillcolor=MidRed,linecolor=black](244,198){6}
}
\drawing{
\psline[linecolor=lightgray,linewidth=1pt](11,37)(76,255)
\psline[linecolor=lightgray,linewidth=1pt](76,255)(244,198)
\psline[linecolor=lightgray,linewidth=1pt](11,37)(84,4)
\psline[linecolor=lightgray,linewidth=1pt](84,4)(169,32)
\psline[linecolor=lightgray,linewidth=1pt](169,32)(244,198)
\pspolygon[fillstyle=solid,fillcolor=LightBlue,linecolor=black](169,32)(84,4)(147,205)
\pspolygon[fillstyle=solid,fillcolor=LightBlue,linecolor=black](84,4)(11,37)(147,205)
\pscircle[fillstyle=solid,fillcolor=MidRed,linecolor=black](11,37){6}
\pscircle[fillstyle=solid,fillcolor=MidRed,linecolor=black](76,255){6}
\pscircle[fillstyle=solid,fillcolor=MidRed,linecolor=black](84,4){6}
\pscircle[fillstyle=solid,fillcolor=MidRed,linecolor=black](147,205){6}
\pscircle[fillstyle=solid,fillcolor=MidRed,linecolor=black](169,32){6}
\pscircle[fillstyle=solid,fillcolor=MidRed,linecolor=black](244,198){6}
}
\drawing{
\psline[linecolor=lightgray,linewidth=1pt](11,37)(76,255)
\psline[linecolor=lightgray,linewidth=1pt](76,255)(244,198)
\psline[linecolor=lightgray,linewidth=1pt](11,37)(84,4)
\psline[linecolor=lightgray,linewidth=1pt](84,4)(169,32)
\psline[linecolor=lightgray,linewidth=1pt](169,32)(244,198)
\pspolygon[fillstyle=solid,fillcolor=LightBlue,linecolor=black](169,32)(11,37)(147,205)
\pspolygon[fillstyle=solid,fillcolor=LightBlue,linecolor=black](169,32)(147,205)(244,198)
\pscircle[fillstyle=solid,fillcolor=MidRed,linecolor=black](11,37){6}
\pscircle[fillstyle=solid,fillcolor=MidRed,linecolor=black](76,255){6}
\pscircle[fillstyle=solid,fillcolor=MidRed,linecolor=black](84,4){6}
\pscircle[fillstyle=solid,fillcolor=MidRed,linecolor=black](147,205){6}
\pscircle[fillstyle=solid,fillcolor=MidRed,linecolor=black](169,32){6}
\pscircle[fillstyle=solid,fillcolor=MidRed,linecolor=black](244,198){6}
}
\drawing{
\psline[linecolor=lightgray,linewidth=1pt](11,37)(76,255)
\psline[linecolor=lightgray,linewidth=1pt](76,255)(244,198)
\psline[linecolor=lightgray,linewidth=1pt](11,37)(84,4)
\psline[linecolor=lightgray,linewidth=1pt](84,4)(169,32)
\psline[linecolor=lightgray,linewidth=1pt](169,32)(244,198)
\pspolygon[fillstyle=solid,fillcolor=LightBlue,linecolor=black](11,37)(147,205)(244,198)
\pspolygon[fillstyle=solid,fillcolor=LightBlue,linecolor=black](169,32)(11,37)(244,198)
\pspolygon[fillstyle=solid,fillcolor=LightBlue,linecolor=black](11,37)(76,255)(147,205)
\pspolygon[fillstyle=solid,fillcolor=LightBlue,linecolor=black](76,255)(147,205)(244,198)
\pspolygon[fillstyle=solid,fillcolor=LightBlue,linecolor=black](169,32)(84,4)(11,37)
\pscircle[fillstyle=solid,fillcolor=MidRed,linecolor=black](11,37){6}
\pscircle[fillstyle=solid,fillcolor=MidRed,linecolor=black](76,255){6}
\pscircle[fillstyle=solid,fillcolor=MidRed,linecolor=black](84,4){6}
\pscircle[fillstyle=solid,fillcolor=MidRed,linecolor=black](147,205){6}
\pscircle[fillstyle=solid,fillcolor=MidRed,linecolor=black](169,32){6}
\pscircle[fillstyle=solid,fillcolor=MidRed,linecolor=black](244,198){6}
}
\NewOrderType
\drawing{
\psline[linecolor=lightgray,linewidth=1pt](7,145)(113,247)
\psline[linecolor=lightgray,linewidth=1pt](113,247)(177,236)
\psline[linecolor=lightgray,linewidth=1pt](177,236)(249,57)
\psline[linecolor=lightgray,linewidth=1pt](7,145)(208,8)
\psline[linecolor=lightgray,linewidth=1pt](208,8)(249,57)
\pspolygon[fillstyle=solid,fillcolor=LightBlue,linecolor=black](113,247)(177,236)(249,57)
\pspolygon[fillstyle=solid,fillcolor=LightBlue,linecolor=black](113,247)(249,57)(208,8)
\pspolygon[fillstyle=solid,fillcolor=LightBlue,linecolor=black](113,247)(208,8)(93,161)
\pspolygon[fillstyle=solid,fillcolor=LightBlue,linecolor=black](113,247)(93,161)(7,145)
\pspolygon[fillstyle=solid,fillcolor=LightBlue,linecolor=black](208,8)(93,161)(7,145)
\pscircle[fillstyle=solid,fillcolor=MidRed,linecolor=black](7,145){6}
\pscircle[fillstyle=solid,fillcolor=MidRed,linecolor=black](93,161){6}
\pscircle[fillstyle=solid,fillcolor=MidRed,linecolor=black](113,247){6}
\pscircle[fillstyle=solid,fillcolor=MidRed,linecolor=black](177,236){6}
\pscircle[fillstyle=solid,fillcolor=MidRed,linecolor=black](208,8){6}
\pscircle[fillstyle=solid,fillcolor=MidRed,linecolor=black](249,57){6}
}
\drawing{
\psline[linecolor=lightgray,linewidth=1pt](7,145)(113,247)
\psline[linecolor=lightgray,linewidth=1pt](113,247)(177,236)
\psline[linecolor=lightgray,linewidth=1pt](177,236)(249,57)
\psline[linecolor=lightgray,linewidth=1pt](7,145)(208,8)
\psline[linecolor=lightgray,linewidth=1pt](208,8)(249,57)
\pspolygon[fillstyle=solid,fillcolor=LightBlue,linecolor=black](177,236)(93,161)(7,145)
\pspolygon[fillstyle=solid,fillcolor=LightBlue,linecolor=black](177,236)(249,57)(93,161)
\pspolygon[fillstyle=solid,fillcolor=LightBlue,linecolor=black](249,57)(93,161)(7,145)
\pscircle[fillstyle=solid,fillcolor=MidRed,linecolor=black](7,145){6}
\pscircle[fillstyle=solid,fillcolor=MidRed,linecolor=black](93,161){6}
\pscircle[fillstyle=solid,fillcolor=MidRed,linecolor=black](113,247){6}
\pscircle[fillstyle=solid,fillcolor=MidRed,linecolor=black](177,236){6}
\pscircle[fillstyle=solid,fillcolor=MidRed,linecolor=black](208,8){6}
\pscircle[fillstyle=solid,fillcolor=MidRed,linecolor=black](249,57){6}
}
\drawing{
\psline[linecolor=lightgray,linewidth=1pt](7,145)(113,247)
\psline[linecolor=lightgray,linewidth=1pt](113,247)(177,236)
\psline[linecolor=lightgray,linewidth=1pt](177,236)(249,57)
\psline[linecolor=lightgray,linewidth=1pt](7,145)(208,8)
\psline[linecolor=lightgray,linewidth=1pt](208,8)(249,57)
\pspolygon[fillstyle=solid,fillcolor=LightBlue,linecolor=black](177,236)(249,57)(7,145)
\pscircle[fillstyle=solid,fillcolor=MidRed,linecolor=black](7,145){6}
\pscircle[fillstyle=solid,fillcolor=MidRed,linecolor=black](93,161){6}
\pscircle[fillstyle=solid,fillcolor=MidRed,linecolor=black](113,247){6}
\pscircle[fillstyle=solid,fillcolor=MidRed,linecolor=black](177,236){6}
\pscircle[fillstyle=solid,fillcolor=MidRed,linecolor=black](208,8){6}
\pscircle[fillstyle=solid,fillcolor=MidRed,linecolor=black](249,57){6}
}
\drawing{
\psline[linecolor=lightgray,linewidth=1pt](7,145)(113,247)
\psline[linecolor=lightgray,linewidth=1pt](113,247)(177,236)
\psline[linecolor=lightgray,linewidth=1pt](177,236)(249,57)
\psline[linecolor=lightgray,linewidth=1pt](7,145)(208,8)
\psline[linecolor=lightgray,linewidth=1pt](208,8)(249,57)
\pspolygon[fillstyle=solid,fillcolor=LightBlue,linecolor=black](113,247)(249,57)(7,145)
\pspolygon[fillstyle=solid,fillcolor=LightBlue,linecolor=black](249,57)(208,8)(7,145)
\pscircle[fillstyle=solid,fillcolor=MidRed,linecolor=black](7,145){6}
\pscircle[fillstyle=solid,fillcolor=MidRed,linecolor=black](93,161){6}
\pscircle[fillstyle=solid,fillcolor=MidRed,linecolor=black](113,247){6}
\pscircle[fillstyle=solid,fillcolor=MidRed,linecolor=black](177,236){6}
\pscircle[fillstyle=solid,fillcolor=MidRed,linecolor=black](208,8){6}
\pscircle[fillstyle=solid,fillcolor=MidRed,linecolor=black](249,57){6}
}
\drawing{
\psline[linecolor=lightgray,linewidth=1pt](7,145)(113,247)
\psline[linecolor=lightgray,linewidth=1pt](113,247)(177,236)
\psline[linecolor=lightgray,linewidth=1pt](177,236)(249,57)
\psline[linecolor=lightgray,linewidth=1pt](7,145)(208,8)
\psline[linecolor=lightgray,linewidth=1pt](208,8)(249,57)
\pspolygon[fillstyle=solid,fillcolor=LightBlue,linecolor=black](113,247)(177,236)(93,161)
\pspolygon[fillstyle=solid,fillcolor=LightBlue,linecolor=black](177,236)(208,8)(93,161)
\pscircle[fillstyle=solid,fillcolor=MidRed,linecolor=black](7,145){6}
\pscircle[fillstyle=solid,fillcolor=MidRed,linecolor=black](93,161){6}
\pscircle[fillstyle=solid,fillcolor=MidRed,linecolor=black](113,247){6}
\pscircle[fillstyle=solid,fillcolor=MidRed,linecolor=black](177,236){6}
\pscircle[fillstyle=solid,fillcolor=MidRed,linecolor=black](208,8){6}
\pscircle[fillstyle=solid,fillcolor=MidRed,linecolor=black](249,57){6}
}
\drawing{
\psline[linecolor=lightgray,linewidth=1pt](7,145)(113,247)
\psline[linecolor=lightgray,linewidth=1pt](113,247)(177,236)
\psline[linecolor=lightgray,linewidth=1pt](177,236)(249,57)
\psline[linecolor=lightgray,linewidth=1pt](7,145)(208,8)
\psline[linecolor=lightgray,linewidth=1pt](208,8)(249,57)
\pspolygon[fillstyle=solid,fillcolor=LightBlue,linecolor=black](113,247)(249,57)(93,161)
\pspolygon[fillstyle=solid,fillcolor=LightBlue,linecolor=black](249,57)(208,8)(93,161)
\pscircle[fillstyle=solid,fillcolor=MidRed,linecolor=black](7,145){6}
\pscircle[fillstyle=solid,fillcolor=MidRed,linecolor=black](93,161){6}
\pscircle[fillstyle=solid,fillcolor=MidRed,linecolor=black](113,247){6}
\pscircle[fillstyle=solid,fillcolor=MidRed,linecolor=black](177,236){6}
\pscircle[fillstyle=solid,fillcolor=MidRed,linecolor=black](208,8){6}
\pscircle[fillstyle=solid,fillcolor=MidRed,linecolor=black](249,57){6}
}
\drawing{
\psline[linecolor=lightgray,linewidth=1pt](7,145)(113,247)
\psline[linecolor=lightgray,linewidth=1pt](113,247)(177,236)
\psline[linecolor=lightgray,linewidth=1pt](177,236)(249,57)
\psline[linecolor=lightgray,linewidth=1pt](7,145)(208,8)
\psline[linecolor=lightgray,linewidth=1pt](208,8)(249,57)
\pspolygon[fillstyle=solid,fillcolor=LightBlue,linecolor=black](113,247)(177,236)(7,145)
\pspolygon[fillstyle=solid,fillcolor=LightBlue,linecolor=black](177,236)(208,8)(7,145)
\pscircle[fillstyle=solid,fillcolor=MidRed,linecolor=black](7,145){6}
\pscircle[fillstyle=solid,fillcolor=MidRed,linecolor=black](93,161){6}
\pscircle[fillstyle=solid,fillcolor=MidRed,linecolor=black](113,247){6}
\pscircle[fillstyle=solid,fillcolor=MidRed,linecolor=black](177,236){6}
\pscircle[fillstyle=solid,fillcolor=MidRed,linecolor=black](208,8){6}
\pscircle[fillstyle=solid,fillcolor=MidRed,linecolor=black](249,57){6}
}
\drawing{
\psline[linecolor=lightgray,linewidth=1pt](7,145)(113,247)
\psline[linecolor=lightgray,linewidth=1pt](113,247)(177,236)
\psline[linecolor=lightgray,linewidth=1pt](177,236)(249,57)
\psline[linecolor=lightgray,linewidth=1pt](7,145)(208,8)
\psline[linecolor=lightgray,linewidth=1pt](208,8)(249,57)
\pspolygon[fillstyle=solid,fillcolor=LightBlue,linecolor=black](113,247)(177,236)(208,8)
\pspolygon[fillstyle=solid,fillcolor=LightBlue,linecolor=black](113,247)(208,8)(7,145)
\pspolygon[fillstyle=solid,fillcolor=LightBlue,linecolor=black](177,236)(249,57)(208,8)
\pscircle[fillstyle=solid,fillcolor=MidRed,linecolor=black](7,145){6}
\pscircle[fillstyle=solid,fillcolor=MidRed,linecolor=black](93,161){6}
\pscircle[fillstyle=solid,fillcolor=MidRed,linecolor=black](113,247){6}
\pscircle[fillstyle=solid,fillcolor=MidRed,linecolor=black](177,236){6}
\pscircle[fillstyle=solid,fillcolor=MidRed,linecolor=black](208,8){6}
\pscircle[fillstyle=solid,fillcolor=MidRed,linecolor=black](249,57){6}
}
\NewOrderType
\drawing{
\psline[linecolor=lightgray,linewidth=1pt](0,172)(16,198)
\psline[linecolor=lightgray,linewidth=1pt](16,198)(241,244)
\psline[linecolor=lightgray,linewidth=1pt](0,172)(99,12)
\psline[linecolor=lightgray,linewidth=1pt](99,12)(241,244)
\pspolygon[fillstyle=solid,fillcolor=LightBlue,linecolor=black](0,172)(157,140)(99,12)
\pspolygon[fillstyle=solid,fillcolor=LightBlue,linecolor=black](0,172)(241,244)(157,140)
\pspolygon[fillstyle=solid,fillcolor=LightBlue,linecolor=black](0,172)(16,198)(241,244)
\pscircle[fillstyle=solid,fillcolor=MidRed,linecolor=black](0,172){6}
\pscircle[fillstyle=solid,fillcolor=MidRed,linecolor=black](16,198){6}
\pscircle[fillstyle=solid,fillcolor=MidRed,linecolor=black](99,12){6}
\pscircle[fillstyle=solid,fillcolor=MidRed,linecolor=black](101,110){6}
\pscircle[fillstyle=solid,fillcolor=MidRed,linecolor=black](157,140){6}
\pscircle[fillstyle=solid,fillcolor=MidRed,linecolor=black](241,244){6}
}
\drawing{
\psline[linecolor=lightgray,linewidth=1pt](0,172)(16,198)
\psline[linecolor=lightgray,linewidth=1pt](16,198)(241,244)
\psline[linecolor=lightgray,linewidth=1pt](0,172)(99,12)
\psline[linecolor=lightgray,linewidth=1pt](99,12)(241,244)
\pspolygon[fillstyle=solid,fillcolor=LightBlue,linecolor=black](0,172)(241,244)(101,110)
\pspolygon[fillstyle=solid,fillcolor=LightBlue,linecolor=black](241,244)(157,140)(101,110)
\pscircle[fillstyle=solid,fillcolor=MidRed,linecolor=black](0,172){6}
\pscircle[fillstyle=solid,fillcolor=MidRed,linecolor=black](16,198){6}
\pscircle[fillstyle=solid,fillcolor=MidRed,linecolor=black](99,12){6}
\pscircle[fillstyle=solid,fillcolor=MidRed,linecolor=black](101,110){6}
\pscircle[fillstyle=solid,fillcolor=MidRed,linecolor=black](157,140){6}
\pscircle[fillstyle=solid,fillcolor=MidRed,linecolor=black](241,244){6}
}
\drawing{
\psline[linecolor=lightgray,linewidth=1pt](0,172)(16,198)
\psline[linecolor=lightgray,linewidth=1pt](16,198)(241,244)
\psline[linecolor=lightgray,linewidth=1pt](0,172)(99,12)
\psline[linecolor=lightgray,linewidth=1pt](99,12)(241,244)
\pspolygon[fillstyle=solid,fillcolor=LightBlue,linecolor=black](0,172)(16,198)(101,110)
\pspolygon[fillstyle=solid,fillcolor=LightBlue,linecolor=black](16,198)(241,244)(101,110)
\pspolygon[fillstyle=solid,fillcolor=LightBlue,linecolor=black](241,244)(101,110)(99,12)
\pscircle[fillstyle=solid,fillcolor=MidRed,linecolor=black](0,172){6}
\pscircle[fillstyle=solid,fillcolor=MidRed,linecolor=black](16,198){6}
\pscircle[fillstyle=solid,fillcolor=MidRed,linecolor=black](99,12){6}
\pscircle[fillstyle=solid,fillcolor=MidRed,linecolor=black](101,110){6}
\pscircle[fillstyle=solid,fillcolor=MidRed,linecolor=black](157,140){6}
\pscircle[fillstyle=solid,fillcolor=MidRed,linecolor=black](241,244){6}
}
\drawing{
\psline[linecolor=lightgray,linewidth=1pt](0,172)(16,198)
\psline[linecolor=lightgray,linewidth=1pt](16,198)(241,244)
\psline[linecolor=lightgray,linewidth=1pt](0,172)(99,12)
\psline[linecolor=lightgray,linewidth=1pt](99,12)(241,244)
\pspolygon[fillstyle=solid,fillcolor=LightBlue,linecolor=black](16,198)(157,140)(99,12)
\pscircle[fillstyle=solid,fillcolor=MidRed,linecolor=black](0,172){6}
\pscircle[fillstyle=solid,fillcolor=MidRed,linecolor=black](16,198){6}
\pscircle[fillstyle=solid,fillcolor=MidRed,linecolor=black](99,12){6}
\pscircle[fillstyle=solid,fillcolor=MidRed,linecolor=black](101,110){6}
\pscircle[fillstyle=solid,fillcolor=MidRed,linecolor=black](157,140){6}
\pscircle[fillstyle=solid,fillcolor=MidRed,linecolor=black](241,244){6}
}
\drawing{
\psline[linecolor=lightgray,linewidth=1pt](0,172)(16,198)
\psline[linecolor=lightgray,linewidth=1pt](16,198)(241,244)
\psline[linecolor=lightgray,linewidth=1pt](0,172)(99,12)
\psline[linecolor=lightgray,linewidth=1pt](99,12)(241,244)
\pspolygon[fillstyle=solid,fillcolor=LightBlue,linecolor=black](0,172)(241,244)(99,12)
\pscircle[fillstyle=solid,fillcolor=MidRed,linecolor=black](0,172){6}
\pscircle[fillstyle=solid,fillcolor=MidRed,linecolor=black](16,198){6}
\pscircle[fillstyle=solid,fillcolor=MidRed,linecolor=black](99,12){6}
\pscircle[fillstyle=solid,fillcolor=MidRed,linecolor=black](101,110){6}
\pscircle[fillstyle=solid,fillcolor=MidRed,linecolor=black](157,140){6}
\pscircle[fillstyle=solid,fillcolor=MidRed,linecolor=black](241,244){6}
}
\drawing{
\psline[linecolor=lightgray,linewidth=1pt](0,172)(16,198)
\psline[linecolor=lightgray,linewidth=1pt](16,198)(241,244)
\psline[linecolor=lightgray,linewidth=1pt](0,172)(99,12)
\psline[linecolor=lightgray,linewidth=1pt](99,12)(241,244)
\pspolygon[fillstyle=solid,fillcolor=LightBlue,linecolor=black](16,198)(241,244)(99,12)
\pscircle[fillstyle=solid,fillcolor=MidRed,linecolor=black](0,172){6}
\pscircle[fillstyle=solid,fillcolor=MidRed,linecolor=black](16,198){6}
\pscircle[fillstyle=solid,fillcolor=MidRed,linecolor=black](99,12){6}
\pscircle[fillstyle=solid,fillcolor=MidRed,linecolor=black](101,110){6}
\pscircle[fillstyle=solid,fillcolor=MidRed,linecolor=black](157,140){6}
\pscircle[fillstyle=solid,fillcolor=MidRed,linecolor=black](241,244){6}
}
\drawing{
\psline[linecolor=lightgray,linewidth=1pt](0,172)(16,198)
\psline[linecolor=lightgray,linewidth=1pt](16,198)(241,244)
\psline[linecolor=lightgray,linewidth=1pt](0,172)(99,12)
\psline[linecolor=lightgray,linewidth=1pt](99,12)(241,244)
\pspolygon[fillstyle=solid,fillcolor=LightBlue,linecolor=black](0,172)(16,198)(99,12)
\pspolygon[fillstyle=solid,fillcolor=LightBlue,linecolor=black](16,198)(101,110)(99,12)
\pspolygon[fillstyle=solid,fillcolor=LightBlue,linecolor=black](16,198)(157,140)(101,110)
\pscircle[fillstyle=solid,fillcolor=MidRed,linecolor=black](0,172){6}
\pscircle[fillstyle=solid,fillcolor=MidRed,linecolor=black](16,198){6}
\pscircle[fillstyle=solid,fillcolor=MidRed,linecolor=black](99,12){6}
\pscircle[fillstyle=solid,fillcolor=MidRed,linecolor=black](101,110){6}
\pscircle[fillstyle=solid,fillcolor=MidRed,linecolor=black](157,140){6}
\pscircle[fillstyle=solid,fillcolor=MidRed,linecolor=black](241,244){6}
}
\drawing{
\psline[linecolor=lightgray,linewidth=1pt](0,172)(16,198)
\psline[linecolor=lightgray,linewidth=1pt](16,198)(241,244)
\psline[linecolor=lightgray,linewidth=1pt](0,172)(99,12)
\psline[linecolor=lightgray,linewidth=1pt](99,12)(241,244)
\pspolygon[fillstyle=solid,fillcolor=LightBlue,linecolor=black](0,172)(101,110)(99,12)
\pspolygon[fillstyle=solid,fillcolor=LightBlue,linecolor=black](0,172)(157,140)(101,110)
\pspolygon[fillstyle=solid,fillcolor=LightBlue,linecolor=black](0,172)(16,198)(157,140)
\pspolygon[fillstyle=solid,fillcolor=LightBlue,linecolor=black](157,140)(101,110)(99,12)
\pspolygon[fillstyle=solid,fillcolor=LightBlue,linecolor=black](16,198)(241,244)(157,140)
\pspolygon[fillstyle=solid,fillcolor=LightBlue,linecolor=black](241,244)(157,140)(99,12)
\pscircle[fillstyle=solid,fillcolor=MidRed,linecolor=black](0,172){6}
\pscircle[fillstyle=solid,fillcolor=MidRed,linecolor=black](16,198){6}
\pscircle[fillstyle=solid,fillcolor=MidRed,linecolor=black](99,12){6}
\pscircle[fillstyle=solid,fillcolor=MidRed,linecolor=black](101,110){6}
\pscircle[fillstyle=solid,fillcolor=MidRed,linecolor=black](157,140){6}
\pscircle[fillstyle=solid,fillcolor=MidRed,linecolor=black](241,244){6}
}
\NewOrderType
\drawing{
\psline[linecolor=lightgray,linewidth=1pt](28,249)(248,120)
\psline[linecolor=lightgray,linewidth=1pt](28,249)(55,7)
\psline[linecolor=lightgray,linewidth=1pt](55,7)(91,10)
\psline[linecolor=lightgray,linewidth=1pt](91,10)(248,120)
\pspolygon[fillstyle=solid,fillcolor=LightBlue,linecolor=black](28,249)(62,189)(248,120)
\pspolygon[fillstyle=solid,fillcolor=LightBlue,linecolor=black](55,7)(62,189)(248,120)
\pscircle[fillstyle=solid,fillcolor=MidRed,linecolor=black](28,249){6}
\pscircle[fillstyle=solid,fillcolor=MidRed,linecolor=black](55,7){6}
\pscircle[fillstyle=solid,fillcolor=MidRed,linecolor=black](62,189){6}
\pscircle[fillstyle=solid,fillcolor=MidRed,linecolor=black](91,10){6}
\pscircle[fillstyle=solid,fillcolor=MidRed,linecolor=black](182,108){6}
\pscircle[fillstyle=solid,fillcolor=MidRed,linecolor=black](248,120){6}
}
\drawing{
\psline[linecolor=lightgray,linewidth=1pt](28,249)(248,120)
\psline[linecolor=lightgray,linewidth=1pt](28,249)(55,7)
\psline[linecolor=lightgray,linewidth=1pt](55,7)(91,10)
\psline[linecolor=lightgray,linewidth=1pt](91,10)(248,120)
\pspolygon[fillstyle=solid,fillcolor=LightBlue,linecolor=black](91,10)(28,249)(182,108)
\pscircle[fillstyle=solid,fillcolor=MidRed,linecolor=black](28,249){6}
\pscircle[fillstyle=solid,fillcolor=MidRed,linecolor=black](55,7){6}
\pscircle[fillstyle=solid,fillcolor=MidRed,linecolor=black](62,189){6}
\pscircle[fillstyle=solid,fillcolor=MidRed,linecolor=black](91,10){6}
\pscircle[fillstyle=solid,fillcolor=MidRed,linecolor=black](182,108){6}
\pscircle[fillstyle=solid,fillcolor=MidRed,linecolor=black](248,120){6}
}
\drawing{
\psline[linecolor=lightgray,linewidth=1pt](28,249)(248,120)
\psline[linecolor=lightgray,linewidth=1pt](28,249)(55,7)
\psline[linecolor=lightgray,linewidth=1pt](55,7)(91,10)
\psline[linecolor=lightgray,linewidth=1pt](91,10)(248,120)
\pspolygon[fillstyle=solid,fillcolor=LightBlue,linecolor=black](55,7)(28,249)(182,108)
\pspolygon[fillstyle=solid,fillcolor=LightBlue,linecolor=black](91,10)(55,7)(182,108)
\pscircle[fillstyle=solid,fillcolor=MidRed,linecolor=black](28,249){6}
\pscircle[fillstyle=solid,fillcolor=MidRed,linecolor=black](55,7){6}
\pscircle[fillstyle=solid,fillcolor=MidRed,linecolor=black](62,189){6}
\pscircle[fillstyle=solid,fillcolor=MidRed,linecolor=black](91,10){6}
\pscircle[fillstyle=solid,fillcolor=MidRed,linecolor=black](182,108){6}
\pscircle[fillstyle=solid,fillcolor=MidRed,linecolor=black](248,120){6}
}
\drawing{
\psline[linecolor=lightgray,linewidth=1pt](28,249)(248,120)
\psline[linecolor=lightgray,linewidth=1pt](28,249)(55,7)
\psline[linecolor=lightgray,linewidth=1pt](55,7)(91,10)
\psline[linecolor=lightgray,linewidth=1pt](91,10)(248,120)
\pspolygon[fillstyle=solid,fillcolor=LightBlue,linecolor=black](91,10)(28,249)(248,120)
\pscircle[fillstyle=solid,fillcolor=MidRed,linecolor=black](28,249){6}
\pscircle[fillstyle=solid,fillcolor=MidRed,linecolor=black](55,7){6}
\pscircle[fillstyle=solid,fillcolor=MidRed,linecolor=black](62,189){6}
\pscircle[fillstyle=solid,fillcolor=MidRed,linecolor=black](91,10){6}
\pscircle[fillstyle=solid,fillcolor=MidRed,linecolor=black](182,108){6}
\pscircle[fillstyle=solid,fillcolor=MidRed,linecolor=black](248,120){6}
}
\drawing{
\psline[linecolor=lightgray,linewidth=1pt](28,249)(248,120)
\psline[linecolor=lightgray,linewidth=1pt](28,249)(55,7)
\psline[linecolor=lightgray,linewidth=1pt](55,7)(91,10)
\psline[linecolor=lightgray,linewidth=1pt](91,10)(248,120)
\pspolygon[fillstyle=solid,fillcolor=LightBlue,linecolor=black](62,189)(182,108)(248,120)
\pspolygon[fillstyle=solid,fillcolor=LightBlue,linecolor=black](91,10)(62,189)(182,108)
\pspolygon[fillstyle=solid,fillcolor=LightBlue,linecolor=black](91,10)(28,249)(62,189)
\pspolygon[fillstyle=solid,fillcolor=LightBlue,linecolor=black](91,10)(182,108)(248,120)
\pspolygon[fillstyle=solid,fillcolor=LightBlue,linecolor=black](91,10)(55,7)(28,249)
\pscircle[fillstyle=solid,fillcolor=MidRed,linecolor=black](28,249){6}
\pscircle[fillstyle=solid,fillcolor=MidRed,linecolor=black](55,7){6}
\pscircle[fillstyle=solid,fillcolor=MidRed,linecolor=black](62,189){6}
\pscircle[fillstyle=solid,fillcolor=MidRed,linecolor=black](91,10){6}
\pscircle[fillstyle=solid,fillcolor=MidRed,linecolor=black](182,108){6}
\pscircle[fillstyle=solid,fillcolor=MidRed,linecolor=black](248,120){6}
}
\drawing{
\psline[linecolor=lightgray,linewidth=1pt](28,249)(248,120)
\psline[linecolor=lightgray,linewidth=1pt](28,249)(55,7)
\psline[linecolor=lightgray,linewidth=1pt](55,7)(91,10)
\psline[linecolor=lightgray,linewidth=1pt](91,10)(248,120)
\pspolygon[fillstyle=solid,fillcolor=LightBlue,linecolor=black](91,10)(62,189)(248,120)
\pspolygon[fillstyle=solid,fillcolor=LightBlue,linecolor=black](91,10)(55,7)(62,189)
\pscircle[fillstyle=solid,fillcolor=MidRed,linecolor=black](28,249){6}
\pscircle[fillstyle=solid,fillcolor=MidRed,linecolor=black](55,7){6}
\pscircle[fillstyle=solid,fillcolor=MidRed,linecolor=black](62,189){6}
\pscircle[fillstyle=solid,fillcolor=MidRed,linecolor=black](91,10){6}
\pscircle[fillstyle=solid,fillcolor=MidRed,linecolor=black](182,108){6}
\pscircle[fillstyle=solid,fillcolor=MidRed,linecolor=black](248,120){6}
}
\drawing{
\psline[linecolor=lightgray,linewidth=1pt](28,249)(248,120)
\psline[linecolor=lightgray,linewidth=1pt](28,249)(55,7)
\psline[linecolor=lightgray,linewidth=1pt](55,7)(91,10)
\psline[linecolor=lightgray,linewidth=1pt](91,10)(248,120)
\pspolygon[fillstyle=solid,fillcolor=LightBlue,linecolor=black](55,7)(28,249)(248,120)
\pscircle[fillstyle=solid,fillcolor=MidRed,linecolor=black](28,249){6}
\pscircle[fillstyle=solid,fillcolor=MidRed,linecolor=black](55,7){6}
\pscircle[fillstyle=solid,fillcolor=MidRed,linecolor=black](62,189){6}
\pscircle[fillstyle=solid,fillcolor=MidRed,linecolor=black](91,10){6}
\pscircle[fillstyle=solid,fillcolor=MidRed,linecolor=black](182,108){6}
\pscircle[fillstyle=solid,fillcolor=MidRed,linecolor=black](248,120){6}
}
\drawing{
\psline[linecolor=lightgray,linewidth=1pt](28,249)(248,120)
\psline[linecolor=lightgray,linewidth=1pt](28,249)(55,7)
\psline[linecolor=lightgray,linewidth=1pt](55,7)(91,10)
\psline[linecolor=lightgray,linewidth=1pt](91,10)(248,120)
\pspolygon[fillstyle=solid,fillcolor=LightBlue,linecolor=black](28,249)(182,108)(248,120)
\pspolygon[fillstyle=solid,fillcolor=LightBlue,linecolor=black](28,249)(62,189)(182,108)
\pspolygon[fillstyle=solid,fillcolor=LightBlue,linecolor=black](55,7)(62,189)(182,108)
\pspolygon[fillstyle=solid,fillcolor=LightBlue,linecolor=black](55,7)(182,108)(248,120)
\pspolygon[fillstyle=solid,fillcolor=LightBlue,linecolor=black](55,7)(28,249)(62,189)
\pspolygon[fillstyle=solid,fillcolor=LightBlue,linecolor=black](91,10)(55,7)(248,120)
\pscircle[fillstyle=solid,fillcolor=MidRed,linecolor=black](28,249){6}
\pscircle[fillstyle=solid,fillcolor=MidRed,linecolor=black](55,7){6}
\pscircle[fillstyle=solid,fillcolor=MidRed,linecolor=black](62,189){6}
\pscircle[fillstyle=solid,fillcolor=MidRed,linecolor=black](91,10){6}
\pscircle[fillstyle=solid,fillcolor=MidRed,linecolor=black](182,108){6}
\pscircle[fillstyle=solid,fillcolor=MidRed,linecolor=black](248,120){6}
}
\NewOrderType
\drawing{
\psline[linecolor=lightgray,linewidth=1pt](37,55)(73,201)
\psline[linecolor=lightgray,linewidth=1pt](73,201)(160,173)
\psline[linecolor=lightgray,linewidth=1pt](160,173)(219,83)
\psline[linecolor=lightgray,linewidth=1pt](37,55)(219,83)
\pspolygon[fillstyle=solid,fillcolor=LightBlue,linecolor=black](160,173)(119,94)(37,55)
\pspolygon[fillstyle=solid,fillcolor=LightBlue,linecolor=black](160,173)(81,135)(37,55)
\pscircle[fillstyle=solid,fillcolor=MidRed,linecolor=black](37,55){6}
\pscircle[fillstyle=solid,fillcolor=MidRed,linecolor=black](73,201){6}
\pscircle[fillstyle=solid,fillcolor=MidRed,linecolor=black](81,135){6}
\pscircle[fillstyle=solid,fillcolor=MidRed,linecolor=black](119,94){6}
\pscircle[fillstyle=solid,fillcolor=MidRed,linecolor=black](160,173){6}
\pscircle[fillstyle=solid,fillcolor=MidRed,linecolor=black](219,83){6}
}
\drawing{
\psline[linecolor=lightgray,linewidth=1pt](37,55)(73,201)
\psline[linecolor=lightgray,linewidth=1pt](73,201)(160,173)
\psline[linecolor=lightgray,linewidth=1pt](160,173)(219,83)
\psline[linecolor=lightgray,linewidth=1pt](37,55)(219,83)
\pspolygon[fillstyle=solid,fillcolor=LightBlue,linecolor=black](73,201)(119,94)(81,135)
\pspolygon[fillstyle=solid,fillcolor=LightBlue,linecolor=black](73,201)(219,83)(119,94)
\pscircle[fillstyle=solid,fillcolor=MidRed,linecolor=black](37,55){6}
\pscircle[fillstyle=solid,fillcolor=MidRed,linecolor=black](73,201){6}
\pscircle[fillstyle=solid,fillcolor=MidRed,linecolor=black](81,135){6}
\pscircle[fillstyle=solid,fillcolor=MidRed,linecolor=black](119,94){6}
\pscircle[fillstyle=solid,fillcolor=MidRed,linecolor=black](160,173){6}
\pscircle[fillstyle=solid,fillcolor=MidRed,linecolor=black](219,83){6}
}
\drawing{
\psline[linecolor=lightgray,linewidth=1pt](37,55)(73,201)
\psline[linecolor=lightgray,linewidth=1pt](73,201)(160,173)
\psline[linecolor=lightgray,linewidth=1pt](160,173)(219,83)
\psline[linecolor=lightgray,linewidth=1pt](37,55)(219,83)
\pspolygon[fillstyle=solid,fillcolor=LightBlue,linecolor=black](219,83)(119,94)(81,135)
\pspolygon[fillstyle=solid,fillcolor=LightBlue,linecolor=black](73,201)(219,83)(81,135)
\pscircle[fillstyle=solid,fillcolor=MidRed,linecolor=black](37,55){6}
\pscircle[fillstyle=solid,fillcolor=MidRed,linecolor=black](73,201){6}
\pscircle[fillstyle=solid,fillcolor=MidRed,linecolor=black](81,135){6}
\pscircle[fillstyle=solid,fillcolor=MidRed,linecolor=black](119,94){6}
\pscircle[fillstyle=solid,fillcolor=MidRed,linecolor=black](160,173){6}
\pscircle[fillstyle=solid,fillcolor=MidRed,linecolor=black](219,83){6}
}
\drawing{
\psline[linecolor=lightgray,linewidth=1pt](37,55)(73,201)
\psline[linecolor=lightgray,linewidth=1pt](73,201)(160,173)
\psline[linecolor=lightgray,linewidth=1pt](160,173)(219,83)
\psline[linecolor=lightgray,linewidth=1pt](37,55)(219,83)
\pspolygon[fillstyle=solid,fillcolor=LightBlue,linecolor=black](73,201)(119,94)(37,55)
\pspolygon[fillstyle=solid,fillcolor=LightBlue,linecolor=black](73,201)(160,173)(119,94)
\pscircle[fillstyle=solid,fillcolor=MidRed,linecolor=black](37,55){6}
\pscircle[fillstyle=solid,fillcolor=MidRed,linecolor=black](73,201){6}
\pscircle[fillstyle=solid,fillcolor=MidRed,linecolor=black](81,135){6}
\pscircle[fillstyle=solid,fillcolor=MidRed,linecolor=black](119,94){6}
\pscircle[fillstyle=solid,fillcolor=MidRed,linecolor=black](160,173){6}
\pscircle[fillstyle=solid,fillcolor=MidRed,linecolor=black](219,83){6}
}
\drawing{
\psline[linecolor=lightgray,linewidth=1pt](37,55)(73,201)
\psline[linecolor=lightgray,linewidth=1pt](73,201)(160,173)
\psline[linecolor=lightgray,linewidth=1pt](160,173)(219,83)
\psline[linecolor=lightgray,linewidth=1pt](37,55)(219,83)
\pspolygon[fillstyle=solid,fillcolor=LightBlue,linecolor=black](219,83)(81,135)(37,55)
\pspolygon[fillstyle=solid,fillcolor=LightBlue,linecolor=black](160,173)(219,83)(81,135)
\pscircle[fillstyle=solid,fillcolor=MidRed,linecolor=black](37,55){6}
\pscircle[fillstyle=solid,fillcolor=MidRed,linecolor=black](73,201){6}
\pscircle[fillstyle=solid,fillcolor=MidRed,linecolor=black](81,135){6}
\pscircle[fillstyle=solid,fillcolor=MidRed,linecolor=black](119,94){6}
\pscircle[fillstyle=solid,fillcolor=MidRed,linecolor=black](160,173){6}
\pscircle[fillstyle=solid,fillcolor=MidRed,linecolor=black](219,83){6}
}
\drawing{
\psline[linecolor=lightgray,linewidth=1pt](37,55)(73,201)
\psline[linecolor=lightgray,linewidth=1pt](73,201)(160,173)
\psline[linecolor=lightgray,linewidth=1pt](160,173)(219,83)
\psline[linecolor=lightgray,linewidth=1pt](37,55)(219,83)
\pspolygon[fillstyle=solid,fillcolor=LightBlue,linecolor=black](73,201)(219,83)(37,55)
\pspolygon[fillstyle=solid,fillcolor=LightBlue,linecolor=black](73,201)(160,173)(219,83)
\pscircle[fillstyle=solid,fillcolor=MidRed,linecolor=black](37,55){6}
\pscircle[fillstyle=solid,fillcolor=MidRed,linecolor=black](73,201){6}
\pscircle[fillstyle=solid,fillcolor=MidRed,linecolor=black](81,135){6}
\pscircle[fillstyle=solid,fillcolor=MidRed,linecolor=black](119,94){6}
\pscircle[fillstyle=solid,fillcolor=MidRed,linecolor=black](160,173){6}
\pscircle[fillstyle=solid,fillcolor=MidRed,linecolor=black](219,83){6}
}
\drawing{
\psline[linecolor=lightgray,linewidth=1pt](37,55)(73,201)
\psline[linecolor=lightgray,linewidth=1pt](73,201)(160,173)
\psline[linecolor=lightgray,linewidth=1pt](160,173)(219,83)
\psline[linecolor=lightgray,linewidth=1pt](37,55)(219,83)
\pspolygon[fillstyle=solid,fillcolor=LightBlue,linecolor=black](160,173)(219,83)(37,55)
\pspolygon[fillstyle=solid,fillcolor=LightBlue,linecolor=black](73,201)(160,173)(37,55)
\pscircle[fillstyle=solid,fillcolor=MidRed,linecolor=black](37,55){6}
\pscircle[fillstyle=solid,fillcolor=MidRed,linecolor=black](73,201){6}
\pscircle[fillstyle=solid,fillcolor=MidRed,linecolor=black](81,135){6}
\pscircle[fillstyle=solid,fillcolor=MidRed,linecolor=black](119,94){6}
\pscircle[fillstyle=solid,fillcolor=MidRed,linecolor=black](160,173){6}
\pscircle[fillstyle=solid,fillcolor=MidRed,linecolor=black](219,83){6}
}
\drawing{
\psline[linecolor=lightgray,linewidth=1pt](37,55)(73,201)
\psline[linecolor=lightgray,linewidth=1pt](73,201)(160,173)
\psline[linecolor=lightgray,linewidth=1pt](160,173)(219,83)
\psline[linecolor=lightgray,linewidth=1pt](37,55)(219,83)
\pspolygon[fillstyle=solid,fillcolor=LightBlue,linecolor=black](119,94)(81,135)(37,55)
\pspolygon[fillstyle=solid,fillcolor=LightBlue,linecolor=black](160,173)(119,94)(81,135)
\pspolygon[fillstyle=solid,fillcolor=LightBlue,linecolor=black](160,173)(219,83)(119,94)
\pspolygon[fillstyle=solid,fillcolor=LightBlue,linecolor=black](219,83)(119,94)(37,55)
\pspolygon[fillstyle=solid,fillcolor=LightBlue,linecolor=black](73,201)(160,173)(81,135)
\pspolygon[fillstyle=solid,fillcolor=LightBlue,linecolor=black](73,201)(81,135)(37,55)
\pscircle[fillstyle=solid,fillcolor=MidRed,linecolor=black](37,55){6}
\pscircle[fillstyle=solid,fillcolor=MidRed,linecolor=black](73,201){6}
\pscircle[fillstyle=solid,fillcolor=MidRed,linecolor=black](81,135){6}
\pscircle[fillstyle=solid,fillcolor=MidRed,linecolor=black](119,94){6}
\pscircle[fillstyle=solid,fillcolor=MidRed,linecolor=black](160,173){6}
\pscircle[fillstyle=solid,fillcolor=MidRed,linecolor=black](219,83){6}
}
\NewOrderType 
\drawing{
\psline[linecolor=lightgray,linewidth=1pt](15,85)(44,227)
\psline[linecolor=lightgray,linewidth=1pt](44,227)(241,99)
\psline[linecolor=lightgray,linewidth=1pt](15,85)(164,29)
\psline[linecolor=lightgray,linewidth=1pt](164,29)(241,99)
\pspolygon[fillstyle=solid,fillcolor=LightBlue,linecolor=black](15,85)(73,108)(44,227)
\pspolygon[fillstyle=solid,fillcolor=LightBlue,linecolor=black](164,29)(113,162)(44,227)
\pspolygon[fillstyle=solid,fillcolor=LightBlue,linecolor=black](164,29)(73,108)(44,227)
\pscircle[fillstyle=solid,fillcolor=MidRed,linecolor=black](15,85){6}
\pscircle[fillstyle=solid,fillcolor=MidRed,linecolor=black](44,227){6}
\pscircle[fillstyle=solid,fillcolor=MidRed,linecolor=black](73,108){6}
\pscircle[fillstyle=solid,fillcolor=MidRed,linecolor=black](113,162){6}
\pscircle[fillstyle=solid,fillcolor=MidRed,linecolor=black](164,29){6}
\pscircle[fillstyle=solid,fillcolor=MidRed,linecolor=black](241,99){6}
}
\drawing{
\psline[linecolor=lightgray,linewidth=1pt](15,85)(44,227)
\psline[linecolor=lightgray,linewidth=1pt](44,227)(241,99)
\psline[linecolor=lightgray,linewidth=1pt](15,85)(164,29)
\psline[linecolor=lightgray,linewidth=1pt](164,29)(241,99)
\pspolygon[fillstyle=solid,fillcolor=LightBlue,linecolor=black](241,99)(15,85)(113,162)
\pscircle[fillstyle=solid,fillcolor=MidRed,linecolor=black](15,85){6}
\pscircle[fillstyle=solid,fillcolor=MidRed,linecolor=black](44,227){6}
\pscircle[fillstyle=solid,fillcolor=MidRed,linecolor=black](73,108){6}
\pscircle[fillstyle=solid,fillcolor=MidRed,linecolor=black](113,162){6}
\pscircle[fillstyle=solid,fillcolor=MidRed,linecolor=black](164,29){6}
\pscircle[fillstyle=solid,fillcolor=MidRed,linecolor=black](241,99){6}
}
\drawing{
\psline[linecolor=lightgray,linewidth=1pt](15,85)(44,227)
\psline[linecolor=lightgray,linewidth=1pt](44,227)(241,99)
\psline[linecolor=lightgray,linewidth=1pt](15,85)(164,29)
\psline[linecolor=lightgray,linewidth=1pt](164,29)(241,99)
\pspolygon[fillstyle=solid,fillcolor=LightBlue,linecolor=black](164,29)(15,85)(113,162)
\pscircle[fillstyle=solid,fillcolor=MidRed,linecolor=black](15,85){6}
\pscircle[fillstyle=solid,fillcolor=MidRed,linecolor=black](44,227){6}
\pscircle[fillstyle=solid,fillcolor=MidRed,linecolor=black](73,108){6}
\pscircle[fillstyle=solid,fillcolor=MidRed,linecolor=black](113,162){6}
\pscircle[fillstyle=solid,fillcolor=MidRed,linecolor=black](164,29){6}
\pscircle[fillstyle=solid,fillcolor=MidRed,linecolor=black](241,99){6}
}
\drawing{
\psline[linecolor=lightgray,linewidth=1pt](15,85)(44,227)
\psline[linecolor=lightgray,linewidth=1pt](44,227)(241,99)
\psline[linecolor=lightgray,linewidth=1pt](15,85)(164,29)
\psline[linecolor=lightgray,linewidth=1pt](164,29)(241,99)
\pspolygon[fillstyle=solid,fillcolor=LightBlue,linecolor=black](241,99)(73,108)(44,227)
\pspolygon[fillstyle=solid,fillcolor=LightBlue,linecolor=black](241,99)(15,85)(73,108)
\pscircle[fillstyle=solid,fillcolor=MidRed,linecolor=black](15,85){6}
\pscircle[fillstyle=solid,fillcolor=MidRed,linecolor=black](44,227){6}
\pscircle[fillstyle=solid,fillcolor=MidRed,linecolor=black](73,108){6}
\pscircle[fillstyle=solid,fillcolor=MidRed,linecolor=black](113,162){6}
\pscircle[fillstyle=solid,fillcolor=MidRed,linecolor=black](164,29){6}
\pscircle[fillstyle=solid,fillcolor=MidRed,linecolor=black](241,99){6}
}
\drawing{
\psline[linecolor=lightgray,linewidth=1pt](15,85)(44,227)
\psline[linecolor=lightgray,linewidth=1pt](44,227)(241,99)
\psline[linecolor=lightgray,linewidth=1pt](15,85)(164,29)
\psline[linecolor=lightgray,linewidth=1pt](164,29)(241,99)
\pspolygon[fillstyle=solid,fillcolor=LightBlue,linecolor=black](241,99)(15,85)(44,227)
\pspolygon[fillstyle=solid,fillcolor=LightBlue,linecolor=black](241,99)(164,29)(15,85)
\pscircle[fillstyle=solid,fillcolor=MidRed,linecolor=black](15,85){6}
\pscircle[fillstyle=solid,fillcolor=MidRed,linecolor=black](44,227){6}
\pscircle[fillstyle=solid,fillcolor=MidRed,linecolor=black](73,108){6}
\pscircle[fillstyle=solid,fillcolor=MidRed,linecolor=black](113,162){6}
\pscircle[fillstyle=solid,fillcolor=MidRed,linecolor=black](164,29){6}
\pscircle[fillstyle=solid,fillcolor=MidRed,linecolor=black](241,99){6}
}
\drawing{
\psline[linecolor=lightgray,linewidth=1pt](15,85)(44,227)
\psline[linecolor=lightgray,linewidth=1pt](44,227)(241,99)
\psline[linecolor=lightgray,linewidth=1pt](15,85)(164,29)
\psline[linecolor=lightgray,linewidth=1pt](164,29)(241,99)
\pspolygon[fillstyle=solid,fillcolor=LightBlue,linecolor=black](73,108)(113,162)(44,227)
\pspolygon[fillstyle=solid,fillcolor=LightBlue,linecolor=black](241,99)(73,108)(113,162)
\pspolygon[fillstyle=solid,fillcolor=LightBlue,linecolor=black](241,99)(164,29)(73,108)
\pscircle[fillstyle=solid,fillcolor=MidRed,linecolor=black](15,85){6}
\pscircle[fillstyle=solid,fillcolor=MidRed,linecolor=black](44,227){6}
\pscircle[fillstyle=solid,fillcolor=MidRed,linecolor=black](73,108){6}
\pscircle[fillstyle=solid,fillcolor=MidRed,linecolor=black](113,162){6}
\pscircle[fillstyle=solid,fillcolor=MidRed,linecolor=black](164,29){6}
\pscircle[fillstyle=solid,fillcolor=MidRed,linecolor=black](241,99){6}
}
\drawing{
\psline[linecolor=lightgray,linewidth=1pt](15,85)(44,227)
\psline[linecolor=lightgray,linewidth=1pt](44,227)(241,99)
\psline[linecolor=lightgray,linewidth=1pt](15,85)(164,29)
\psline[linecolor=lightgray,linewidth=1pt](164,29)(241,99)
\pspolygon[fillstyle=solid,fillcolor=LightBlue,linecolor=black](164,29)(15,85)(44,227)
\pspolygon[fillstyle=solid,fillcolor=LightBlue,linecolor=black](241,99)(164,29)(44,227)
\pscircle[fillstyle=solid,fillcolor=MidRed,linecolor=black](15,85){6}
\pscircle[fillstyle=solid,fillcolor=MidRed,linecolor=black](44,227){6}
\pscircle[fillstyle=solid,fillcolor=MidRed,linecolor=black](73,108){6}
\pscircle[fillstyle=solid,fillcolor=MidRed,linecolor=black](113,162){6}
\pscircle[fillstyle=solid,fillcolor=MidRed,linecolor=black](164,29){6}
\pscircle[fillstyle=solid,fillcolor=MidRed,linecolor=black](241,99){6}
}
\drawing{
\psline[linecolor=lightgray,linewidth=1pt](15,85)(44,227)
\psline[linecolor=lightgray,linewidth=1pt](44,227)(241,99)
\psline[linecolor=lightgray,linewidth=1pt](15,85)(164,29)
\psline[linecolor=lightgray,linewidth=1pt](164,29)(241,99)
\pspolygon[fillstyle=solid,fillcolor=LightBlue,linecolor=black](15,85)(113,162)(44,227)
\pspolygon[fillstyle=solid,fillcolor=LightBlue,linecolor=black](15,85)(73,108)(113,162)
\pspolygon[fillstyle=solid,fillcolor=LightBlue,linecolor=black](164,29)(73,108)(113,162)
\pspolygon[fillstyle=solid,fillcolor=LightBlue,linecolor=black](164,29)(15,85)(73,108)
\pspolygon[fillstyle=solid,fillcolor=LightBlue,linecolor=black](241,99)(113,162)(44,227)
\pspolygon[fillstyle=solid,fillcolor=LightBlue,linecolor=black](241,99)(164,29)(113,162)
\pscircle[fillstyle=solid,fillcolor=MidRed,linecolor=black](15,85){6}
\pscircle[fillstyle=solid,fillcolor=MidRed,linecolor=black](44,227){6}
\pscircle[fillstyle=solid,fillcolor=MidRed,linecolor=black](73,108){6}
\pscircle[fillstyle=solid,fillcolor=MidRed,linecolor=black](113,162){6}
\pscircle[fillstyle=solid,fillcolor=MidRed,linecolor=black](164,29){6}
\pscircle[fillstyle=solid,fillcolor=MidRed,linecolor=black](241,99){6}
}
\NewOrderType 
\drawing{
\psline[linecolor=lightgray,linewidth=1pt](14,115)(105,238)
\psline[linecolor=lightgray,linewidth=1pt](105,238)(205,220)
\psline[linecolor=lightgray,linewidth=1pt](205,220)(242,19)
\psline[linecolor=lightgray,linewidth=1pt](14,115)(74,31)
\psline[linecolor=lightgray,linewidth=1pt](74,31)(242,19)
\pspolygon[fillstyle=solid,fillcolor=LightBlue,linecolor=black](105,238)(205,220)(118,142)
\pspolygon[fillstyle=solid,fillcolor=LightBlue,linecolor=black](14,115)(105,238)(118,142)
\pspolygon[fillstyle=solid,fillcolor=LightBlue,linecolor=black](14,115)(118,142)(242,19)
\pspolygon[fillstyle=solid,fillcolor=LightBlue,linecolor=black](205,220)(118,142)(242,19)
\pspolygon[fillstyle=solid,fillcolor=LightBlue,linecolor=black](14,115)(242,19)(74,31)
\pscircle[fillstyle=solid,fillcolor=MidRed,linecolor=black](14,115){6}
\pscircle[fillstyle=solid,fillcolor=MidRed,linecolor=black](74,31){6}
\pscircle[fillstyle=solid,fillcolor=MidRed,linecolor=black](105,238){6}
\pscircle[fillstyle=solid,fillcolor=MidRed,linecolor=black](118,142){6}
\pscircle[fillstyle=solid,fillcolor=MidRed,linecolor=black](205,220){6}
\pscircle[fillstyle=solid,fillcolor=MidRed,linecolor=black](242,19){6}
}
\drawing{
\psline[linecolor=lightgray,linewidth=1pt](14,115)(105,238)
\psline[linecolor=lightgray,linewidth=1pt](105,238)(205,220)
\psline[linecolor=lightgray,linewidth=1pt](205,220)(242,19)
\psline[linecolor=lightgray,linewidth=1pt](14,115)(74,31)
\psline[linecolor=lightgray,linewidth=1pt](74,31)(242,19)
\pspolygon[fillstyle=solid,fillcolor=LightBlue,linecolor=black](14,115)(205,220)(74,31)
\pscircle[fillstyle=solid,fillcolor=MidRed,linecolor=black](14,115){6}
\pscircle[fillstyle=solid,fillcolor=MidRed,linecolor=black](74,31){6}
\pscircle[fillstyle=solid,fillcolor=MidRed,linecolor=black](105,238){6}
\pscircle[fillstyle=solid,fillcolor=MidRed,linecolor=black](118,142){6}
\pscircle[fillstyle=solid,fillcolor=MidRed,linecolor=black](205,220){6}
\pscircle[fillstyle=solid,fillcolor=MidRed,linecolor=black](242,19){6}
}
\drawing{
\psline[linecolor=lightgray,linewidth=1pt](14,115)(105,238)
\psline[linecolor=lightgray,linewidth=1pt](105,238)(205,220)
\psline[linecolor=lightgray,linewidth=1pt](205,220)(242,19)
\psline[linecolor=lightgray,linewidth=1pt](14,115)(74,31)
\psline[linecolor=lightgray,linewidth=1pt](74,31)(242,19)
\pspolygon[fillstyle=solid,fillcolor=LightBlue,linecolor=black](14,115)(205,220)(242,19)
\pscircle[fillstyle=solid,fillcolor=MidRed,linecolor=black](14,115){6}
\pscircle[fillstyle=solid,fillcolor=MidRed,linecolor=black](74,31){6}
\pscircle[fillstyle=solid,fillcolor=MidRed,linecolor=black](105,238){6}
\pscircle[fillstyle=solid,fillcolor=MidRed,linecolor=black](118,142){6}
\pscircle[fillstyle=solid,fillcolor=MidRed,linecolor=black](205,220){6}
\pscircle[fillstyle=solid,fillcolor=MidRed,linecolor=black](242,19){6}
}
\drawing{
\psline[linecolor=lightgray,linewidth=1pt](14,115)(105,238)
\psline[linecolor=lightgray,linewidth=1pt](105,238)(205,220)
\psline[linecolor=lightgray,linewidth=1pt](205,220)(242,19)
\psline[linecolor=lightgray,linewidth=1pt](14,115)(74,31)
\psline[linecolor=lightgray,linewidth=1pt](74,31)(242,19)
\pspolygon[fillstyle=solid,fillcolor=LightBlue,linecolor=black](14,115)(105,238)(205,220)
\pspolygon[fillstyle=solid,fillcolor=LightBlue,linecolor=black](14,115)(205,220)(118,142)
\pspolygon[fillstyle=solid,fillcolor=LightBlue,linecolor=black](14,115)(118,142)(74,31)
\pspolygon[fillstyle=solid,fillcolor=LightBlue,linecolor=black](205,220)(118,142)(74,31)
\pscircle[fillstyle=solid,fillcolor=MidRed,linecolor=black](14,115){6}
\pscircle[fillstyle=solid,fillcolor=MidRed,linecolor=black](74,31){6}
\pscircle[fillstyle=solid,fillcolor=MidRed,linecolor=black](105,238){6}
\pscircle[fillstyle=solid,fillcolor=MidRed,linecolor=black](118,142){6}
\pscircle[fillstyle=solid,fillcolor=MidRed,linecolor=black](205,220){6}
\pscircle[fillstyle=solid,fillcolor=MidRed,linecolor=black](242,19){6}
}
\drawing{
\psline[linecolor=lightgray,linewidth=1pt](14,115)(105,238)
\psline[linecolor=lightgray,linewidth=1pt](105,238)(205,220)
\psline[linecolor=lightgray,linewidth=1pt](205,220)(242,19)
\psline[linecolor=lightgray,linewidth=1pt](14,115)(74,31)
\psline[linecolor=lightgray,linewidth=1pt](74,31)(242,19)
\pspolygon[fillstyle=solid,fillcolor=LightBlue,linecolor=black](105,238)(242,19)(74,31)
\pscircle[fillstyle=solid,fillcolor=MidRed,linecolor=black](14,115){6}
\pscircle[fillstyle=solid,fillcolor=MidRed,linecolor=black](74,31){6}
\pscircle[fillstyle=solid,fillcolor=MidRed,linecolor=black](105,238){6}
\pscircle[fillstyle=solid,fillcolor=MidRed,linecolor=black](118,142){6}
\pscircle[fillstyle=solid,fillcolor=MidRed,linecolor=black](205,220){6}
\pscircle[fillstyle=solid,fillcolor=MidRed,linecolor=black](242,19){6}
}
\drawing{
\psline[linecolor=lightgray,linewidth=1pt](14,115)(105,238)
\psline[linecolor=lightgray,linewidth=1pt](105,238)(205,220)
\psline[linecolor=lightgray,linewidth=1pt](205,220)(242,19)
\psline[linecolor=lightgray,linewidth=1pt](14,115)(74,31)
\psline[linecolor=lightgray,linewidth=1pt](74,31)(242,19)
\pspolygon[fillstyle=solid,fillcolor=LightBlue,linecolor=black](14,115)(105,238)(242,19)
\pscircle[fillstyle=solid,fillcolor=MidRed,linecolor=black](14,115){6}
\pscircle[fillstyle=solid,fillcolor=MidRed,linecolor=black](74,31){6}
\pscircle[fillstyle=solid,fillcolor=MidRed,linecolor=black](105,238){6}
\pscircle[fillstyle=solid,fillcolor=MidRed,linecolor=black](118,142){6}
\pscircle[fillstyle=solid,fillcolor=MidRed,linecolor=black](205,220){6}
\pscircle[fillstyle=solid,fillcolor=MidRed,linecolor=black](242,19){6}
}
\drawing{
\psline[linecolor=lightgray,linewidth=1pt](14,115)(105,238)
\psline[linecolor=lightgray,linewidth=1pt](105,238)(205,220)
\psline[linecolor=lightgray,linewidth=1pt](205,220)(242,19)
\psline[linecolor=lightgray,linewidth=1pt](14,115)(74,31)
\psline[linecolor=lightgray,linewidth=1pt](74,31)(242,19)
\pspolygon[fillstyle=solid,fillcolor=LightBlue,linecolor=black](105,238)(205,220)(74,31)
\pspolygon[fillstyle=solid,fillcolor=LightBlue,linecolor=black](205,220)(242,19)(74,31)
\pscircle[fillstyle=solid,fillcolor=MidRed,linecolor=black](14,115){6}
\pscircle[fillstyle=solid,fillcolor=MidRed,linecolor=black](74,31){6}
\pscircle[fillstyle=solid,fillcolor=MidRed,linecolor=black](105,238){6}
\pscircle[fillstyle=solid,fillcolor=MidRed,linecolor=black](118,142){6}
\pscircle[fillstyle=solid,fillcolor=MidRed,linecolor=black](205,220){6}
\pscircle[fillstyle=solid,fillcolor=MidRed,linecolor=black](242,19){6}
}
\drawing{
\psline[linecolor=lightgray,linewidth=1pt](14,115)(105,238)
\psline[linecolor=lightgray,linewidth=1pt](105,238)(205,220)
\psline[linecolor=lightgray,linewidth=1pt](205,220)(242,19)
\psline[linecolor=lightgray,linewidth=1pt](14,115)(74,31)
\psline[linecolor=lightgray,linewidth=1pt](74,31)(242,19)
\pspolygon[fillstyle=solid,fillcolor=LightBlue,linecolor=black](105,238)(118,142)(242,19)
\pspolygon[fillstyle=solid,fillcolor=LightBlue,linecolor=black](105,238)(118,142)(74,31)
\pspolygon[fillstyle=solid,fillcolor=LightBlue,linecolor=black](118,142)(242,19)(74,31)
\pspolygon[fillstyle=solid,fillcolor=LightBlue,linecolor=black](105,238)(205,220)(242,19)
\pspolygon[fillstyle=solid,fillcolor=LightBlue,linecolor=black](14,115)(105,238)(74,31)
\pscircle[fillstyle=solid,fillcolor=MidRed,linecolor=black](14,115){6}
\pscircle[fillstyle=solid,fillcolor=MidRed,linecolor=black](74,31){6}
\pscircle[fillstyle=solid,fillcolor=MidRed,linecolor=black](105,238){6}
\pscircle[fillstyle=solid,fillcolor=MidRed,linecolor=black](118,142){6}
\pscircle[fillstyle=solid,fillcolor=MidRed,linecolor=black](205,220){6}
\pscircle[fillstyle=solid,fillcolor=MidRed,linecolor=black](242,19){6}
}
\NewOrderType 
\drawing{
\psline[linecolor=lightgray,linewidth=1pt](2,227)(100,243)
\psline[linecolor=lightgray,linewidth=1pt](100,243)(235,209)
\psline[linecolor=lightgray,linewidth=1pt](2,227)(42,13)
\psline[linecolor=lightgray,linewidth=1pt](42,13)(235,209)
\pspolygon[fillstyle=solid,fillcolor=LightBlue,linecolor=black](100,243)(154,163)(42,13)
\pscircle[fillstyle=solid,fillcolor=MidRed,linecolor=black](2,227){6}
\pscircle[fillstyle=solid,fillcolor=MidRed,linecolor=black](42,13){6}
\pscircle[fillstyle=solid,fillcolor=MidRed,linecolor=black](100,243){6}
\pscircle[fillstyle=solid,fillcolor=MidRed,linecolor=black](105,202){6}
\pscircle[fillstyle=solid,fillcolor=MidRed,linecolor=black](154,163){6}
\pscircle[fillstyle=solid,fillcolor=MidRed,linecolor=black](235,209){6}
}
\drawing{
\psline[linecolor=lightgray,linewidth=1pt](2,227)(100,243)
\psline[linecolor=lightgray,linewidth=1pt](100,243)(235,209)
\psline[linecolor=lightgray,linewidth=1pt](2,227)(42,13)
\psline[linecolor=lightgray,linewidth=1pt](42,13)(235,209)
\pspolygon[fillstyle=solid,fillcolor=LightBlue,linecolor=black](2,227)(235,209)(42,13)
\pscircle[fillstyle=solid,fillcolor=MidRed,linecolor=black](2,227){6}
\pscircle[fillstyle=solid,fillcolor=MidRed,linecolor=black](42,13){6}
\pscircle[fillstyle=solid,fillcolor=MidRed,linecolor=black](100,243){6}
\pscircle[fillstyle=solid,fillcolor=MidRed,linecolor=black](105,202){6}
\pscircle[fillstyle=solid,fillcolor=MidRed,linecolor=black](154,163){6}
\pscircle[fillstyle=solid,fillcolor=MidRed,linecolor=black](235,209){6}
}
\drawing{
\psline[linecolor=lightgray,linewidth=1pt](2,227)(100,243)
\psline[linecolor=lightgray,linewidth=1pt](100,243)(235,209)
\psline[linecolor=lightgray,linewidth=1pt](2,227)(42,13)
\psline[linecolor=lightgray,linewidth=1pt](42,13)(235,209)
\pspolygon[fillstyle=solid,fillcolor=LightBlue,linecolor=black](2,227)(235,209)(154,163)
\pscircle[fillstyle=solid,fillcolor=MidRed,linecolor=black](2,227){6}
\pscircle[fillstyle=solid,fillcolor=MidRed,linecolor=black](42,13){6}
\pscircle[fillstyle=solid,fillcolor=MidRed,linecolor=black](100,243){6}
\pscircle[fillstyle=solid,fillcolor=MidRed,linecolor=black](105,202){6}
\pscircle[fillstyle=solid,fillcolor=MidRed,linecolor=black](154,163){6}
\pscircle[fillstyle=solid,fillcolor=MidRed,linecolor=black](235,209){6}
}
\drawing{
\psline[linecolor=lightgray,linewidth=1pt](2,227)(100,243)
\psline[linecolor=lightgray,linewidth=1pt](100,243)(235,209)
\psline[linecolor=lightgray,linewidth=1pt](2,227)(42,13)
\psline[linecolor=lightgray,linewidth=1pt](42,13)(235,209)
\pspolygon[fillstyle=solid,fillcolor=LightBlue,linecolor=black](2,227)(100,243)(154,163)
\pscircle[fillstyle=solid,fillcolor=MidRed,linecolor=black](2,227){6}
\pscircle[fillstyle=solid,fillcolor=MidRed,linecolor=black](42,13){6}
\pscircle[fillstyle=solid,fillcolor=MidRed,linecolor=black](100,243){6}
\pscircle[fillstyle=solid,fillcolor=MidRed,linecolor=black](105,202){6}
\pscircle[fillstyle=solid,fillcolor=MidRed,linecolor=black](154,163){6}
\pscircle[fillstyle=solid,fillcolor=MidRed,linecolor=black](235,209){6}
}
\drawing{
\psline[linecolor=lightgray,linewidth=1pt](2,227)(100,243)
\psline[linecolor=lightgray,linewidth=1pt](100,243)(235,209)
\psline[linecolor=lightgray,linewidth=1pt](2,227)(42,13)
\psline[linecolor=lightgray,linewidth=1pt](42,13)(235,209)
\pspolygon[fillstyle=solid,fillcolor=LightBlue,linecolor=black](100,243)(235,209)(42,13)
\pscircle[fillstyle=solid,fillcolor=MidRed,linecolor=black](2,227){6}
\pscircle[fillstyle=solid,fillcolor=MidRed,linecolor=black](42,13){6}
\pscircle[fillstyle=solid,fillcolor=MidRed,linecolor=black](100,243){6}
\pscircle[fillstyle=solid,fillcolor=MidRed,linecolor=black](105,202){6}
\pscircle[fillstyle=solid,fillcolor=MidRed,linecolor=black](154,163){6}
\pscircle[fillstyle=solid,fillcolor=MidRed,linecolor=black](235,209){6}
}
\drawing{
\psline[linecolor=lightgray,linewidth=1pt](2,227)(100,243)
\psline[linecolor=lightgray,linewidth=1pt](100,243)(235,209)
\psline[linecolor=lightgray,linewidth=1pt](2,227)(42,13)
\psline[linecolor=lightgray,linewidth=1pt](42,13)(235,209)
\pspolygon[fillstyle=solid,fillcolor=LightBlue,linecolor=black](2,227)(100,243)(235,209)
\pspolygon[fillstyle=solid,fillcolor=LightBlue,linecolor=black](2,227)(235,209)(105,202)
\pspolygon[fillstyle=solid,fillcolor=LightBlue,linecolor=black](2,227)(105,202)(42,13)
\pspolygon[fillstyle=solid,fillcolor=LightBlue,linecolor=black](235,209)(105,202)(42,13)
\pscircle[fillstyle=solid,fillcolor=MidRed,linecolor=black](2,227){6}
\pscircle[fillstyle=solid,fillcolor=MidRed,linecolor=black](42,13){6}
\pscircle[fillstyle=solid,fillcolor=MidRed,linecolor=black](100,243){6}
\pscircle[fillstyle=solid,fillcolor=MidRed,linecolor=black](105,202){6}
\pscircle[fillstyle=solid,fillcolor=MidRed,linecolor=black](154,163){6}
\pscircle[fillstyle=solid,fillcolor=MidRed,linecolor=black](235,209){6}
}
\drawing{
\psline[linecolor=lightgray,linewidth=1pt](2,227)(100,243)
\psline[linecolor=lightgray,linewidth=1pt](100,243)(235,209)
\psline[linecolor=lightgray,linewidth=1pt](2,227)(42,13)
\psline[linecolor=lightgray,linewidth=1pt](42,13)(235,209)
\pspolygon[fillstyle=solid,fillcolor=LightBlue,linecolor=black](100,243)(235,209)(105,202)
\pspolygon[fillstyle=solid,fillcolor=LightBlue,linecolor=black](2,227)(100,243)(105,202)
\pspolygon[fillstyle=solid,fillcolor=LightBlue,linecolor=black](2,227)(105,202)(154,163)
\pspolygon[fillstyle=solid,fillcolor=LightBlue,linecolor=black](235,209)(105,202)(154,163)
\pspolygon[fillstyle=solid,fillcolor=LightBlue,linecolor=black](2,227)(154,163)(42,13)
\pscircle[fillstyle=solid,fillcolor=MidRed,linecolor=black](2,227){6}
\pscircle[fillstyle=solid,fillcolor=MidRed,linecolor=black](42,13){6}
\pscircle[fillstyle=solid,fillcolor=MidRed,linecolor=black](100,243){6}
\pscircle[fillstyle=solid,fillcolor=MidRed,linecolor=black](105,202){6}
\pscircle[fillstyle=solid,fillcolor=MidRed,linecolor=black](154,163){6}
\pscircle[fillstyle=solid,fillcolor=MidRed,linecolor=black](235,209){6}
}
\drawing{
\psline[linecolor=lightgray,linewidth=1pt](2,227)(100,243)
\psline[linecolor=lightgray,linewidth=1pt](100,243)(235,209)
\psline[linecolor=lightgray,linewidth=1pt](2,227)(42,13)
\psline[linecolor=lightgray,linewidth=1pt](42,13)(235,209)
\pspolygon[fillstyle=solid,fillcolor=LightBlue,linecolor=black](100,243)(105,202)(154,163)
\pspolygon[fillstyle=solid,fillcolor=LightBlue,linecolor=black](100,243)(105,202)(42,13)
\pspolygon[fillstyle=solid,fillcolor=LightBlue,linecolor=black](105,202)(154,163)(42,13)
\pspolygon[fillstyle=solid,fillcolor=LightBlue,linecolor=black](100,243)(235,209)(154,163)
\pspolygon[fillstyle=solid,fillcolor=LightBlue,linecolor=black](235,209)(154,163)(42,13)
\pspolygon[fillstyle=solid,fillcolor=LightBlue,linecolor=black](2,227)(100,243)(42,13)
\pscircle[fillstyle=solid,fillcolor=MidRed,linecolor=black](2,227){6}
\pscircle[fillstyle=solid,fillcolor=MidRed,linecolor=black](42,13){6}
\pscircle[fillstyle=solid,fillcolor=MidRed,linecolor=black](100,243){6}
\pscircle[fillstyle=solid,fillcolor=MidRed,linecolor=black](105,202){6}
\pscircle[fillstyle=solid,fillcolor=MidRed,linecolor=black](154,163){6}
\pscircle[fillstyle=solid,fillcolor=MidRed,linecolor=black](235,209){6}
}
\NewOrderType 
\drawing{
\psline[linecolor=lightgray,linewidth=1pt](41,230)(207,211)
\psline[linecolor=lightgray,linewidth=1pt](207,211)(235,16)
\psline[linecolor=lightgray,linewidth=1pt](41,230)(130,25)
\psline[linecolor=lightgray,linewidth=1pt](130,25)(235,16)
\pspolygon[fillstyle=solid,fillcolor=LightBlue,linecolor=black](170,66)(130,25)(41,230)
\pspolygon[fillstyle=solid,fillcolor=LightBlue,linecolor=black](170,66)(89,197)(41,230)
\pspolygon[fillstyle=solid,fillcolor=LightBlue,linecolor=black](235,16)(170,66)(89,197)
\pspolygon[fillstyle=solid,fillcolor=LightBlue,linecolor=black](207,211)(235,16)(89,197)
\pscircle[fillstyle=solid,fillcolor=MidRed,linecolor=black](41,230){6}
\pscircle[fillstyle=solid,fillcolor=MidRed,linecolor=black](89,197){6}
\pscircle[fillstyle=solid,fillcolor=MidRed,linecolor=black](130,25){6}
\pscircle[fillstyle=solid,fillcolor=MidRed,linecolor=black](170,66){6}
\pscircle[fillstyle=solid,fillcolor=MidRed,linecolor=black](207,211){6}
\pscircle[fillstyle=solid,fillcolor=MidRed,linecolor=black](235,16){6}
}
\drawing{
\psline[linecolor=lightgray,linewidth=1pt](41,230)(207,211)
\psline[linecolor=lightgray,linewidth=1pt](207,211)(235,16)
\psline[linecolor=lightgray,linewidth=1pt](41,230)(130,25)
\psline[linecolor=lightgray,linewidth=1pt](130,25)(235,16)
\pspolygon[fillstyle=solid,fillcolor=LightBlue,linecolor=black](235,16)(130,25)(89,197)
\pscircle[fillstyle=solid,fillcolor=MidRed,linecolor=black](41,230){6}
\pscircle[fillstyle=solid,fillcolor=MidRed,linecolor=black](89,197){6}
\pscircle[fillstyle=solid,fillcolor=MidRed,linecolor=black](130,25){6}
\pscircle[fillstyle=solid,fillcolor=MidRed,linecolor=black](170,66){6}
\pscircle[fillstyle=solid,fillcolor=MidRed,linecolor=black](207,211){6}
\pscircle[fillstyle=solid,fillcolor=MidRed,linecolor=black](235,16){6}
}
\drawing{
\psline[linecolor=lightgray,linewidth=1pt](41,230)(207,211)
\psline[linecolor=lightgray,linewidth=1pt](207,211)(235,16)
\psline[linecolor=lightgray,linewidth=1pt](41,230)(130,25)
\psline[linecolor=lightgray,linewidth=1pt](130,25)(235,16)
\pspolygon[fillstyle=solid,fillcolor=LightBlue,linecolor=black](207,211)(130,25)(89,197)
\pspolygon[fillstyle=solid,fillcolor=LightBlue,linecolor=black](207,211)(235,16)(130,25)
\pscircle[fillstyle=solid,fillcolor=MidRed,linecolor=black](41,230){6}
\pscircle[fillstyle=solid,fillcolor=MidRed,linecolor=black](89,197){6}
\pscircle[fillstyle=solid,fillcolor=MidRed,linecolor=black](130,25){6}
\pscircle[fillstyle=solid,fillcolor=MidRed,linecolor=black](170,66){6}
\pscircle[fillstyle=solid,fillcolor=MidRed,linecolor=black](207,211){6}
\pscircle[fillstyle=solid,fillcolor=MidRed,linecolor=black](235,16){6}
}
\drawing{
\psline[linecolor=lightgray,linewidth=1pt](41,230)(207,211)
\psline[linecolor=lightgray,linewidth=1pt](207,211)(235,16)
\psline[linecolor=lightgray,linewidth=1pt](41,230)(130,25)
\psline[linecolor=lightgray,linewidth=1pt](130,25)(235,16)
\pspolygon[fillstyle=solid,fillcolor=LightBlue,linecolor=black](207,211)(235,16)(41,230)
\pspolygon[fillstyle=solid,fillcolor=LightBlue,linecolor=black](235,16)(130,25)(41,230)
\pscircle[fillstyle=solid,fillcolor=MidRed,linecolor=black](41,230){6}
\pscircle[fillstyle=solid,fillcolor=MidRed,linecolor=black](89,197){6}
\pscircle[fillstyle=solid,fillcolor=MidRed,linecolor=black](130,25){6}
\pscircle[fillstyle=solid,fillcolor=MidRed,linecolor=black](170,66){6}
\pscircle[fillstyle=solid,fillcolor=MidRed,linecolor=black](207,211){6}
\pscircle[fillstyle=solid,fillcolor=MidRed,linecolor=black](235,16){6}
}
\drawing{
\psline[linecolor=lightgray,linewidth=1pt](41,230)(207,211)
\psline[linecolor=lightgray,linewidth=1pt](207,211)(235,16)
\psline[linecolor=lightgray,linewidth=1pt](41,230)(130,25)
\psline[linecolor=lightgray,linewidth=1pt](130,25)(235,16)
\pspolygon[fillstyle=solid,fillcolor=LightBlue,linecolor=black](207,211)(170,66)(41,230)
\pscircle[fillstyle=solid,fillcolor=MidRed,linecolor=black](41,230){6}
\pscircle[fillstyle=solid,fillcolor=MidRed,linecolor=black](89,197){6}
\pscircle[fillstyle=solid,fillcolor=MidRed,linecolor=black](130,25){6}
\pscircle[fillstyle=solid,fillcolor=MidRed,linecolor=black](170,66){6}
\pscircle[fillstyle=solid,fillcolor=MidRed,linecolor=black](207,211){6}
\pscircle[fillstyle=solid,fillcolor=MidRed,linecolor=black](235,16){6}
}
\drawing{
\psline[linecolor=lightgray,linewidth=1pt](41,230)(207,211)
\psline[linecolor=lightgray,linewidth=1pt](207,211)(235,16)
\psline[linecolor=lightgray,linewidth=1pt](41,230)(130,25)
\psline[linecolor=lightgray,linewidth=1pt](130,25)(235,16)
\pspolygon[fillstyle=solid,fillcolor=LightBlue,linecolor=black](235,16)(170,66)(41,230)
\pspolygon[fillstyle=solid,fillcolor=LightBlue,linecolor=black](235,16)(89,197)(41,230)
\pscircle[fillstyle=solid,fillcolor=MidRed,linecolor=black](41,230){6}
\pscircle[fillstyle=solid,fillcolor=MidRed,linecolor=black](89,197){6}
\pscircle[fillstyle=solid,fillcolor=MidRed,linecolor=black](130,25){6}
\pscircle[fillstyle=solid,fillcolor=MidRed,linecolor=black](170,66){6}
\pscircle[fillstyle=solid,fillcolor=MidRed,linecolor=black](207,211){6}
\pscircle[fillstyle=solid,fillcolor=MidRed,linecolor=black](235,16){6}
}
\drawing{
\psline[linecolor=lightgray,linewidth=1pt](41,230)(207,211)
\psline[linecolor=lightgray,linewidth=1pt](207,211)(235,16)
\psline[linecolor=lightgray,linewidth=1pt](41,230)(130,25)
\psline[linecolor=lightgray,linewidth=1pt](130,25)(235,16)
\pspolygon[fillstyle=solid,fillcolor=LightBlue,linecolor=black](207,211)(130,25)(41,230)
\pspolygon[fillstyle=solid,fillcolor=LightBlue,linecolor=black](207,211)(170,66)(130,25)
\pscircle[fillstyle=solid,fillcolor=MidRed,linecolor=black](41,230){6}
\pscircle[fillstyle=solid,fillcolor=MidRed,linecolor=black](89,197){6}
\pscircle[fillstyle=solid,fillcolor=MidRed,linecolor=black](130,25){6}
\pscircle[fillstyle=solid,fillcolor=MidRed,linecolor=black](170,66){6}
\pscircle[fillstyle=solid,fillcolor=MidRed,linecolor=black](207,211){6}
\pscircle[fillstyle=solid,fillcolor=MidRed,linecolor=black](235,16){6}
}
\drawing{
\psline[linecolor=lightgray,linewidth=1pt](41,230)(207,211)
\psline[linecolor=lightgray,linewidth=1pt](207,211)(235,16)
\psline[linecolor=lightgray,linewidth=1pt](41,230)(130,25)
\psline[linecolor=lightgray,linewidth=1pt](130,25)(235,16)
\pspolygon[fillstyle=solid,fillcolor=LightBlue,linecolor=black](130,25)(89,197)(41,230)
\pspolygon[fillstyle=solid,fillcolor=LightBlue,linecolor=black](170,66)(130,25)(89,197)
\pspolygon[fillstyle=solid,fillcolor=LightBlue,linecolor=black](207,211)(170,66)(89,197)
\pspolygon[fillstyle=solid,fillcolor=LightBlue,linecolor=black](207,211)(89,197)(41,230)
\pspolygon[fillstyle=solid,fillcolor=LightBlue,linecolor=black](207,211)(235,16)(170,66)
\pspolygon[fillstyle=solid,fillcolor=LightBlue,linecolor=black](235,16)(170,66)(130,25)
\pscircle[fillstyle=solid,fillcolor=MidRed,linecolor=black](41,230){6}
\pscircle[fillstyle=solid,fillcolor=MidRed,linecolor=black](89,197){6}
\pscircle[fillstyle=solid,fillcolor=MidRed,linecolor=black](130,25){6}
\pscircle[fillstyle=solid,fillcolor=MidRed,linecolor=black](170,66){6}
\pscircle[fillstyle=solid,fillcolor=MidRed,linecolor=black](207,211){6}
\pscircle[fillstyle=solid,fillcolor=MidRed,linecolor=black](235,16){6}
}
\NewOrderType 
\drawing{
\psline[linecolor=lightgray,linewidth=1pt](2,238)(245,218)
\psline[linecolor=lightgray,linewidth=1pt](2,238)(236,27)
\psline[linecolor=lightgray,linewidth=1pt](236,27)(245,218)
\pspolygon[fillstyle=solid,fillcolor=LightBlue,linecolor=black](2,238)(198,120)(236,27)
\pspolygon[fillstyle=solid,fillcolor=LightBlue,linecolor=black](2,238)(245,218)(198,120)
\pspolygon[fillstyle=solid,fillcolor=LightBlue,linecolor=black](245,218)(198,120)(236,27)
\pscircle[fillstyle=solid,fillcolor=MidRed,linecolor=black](2,238){6}
\pscircle[fillstyle=solid,fillcolor=MidRed,linecolor=black](125,147){6}
\pscircle[fillstyle=solid,fillcolor=MidRed,linecolor=black](198,120){6}
\pscircle[fillstyle=solid,fillcolor=MidRed,linecolor=black](227,130){6}
\pscircle[fillstyle=solid,fillcolor=MidRed,linecolor=black](236,27){6}
\pscircle[fillstyle=solid,fillcolor=MidRed,linecolor=black](245,218){6}
}
\drawing{
\psline[linecolor=lightgray,linewidth=1pt](2,238)(245,218)
\psline[linecolor=lightgray,linewidth=1pt](2,238)(236,27)
\psline[linecolor=lightgray,linewidth=1pt](236,27)(245,218)
\pspolygon[fillstyle=solid,fillcolor=LightBlue,linecolor=black](245,218)(198,120)(125,147)
\pspolygon[fillstyle=solid,fillcolor=LightBlue,linecolor=black](245,218)(227,130)(198,120)
\pscircle[fillstyle=solid,fillcolor=MidRed,linecolor=black](2,238){6}
\pscircle[fillstyle=solid,fillcolor=MidRed,linecolor=black](125,147){6}
\pscircle[fillstyle=solid,fillcolor=MidRed,linecolor=black](198,120){6}
\pscircle[fillstyle=solid,fillcolor=MidRed,linecolor=black](227,130){6}
\pscircle[fillstyle=solid,fillcolor=MidRed,linecolor=black](236,27){6}
\pscircle[fillstyle=solid,fillcolor=MidRed,linecolor=black](245,218){6}
}
\drawing{
\psline[linecolor=lightgray,linewidth=1pt](2,238)(245,218)
\psline[linecolor=lightgray,linewidth=1pt](2,238)(236,27)
\psline[linecolor=lightgray,linewidth=1pt](236,27)(245,218)
\pspolygon[fillstyle=solid,fillcolor=LightBlue,linecolor=black](227,130)(198,120)(125,147)
\pspolygon[fillstyle=solid,fillcolor=LightBlue,linecolor=black](245,218)(227,130)(125,147)
\pscircle[fillstyle=solid,fillcolor=MidRed,linecolor=black](2,238){6}
\pscircle[fillstyle=solid,fillcolor=MidRed,linecolor=black](125,147){6}
\pscircle[fillstyle=solid,fillcolor=MidRed,linecolor=black](198,120){6}
\pscircle[fillstyle=solid,fillcolor=MidRed,linecolor=black](227,130){6}
\pscircle[fillstyle=solid,fillcolor=MidRed,linecolor=black](236,27){6}
\pscircle[fillstyle=solid,fillcolor=MidRed,linecolor=black](245,218){6}
}
\drawing{
\psline[linecolor=lightgray,linewidth=1pt](2,238)(245,218)
\psline[linecolor=lightgray,linewidth=1pt](2,238)(236,27)
\psline[linecolor=lightgray,linewidth=1pt](236,27)(245,218)
\pspolygon[fillstyle=solid,fillcolor=LightBlue,linecolor=black](227,130)(125,147)(236,27)
\pspolygon[fillstyle=solid,fillcolor=LightBlue,linecolor=black](2,238)(227,130)(125,147)
\pscircle[fillstyle=solid,fillcolor=MidRed,linecolor=black](2,238){6}
\pscircle[fillstyle=solid,fillcolor=MidRed,linecolor=black](125,147){6}
\pscircle[fillstyle=solid,fillcolor=MidRed,linecolor=black](198,120){6}
\pscircle[fillstyle=solid,fillcolor=MidRed,linecolor=black](227,130){6}
\pscircle[fillstyle=solid,fillcolor=MidRed,linecolor=black](236,27){6}
\pscircle[fillstyle=solid,fillcolor=MidRed,linecolor=black](245,218){6}
}
\drawing{
\psline[linecolor=lightgray,linewidth=1pt](2,238)(245,218)
\psline[linecolor=lightgray,linewidth=1pt](2,238)(236,27)
\psline[linecolor=lightgray,linewidth=1pt](236,27)(245,218)
\pspolygon[fillstyle=solid,fillcolor=LightBlue,linecolor=black](2,238)(245,218)(236,27)
\pscircle[fillstyle=solid,fillcolor=MidRed,linecolor=black](2,238){6}
\pscircle[fillstyle=solid,fillcolor=MidRed,linecolor=black](125,147){6}
\pscircle[fillstyle=solid,fillcolor=MidRed,linecolor=black](198,120){6}
\pscircle[fillstyle=solid,fillcolor=MidRed,linecolor=black](227,130){6}
\pscircle[fillstyle=solid,fillcolor=MidRed,linecolor=black](236,27){6}
\pscircle[fillstyle=solid,fillcolor=MidRed,linecolor=black](245,218){6}
}
\drawing{
\psline[linecolor=lightgray,linewidth=1pt](2,238)(245,218)
\psline[linecolor=lightgray,linewidth=1pt](2,238)(236,27)
\psline[linecolor=lightgray,linewidth=1pt](236,27)(245,218)
\pspolygon[fillstyle=solid,fillcolor=LightBlue,linecolor=black](245,218)(125,147)(236,27)
\pspolygon[fillstyle=solid,fillcolor=LightBlue,linecolor=black](2,238)(245,218)(125,147)
\pscircle[fillstyle=solid,fillcolor=MidRed,linecolor=black](2,238){6}
\pscircle[fillstyle=solid,fillcolor=MidRed,linecolor=black](125,147){6}
\pscircle[fillstyle=solid,fillcolor=MidRed,linecolor=black](198,120){6}
\pscircle[fillstyle=solid,fillcolor=MidRed,linecolor=black](227,130){6}
\pscircle[fillstyle=solid,fillcolor=MidRed,linecolor=black](236,27){6}
\pscircle[fillstyle=solid,fillcolor=MidRed,linecolor=black](245,218){6}
}
\drawing{
\psline[linecolor=lightgray,linewidth=1pt](2,238)(245,218)
\psline[linecolor=lightgray,linewidth=1pt](2,238)(236,27)
\psline[linecolor=lightgray,linewidth=1pt](236,27)(245,218)
\pspolygon[fillstyle=solid,fillcolor=LightBlue,linecolor=black](2,238)(227,130)(236,27)
\pscircle[fillstyle=solid,fillcolor=MidRed,linecolor=black](2,238){6}
\pscircle[fillstyle=solid,fillcolor=MidRed,linecolor=black](125,147){6}
\pscircle[fillstyle=solid,fillcolor=MidRed,linecolor=black](198,120){6}
\pscircle[fillstyle=solid,fillcolor=MidRed,linecolor=black](227,130){6}
\pscircle[fillstyle=solid,fillcolor=MidRed,linecolor=black](236,27){6}
\pscircle[fillstyle=solid,fillcolor=MidRed,linecolor=black](245,218){6}
}
\drawing{
\psline[linecolor=lightgray,linewidth=1pt](2,238)(245,218)
\psline[linecolor=lightgray,linewidth=1pt](2,238)(236,27)
\psline[linecolor=lightgray,linewidth=1pt](236,27)(245,218)
\pspolygon[fillstyle=solid,fillcolor=LightBlue,linecolor=black](198,120)(125,147)(236,27)
\pspolygon[fillstyle=solid,fillcolor=LightBlue,linecolor=black](2,238)(125,147)(236,27)
\pspolygon[fillstyle=solid,fillcolor=LightBlue,linecolor=black](2,238)(198,120)(125,147)
\pspolygon[fillstyle=solid,fillcolor=LightBlue,linecolor=black](2,238)(227,130)(198,120)
\pspolygon[fillstyle=solid,fillcolor=LightBlue,linecolor=black](227,130)(198,120)(236,27)
\pspolygon[fillstyle=solid,fillcolor=LightBlue,linecolor=black](2,238)(245,218)(227,130)
\pspolygon[fillstyle=solid,fillcolor=LightBlue,linecolor=black](245,218)(227,130)(236,27)
\pscircle[fillstyle=solid,fillcolor=MidRed,linecolor=black](2,238){6}
\pscircle[fillstyle=solid,fillcolor=MidRed,linecolor=black](125,147){6}
\pscircle[fillstyle=solid,fillcolor=MidRed,linecolor=black](198,120){6}
\pscircle[fillstyle=solid,fillcolor=MidRed,linecolor=black](227,130){6}
\pscircle[fillstyle=solid,fillcolor=MidRed,linecolor=black](236,27){6}
\pscircle[fillstyle=solid,fillcolor=MidRed,linecolor=black](245,218){6}
}
\NewOrderType 
\drawing{
\psline[linecolor=lightgray,linewidth=1pt](3,72)(221,251)
\psline[linecolor=lightgray,linewidth=1pt](221,251)(244,5)
\psline[linecolor=lightgray,linewidth=1pt](3,72)(244,5)
\pspolygon[fillstyle=solid,fillcolor=LightBlue,linecolor=black](144,167)(178,137)(221,251)
\pspolygon[fillstyle=solid,fillcolor=LightBlue,linecolor=black](178,137)(217,134)(221,251)
\pspolygon[fillstyle=solid,fillcolor=LightBlue,linecolor=black](3,72)(144,167)(178,137)
\pspolygon[fillstyle=solid,fillcolor=LightBlue,linecolor=black](3,72)(178,137)(217,134)
\pspolygon[fillstyle=solid,fillcolor=LightBlue,linecolor=black](244,5)(3,72)(217,134)
\pscircle[fillstyle=solid,fillcolor=MidRed,linecolor=black](3,72){6}
\pscircle[fillstyle=solid,fillcolor=MidRed,linecolor=black](144,167){6}
\pscircle[fillstyle=solid,fillcolor=MidRed,linecolor=black](178,137){6}
\pscircle[fillstyle=solid,fillcolor=MidRed,linecolor=black](217,134){6}
\pscircle[fillstyle=solid,fillcolor=MidRed,linecolor=black](221,251){6}
\pscircle[fillstyle=solid,fillcolor=MidRed,linecolor=black](244,5){6}
}
\drawing{
\psline[linecolor=lightgray,linewidth=1pt](3,72)(221,251)
\psline[linecolor=lightgray,linewidth=1pt](221,251)(244,5)
\psline[linecolor=lightgray,linewidth=1pt](3,72)(244,5)
\pspolygon[fillstyle=solid,fillcolor=LightBlue,linecolor=black](3,72)(217,134)(221,251)
\pscircle[fillstyle=solid,fillcolor=MidRed,linecolor=black](3,72){6}
\pscircle[fillstyle=solid,fillcolor=MidRed,linecolor=black](144,167){6}
\pscircle[fillstyle=solid,fillcolor=MidRed,linecolor=black](178,137){6}
\pscircle[fillstyle=solid,fillcolor=MidRed,linecolor=black](217,134){6}
\pscircle[fillstyle=solid,fillcolor=MidRed,linecolor=black](221,251){6}
\pscircle[fillstyle=solid,fillcolor=MidRed,linecolor=black](244,5){6}
}
\drawing{
\psline[linecolor=lightgray,linewidth=1pt](3,72)(221,251)
\psline[linecolor=lightgray,linewidth=1pt](221,251)(244,5)
\psline[linecolor=lightgray,linewidth=1pt](3,72)(244,5)
\pspolygon[fillstyle=solid,fillcolor=LightBlue,linecolor=black](3,72)(144,167)(217,134)
\pscircle[fillstyle=solid,fillcolor=MidRed,linecolor=black](3,72){6}
\pscircle[fillstyle=solid,fillcolor=MidRed,linecolor=black](144,167){6}
\pscircle[fillstyle=solid,fillcolor=MidRed,linecolor=black](178,137){6}
\pscircle[fillstyle=solid,fillcolor=MidRed,linecolor=black](217,134){6}
\pscircle[fillstyle=solid,fillcolor=MidRed,linecolor=black](221,251){6}
\pscircle[fillstyle=solid,fillcolor=MidRed,linecolor=black](244,5){6}
}
\drawing{
\psline[linecolor=lightgray,linewidth=1pt](3,72)(221,251)
\psline[linecolor=lightgray,linewidth=1pt](221,251)(244,5)
\psline[linecolor=lightgray,linewidth=1pt](3,72)(244,5)
\pspolygon[fillstyle=solid,fillcolor=LightBlue,linecolor=black](244,5)(178,137)(221,251)
\pspolygon[fillstyle=solid,fillcolor=LightBlue,linecolor=black](3,72)(178,137)(221,251)
\pspolygon[fillstyle=solid,fillcolor=LightBlue,linecolor=black](244,5)(3,72)(178,137)
\pscircle[fillstyle=solid,fillcolor=MidRed,linecolor=black](3,72){6}
\pscircle[fillstyle=solid,fillcolor=MidRed,linecolor=black](144,167){6}
\pscircle[fillstyle=solid,fillcolor=MidRed,linecolor=black](178,137){6}
\pscircle[fillstyle=solid,fillcolor=MidRed,linecolor=black](217,134){6}
\pscircle[fillstyle=solid,fillcolor=MidRed,linecolor=black](221,251){6}
\pscircle[fillstyle=solid,fillcolor=MidRed,linecolor=black](244,5){6}
}
\drawing{
\psline[linecolor=lightgray,linewidth=1pt](3,72)(221,251)
\psline[linecolor=lightgray,linewidth=1pt](221,251)(244,5)
\psline[linecolor=lightgray,linewidth=1pt](3,72)(244,5)
\pspolygon[fillstyle=solid,fillcolor=LightBlue,linecolor=black](244,5)(144,167)(221,251)
\pscircle[fillstyle=solid,fillcolor=MidRed,linecolor=black](3,72){6}
\pscircle[fillstyle=solid,fillcolor=MidRed,linecolor=black](144,167){6}
\pscircle[fillstyle=solid,fillcolor=MidRed,linecolor=black](178,137){6}
\pscircle[fillstyle=solid,fillcolor=MidRed,linecolor=black](217,134){6}
\pscircle[fillstyle=solid,fillcolor=MidRed,linecolor=black](221,251){6}
\pscircle[fillstyle=solid,fillcolor=MidRed,linecolor=black](244,5){6}
}
\drawing{
\psline[linecolor=lightgray,linewidth=1pt](3,72)(221,251)
\psline[linecolor=lightgray,linewidth=1pt](221,251)(244,5)
\psline[linecolor=lightgray,linewidth=1pt](3,72)(244,5)
\pspolygon[fillstyle=solid,fillcolor=LightBlue,linecolor=black](244,5)(3,72)(221,251)
\pscircle[fillstyle=solid,fillcolor=MidRed,linecolor=black](3,72){6}
\pscircle[fillstyle=solid,fillcolor=MidRed,linecolor=black](144,167){6}
\pscircle[fillstyle=solid,fillcolor=MidRed,linecolor=black](178,137){6}
\pscircle[fillstyle=solid,fillcolor=MidRed,linecolor=black](217,134){6}
\pscircle[fillstyle=solid,fillcolor=MidRed,linecolor=black](221,251){6}
\pscircle[fillstyle=solid,fillcolor=MidRed,linecolor=black](244,5){6}
}
\drawing{
\psline[linecolor=lightgray,linewidth=1pt](3,72)(221,251)
\psline[linecolor=lightgray,linewidth=1pt](221,251)(244,5)
\psline[linecolor=lightgray,linewidth=1pt](3,72)(244,5)
\pspolygon[fillstyle=solid,fillcolor=LightBlue,linecolor=black](244,5)(144,167)(217,134)
\pscircle[fillstyle=solid,fillcolor=MidRed,linecolor=black](3,72){6}
\pscircle[fillstyle=solid,fillcolor=MidRed,linecolor=black](144,167){6}
\pscircle[fillstyle=solid,fillcolor=MidRed,linecolor=black](178,137){6}
\pscircle[fillstyle=solid,fillcolor=MidRed,linecolor=black](217,134){6}
\pscircle[fillstyle=solid,fillcolor=MidRed,linecolor=black](221,251){6}
\pscircle[fillstyle=solid,fillcolor=MidRed,linecolor=black](244,5){6}
}
\drawing{
\psline[linecolor=lightgray,linewidth=1pt](3,72)(221,251)
\psline[linecolor=lightgray,linewidth=1pt](221,251)(244,5)
\psline[linecolor=lightgray,linewidth=1pt](3,72)(244,5)
\pspolygon[fillstyle=solid,fillcolor=LightBlue,linecolor=black](144,167)(178,137)(217,134)
\pspolygon[fillstyle=solid,fillcolor=LightBlue,linecolor=black](244,5)(144,167)(178,137)
\pspolygon[fillstyle=solid,fillcolor=LightBlue,linecolor=black](244,5)(178,137)(217,134)
\pspolygon[fillstyle=solid,fillcolor=LightBlue,linecolor=black](144,167)(217,134)(221,251)
\pspolygon[fillstyle=solid,fillcolor=LightBlue,linecolor=black](244,5)(217,134)(221,251)
\pspolygon[fillstyle=solid,fillcolor=LightBlue,linecolor=black](244,5)(3,72)(144,167)
\pspolygon[fillstyle=solid,fillcolor=LightBlue,linecolor=black](3,72)(144,167)(221,251)
\pscircle[fillstyle=solid,fillcolor=MidRed,linecolor=black](3,72){6}
\pscircle[fillstyle=solid,fillcolor=MidRed,linecolor=black](144,167){6}
\pscircle[fillstyle=solid,fillcolor=MidRed,linecolor=black](178,137){6}
\pscircle[fillstyle=solid,fillcolor=MidRed,linecolor=black](217,134){6}
\pscircle[fillstyle=solid,fillcolor=MidRed,linecolor=black](221,251){6}
\pscircle[fillstyle=solid,fillcolor=MidRed,linecolor=black](244,5){6}
}
\NewOrderType 
\drawing{
\psline[linecolor=lightgray,linewidth=1pt](15,7)(94,249)
\psline[linecolor=lightgray,linewidth=1pt](94,249)(240,32)
\psline[linecolor=lightgray,linewidth=1pt](15,7)(240,32)
\pspolygon[fillstyle=solid,fillcolor=LightBlue,linecolor=black](15,7)(180,75)(94,249)
\pscircle[fillstyle=solid,fillcolor=MidRed,linecolor=black](15,7){6}
\pscircle[fillstyle=solid,fillcolor=MidRed,linecolor=black](61,102){6}
\pscircle[fillstyle=solid,fillcolor=MidRed,linecolor=black](94,249){6}
\pscircle[fillstyle=solid,fillcolor=MidRed,linecolor=black](118,164){6}
\pscircle[fillstyle=solid,fillcolor=MidRed,linecolor=black](180,75){6}
\pscircle[fillstyle=solid,fillcolor=MidRed,linecolor=black](240,32){6}
}
\drawing{
\psline[linecolor=lightgray,linewidth=1pt](15,7)(94,249)
\psline[linecolor=lightgray,linewidth=1pt](94,249)(240,32)
\psline[linecolor=lightgray,linewidth=1pt](15,7)(240,32)
\pspolygon[fillstyle=solid,fillcolor=LightBlue,linecolor=black](240,32)(61,102)(94,249)
\pscircle[fillstyle=solid,fillcolor=MidRed,linecolor=black](15,7){6}
\pscircle[fillstyle=solid,fillcolor=MidRed,linecolor=black](61,102){6}
\pscircle[fillstyle=solid,fillcolor=MidRed,linecolor=black](94,249){6}
\pscircle[fillstyle=solid,fillcolor=MidRed,linecolor=black](118,164){6}
\pscircle[fillstyle=solid,fillcolor=MidRed,linecolor=black](180,75){6}
\pscircle[fillstyle=solid,fillcolor=MidRed,linecolor=black](240,32){6}
}
\drawing{
\psline[linecolor=lightgray,linewidth=1pt](15,7)(94,249)
\psline[linecolor=lightgray,linewidth=1pt](94,249)(240,32)
\psline[linecolor=lightgray,linewidth=1pt](15,7)(240,32)
\pspolygon[fillstyle=solid,fillcolor=LightBlue,linecolor=black](240,32)(61,102)(180,75)
\pspolygon[fillstyle=solid,fillcolor=LightBlue,linecolor=black](61,102)(180,75)(94,249)
\pscircle[fillstyle=solid,fillcolor=MidRed,linecolor=black](15,7){6}
\pscircle[fillstyle=solid,fillcolor=MidRed,linecolor=black](61,102){6}
\pscircle[fillstyle=solid,fillcolor=MidRed,linecolor=black](94,249){6}
\pscircle[fillstyle=solid,fillcolor=MidRed,linecolor=black](118,164){6}
\pscircle[fillstyle=solid,fillcolor=MidRed,linecolor=black](180,75){6}
\pscircle[fillstyle=solid,fillcolor=MidRed,linecolor=black](240,32){6}
}
\drawing{
\psline[linecolor=lightgray,linewidth=1pt](15,7)(94,249)
\psline[linecolor=lightgray,linewidth=1pt](94,249)(240,32)
\psline[linecolor=lightgray,linewidth=1pt](15,7)(240,32)
\pspolygon[fillstyle=solid,fillcolor=LightBlue,linecolor=black](240,32)(15,7)(61,102)
\pspolygon[fillstyle=solid,fillcolor=LightBlue,linecolor=black](240,32)(61,102)(118,164)
\pscircle[fillstyle=solid,fillcolor=MidRed,linecolor=black](15,7){6}
\pscircle[fillstyle=solid,fillcolor=MidRed,linecolor=black](61,102){6}
\pscircle[fillstyle=solid,fillcolor=MidRed,linecolor=black](94,249){6}
\pscircle[fillstyle=solid,fillcolor=MidRed,linecolor=black](118,164){6}
\pscircle[fillstyle=solid,fillcolor=MidRed,linecolor=black](180,75){6}
\pscircle[fillstyle=solid,fillcolor=MidRed,linecolor=black](240,32){6}
}
\drawing{
\psline[linecolor=lightgray,linewidth=1pt](15,7)(94,249)
\psline[linecolor=lightgray,linewidth=1pt](94,249)(240,32)
\psline[linecolor=lightgray,linewidth=1pt](15,7)(240,32)
\pspolygon[fillstyle=solid,fillcolor=LightBlue,linecolor=black](15,7)(61,102)(180,75)
\pspolygon[fillstyle=solid,fillcolor=LightBlue,linecolor=black](61,102)(180,75)(118,164)
\pspolygon[fillstyle=solid,fillcolor=LightBlue,linecolor=black](240,32)(180,75)(118,164)
\pscircle[fillstyle=solid,fillcolor=MidRed,linecolor=black](15,7){6}
\pscircle[fillstyle=solid,fillcolor=MidRed,linecolor=black](61,102){6}
\pscircle[fillstyle=solid,fillcolor=MidRed,linecolor=black](94,249){6}
\pscircle[fillstyle=solid,fillcolor=MidRed,linecolor=black](118,164){6}
\pscircle[fillstyle=solid,fillcolor=MidRed,linecolor=black](180,75){6}
\pscircle[fillstyle=solid,fillcolor=MidRed,linecolor=black](240,32){6}
}
\drawing{
\psline[linecolor=lightgray,linewidth=1pt](15,7)(94,249)
\psline[linecolor=lightgray,linewidth=1pt](94,249)(240,32)
\psline[linecolor=lightgray,linewidth=1pt](15,7)(240,32)
\pspolygon[fillstyle=solid,fillcolor=LightBlue,linecolor=black](240,32)(15,7)(94,249)
\pscircle[fillstyle=solid,fillcolor=MidRed,linecolor=black](15,7){6}
\pscircle[fillstyle=solid,fillcolor=MidRed,linecolor=black](61,102){6}
\pscircle[fillstyle=solid,fillcolor=MidRed,linecolor=black](94,249){6}
\pscircle[fillstyle=solid,fillcolor=MidRed,linecolor=black](118,164){6}
\pscircle[fillstyle=solid,fillcolor=MidRed,linecolor=black](180,75){6}
\pscircle[fillstyle=solid,fillcolor=MidRed,linecolor=black](240,32){6}
}
\drawing{
\psline[linecolor=lightgray,linewidth=1pt](15,7)(94,249)
\psline[linecolor=lightgray,linewidth=1pt](94,249)(240,32)
\psline[linecolor=lightgray,linewidth=1pt](15,7)(240,32)
\pspolygon[fillstyle=solid,fillcolor=LightBlue,linecolor=black](15,7)(61,102)(118,164)
\pspolygon[fillstyle=solid,fillcolor=LightBlue,linecolor=black](15,7)(61,102)(94,249)
\pspolygon[fillstyle=solid,fillcolor=LightBlue,linecolor=black](240,32)(15,7)(118,164)
\pspolygon[fillstyle=solid,fillcolor=LightBlue,linecolor=black](61,102)(118,164)(94,249)
\pspolygon[fillstyle=solid,fillcolor=LightBlue,linecolor=black](240,32)(118,164)(94,249)
\pscircle[fillstyle=solid,fillcolor=MidRed,linecolor=black](15,7){6}
\pscircle[fillstyle=solid,fillcolor=MidRed,linecolor=black](61,102){6}
\pscircle[fillstyle=solid,fillcolor=MidRed,linecolor=black](94,249){6}
\pscircle[fillstyle=solid,fillcolor=MidRed,linecolor=black](118,164){6}
\pscircle[fillstyle=solid,fillcolor=MidRed,linecolor=black](180,75){6}
\pscircle[fillstyle=solid,fillcolor=MidRed,linecolor=black](240,32){6}
}
\drawing{
\psline[linecolor=lightgray,linewidth=1pt](15,7)(94,249)
\psline[linecolor=lightgray,linewidth=1pt](94,249)(240,32)
\psline[linecolor=lightgray,linewidth=1pt](15,7)(240,32)
\pspolygon[fillstyle=solid,fillcolor=LightBlue,linecolor=black](15,7)(118,164)(94,249)
\pspolygon[fillstyle=solid,fillcolor=LightBlue,linecolor=black](15,7)(180,75)(118,164)
\pspolygon[fillstyle=solid,fillcolor=LightBlue,linecolor=black](180,75)(118,164)(94,249)
\pspolygon[fillstyle=solid,fillcolor=LightBlue,linecolor=black](240,32)(180,75)(94,249)
\pspolygon[fillstyle=solid,fillcolor=LightBlue,linecolor=black](240,32)(15,7)(180,75)
\pscircle[fillstyle=solid,fillcolor=MidRed,linecolor=black](15,7){6}
\pscircle[fillstyle=solid,fillcolor=MidRed,linecolor=black](61,102){6}
\pscircle[fillstyle=solid,fillcolor=MidRed,linecolor=black](94,249){6}
\pscircle[fillstyle=solid,fillcolor=MidRed,linecolor=black](118,164){6}
\pscircle[fillstyle=solid,fillcolor=MidRed,linecolor=black](180,75){6}
\pscircle[fillstyle=solid,fillcolor=MidRed,linecolor=black](240,32){6}
}
\NewOrderType 
\drawing{
\psline[linecolor=lightgray,linewidth=1pt](20,9)(21,247)
\psline[linecolor=lightgray,linewidth=1pt](21,247)(235,103)
\psline[linecolor=lightgray,linewidth=1pt](20,9)(235,103)
\pspolygon[fillstyle=solid,fillcolor=LightBlue,linecolor=black](133,81)(86,122)(20,9)
\pspolygon[fillstyle=solid,fillcolor=LightBlue,linecolor=black](21,247)(86,122)(20,9)
\pspolygon[fillstyle=solid,fillcolor=LightBlue,linecolor=black](235,103)(133,81)(86,122)
\pspolygon[fillstyle=solid,fillcolor=LightBlue,linecolor=black](21,247)(235,103)(86,122)
\pscircle[fillstyle=solid,fillcolor=MidRed,linecolor=black](20,9){6}
\pscircle[fillstyle=solid,fillcolor=MidRed,linecolor=black](21,247){6}
\pscircle[fillstyle=solid,fillcolor=MidRed,linecolor=black](50,93){6}
\pscircle[fillstyle=solid,fillcolor=MidRed,linecolor=black](86,122){6}
\pscircle[fillstyle=solid,fillcolor=MidRed,linecolor=black](133,81){6}
\pscircle[fillstyle=solid,fillcolor=MidRed,linecolor=black](235,103){6}
}
\drawing{
\psline[linecolor=lightgray,linewidth=1pt](20,9)(21,247)
\psline[linecolor=lightgray,linewidth=1pt](21,247)(235,103)
\psline[linecolor=lightgray,linewidth=1pt](20,9)(235,103)
\pspolygon[fillstyle=solid,fillcolor=LightBlue,linecolor=black](235,103)(133,81)(50,93)
\pspolygon[fillstyle=solid,fillcolor=LightBlue,linecolor=black](235,103)(86,122)(50,93)
\pscircle[fillstyle=solid,fillcolor=MidRed,linecolor=black](20,9){6}
\pscircle[fillstyle=solid,fillcolor=MidRed,linecolor=black](21,247){6}
\pscircle[fillstyle=solid,fillcolor=MidRed,linecolor=black](50,93){6}
\pscircle[fillstyle=solid,fillcolor=MidRed,linecolor=black](86,122){6}
\pscircle[fillstyle=solid,fillcolor=MidRed,linecolor=black](133,81){6}
\pscircle[fillstyle=solid,fillcolor=MidRed,linecolor=black](235,103){6}
}
\drawing{
\psline[linecolor=lightgray,linewidth=1pt](20,9)(21,247)
\psline[linecolor=lightgray,linewidth=1pt](21,247)(235,103)
\psline[linecolor=lightgray,linewidth=1pt](20,9)(235,103)
\pspolygon[fillstyle=solid,fillcolor=LightBlue,linecolor=black](21,247)(133,81)(50,93)
\pscircle[fillstyle=solid,fillcolor=MidRed,linecolor=black](20,9){6}
\pscircle[fillstyle=solid,fillcolor=MidRed,linecolor=black](21,247){6}
\pscircle[fillstyle=solid,fillcolor=MidRed,linecolor=black](50,93){6}
\pscircle[fillstyle=solid,fillcolor=MidRed,linecolor=black](86,122){6}
\pscircle[fillstyle=solid,fillcolor=MidRed,linecolor=black](133,81){6}
\pscircle[fillstyle=solid,fillcolor=MidRed,linecolor=black](235,103){6}
}
\drawing{
\psline[linecolor=lightgray,linewidth=1pt](20,9)(21,247)
\psline[linecolor=lightgray,linewidth=1pt](21,247)(235,103)
\psline[linecolor=lightgray,linewidth=1pt](20,9)(235,103)
\pspolygon[fillstyle=solid,fillcolor=LightBlue,linecolor=black](235,103)(50,93)(20,9)
\pspolygon[fillstyle=solid,fillcolor=LightBlue,linecolor=black](21,247)(235,103)(50,93)
\pscircle[fillstyle=solid,fillcolor=MidRed,linecolor=black](20,9){6}
\pscircle[fillstyle=solid,fillcolor=MidRed,linecolor=black](21,247){6}
\pscircle[fillstyle=solid,fillcolor=MidRed,linecolor=black](50,93){6}
\pscircle[fillstyle=solid,fillcolor=MidRed,linecolor=black](86,122){6}
\pscircle[fillstyle=solid,fillcolor=MidRed,linecolor=black](133,81){6}
\pscircle[fillstyle=solid,fillcolor=MidRed,linecolor=black](235,103){6}
}
\drawing{
\psline[linecolor=lightgray,linewidth=1pt](20,9)(21,247)
\psline[linecolor=lightgray,linewidth=1pt](21,247)(235,103)
\psline[linecolor=lightgray,linewidth=1pt](20,9)(235,103)
\pspolygon[fillstyle=solid,fillcolor=LightBlue,linecolor=black](21,247)(235,103)(20,9)
\pscircle[fillstyle=solid,fillcolor=MidRed,linecolor=black](20,9){6}
\pscircle[fillstyle=solid,fillcolor=MidRed,linecolor=black](21,247){6}
\pscircle[fillstyle=solid,fillcolor=MidRed,linecolor=black](50,93){6}
\pscircle[fillstyle=solid,fillcolor=MidRed,linecolor=black](86,122){6}
\pscircle[fillstyle=solid,fillcolor=MidRed,linecolor=black](133,81){6}
\pscircle[fillstyle=solid,fillcolor=MidRed,linecolor=black](235,103){6}
}
\drawing{
\psline[linecolor=lightgray,linewidth=1pt](20,9)(21,247)
\psline[linecolor=lightgray,linewidth=1pt](21,247)(235,103)
\psline[linecolor=lightgray,linewidth=1pt](20,9)(235,103)
\pspolygon[fillstyle=solid,fillcolor=LightBlue,linecolor=black](235,103)(86,122)(20,9)
\pspolygon[fillstyle=solid,fillcolor=LightBlue,linecolor=black](86,122)(50,93)(20,9)
\pscircle[fillstyle=solid,fillcolor=MidRed,linecolor=black](20,9){6}
\pscircle[fillstyle=solid,fillcolor=MidRed,linecolor=black](21,247){6}
\pscircle[fillstyle=solid,fillcolor=MidRed,linecolor=black](50,93){6}
\pscircle[fillstyle=solid,fillcolor=MidRed,linecolor=black](86,122){6}
\pscircle[fillstyle=solid,fillcolor=MidRed,linecolor=black](133,81){6}
\pscircle[fillstyle=solid,fillcolor=MidRed,linecolor=black](235,103){6}
}
\drawing{
\psline[linecolor=lightgray,linewidth=1pt](20,9)(21,247)
\psline[linecolor=lightgray,linewidth=1pt](21,247)(235,103)
\psline[linecolor=lightgray,linewidth=1pt](20,9)(235,103)
\pspolygon[fillstyle=solid,fillcolor=LightBlue,linecolor=black](21,247)(133,81)(20,9)
\pscircle[fillstyle=solid,fillcolor=MidRed,linecolor=black](20,9){6}
\pscircle[fillstyle=solid,fillcolor=MidRed,linecolor=black](21,247){6}
\pscircle[fillstyle=solid,fillcolor=MidRed,linecolor=black](50,93){6}
\pscircle[fillstyle=solid,fillcolor=MidRed,linecolor=black](86,122){6}
\pscircle[fillstyle=solid,fillcolor=MidRed,linecolor=black](133,81){6}
\pscircle[fillstyle=solid,fillcolor=MidRed,linecolor=black](235,103){6}
}
\drawing{
\psline[linecolor=lightgray,linewidth=1pt](20,9)(21,247)
\psline[linecolor=lightgray,linewidth=1pt](21,247)(235,103)
\psline[linecolor=lightgray,linewidth=1pt](20,9)(235,103)
\pspolygon[fillstyle=solid,fillcolor=LightBlue,linecolor=black](133,81)(50,93)(20,9)
\pspolygon[fillstyle=solid,fillcolor=LightBlue,linecolor=black](133,81)(86,122)(50,93)
\pspolygon[fillstyle=solid,fillcolor=LightBlue,linecolor=black](21,247)(133,81)(86,122)
\pspolygon[fillstyle=solid,fillcolor=LightBlue,linecolor=black](21,247)(50,93)(20,9)
\pspolygon[fillstyle=solid,fillcolor=LightBlue,linecolor=black](21,247)(86,122)(50,93)
\pspolygon[fillstyle=solid,fillcolor=LightBlue,linecolor=black](21,247)(235,103)(133,81)
\pspolygon[fillstyle=solid,fillcolor=LightBlue,linecolor=black](235,103)(133,81)(20,9)
\pscircle[fillstyle=solid,fillcolor=MidRed,linecolor=black](20,9){6}
\pscircle[fillstyle=solid,fillcolor=MidRed,linecolor=black](21,247){6}
\pscircle[fillstyle=solid,fillcolor=MidRed,linecolor=black](50,93){6}
\pscircle[fillstyle=solid,fillcolor=MidRed,linecolor=black](86,122){6}
\pscircle[fillstyle=solid,fillcolor=MidRed,linecolor=black](133,81){6}
\pscircle[fillstyle=solid,fillcolor=MidRed,linecolor=black](235,103){6}
}
\NewOrderType 
\drawing{
\psline[linecolor=lightgray,linewidth=1pt](5,240)(253,136)
\psline[linecolor=lightgray,linewidth=1pt](5,240)(65,15)
\psline[linecolor=lightgray,linewidth=1pt](65,15)(253,136)
\pspolygon[fillstyle=solid,fillcolor=LightBlue,linecolor=black](194,131)(63,182)(65,15)
\pspolygon[fillstyle=solid,fillcolor=LightBlue,linecolor=black](5,240)(194,131)(63,182)
\pspolygon[fillstyle=solid,fillcolor=LightBlue,linecolor=black](5,240)(253,136)(194,131)
\pspolygon[fillstyle=solid,fillcolor=LightBlue,linecolor=black](5,240)(63,182)(65,15)
\pscircle[fillstyle=solid,fillcolor=MidRed,linecolor=black](5,240){6}
\pscircle[fillstyle=solid,fillcolor=MidRed,linecolor=black](63,182){6}
\pscircle[fillstyle=solid,fillcolor=MidRed,linecolor=black](65,15){6}
\pscircle[fillstyle=solid,fillcolor=MidRed,linecolor=black](101,83){6}
\pscircle[fillstyle=solid,fillcolor=MidRed,linecolor=black](194,131){6}
\pscircle[fillstyle=solid,fillcolor=MidRed,linecolor=black](253,136){6}
}
\drawing{
\psline[linecolor=lightgray,linewidth=1pt](5,240)(253,136)
\psline[linecolor=lightgray,linewidth=1pt](5,240)(65,15)
\psline[linecolor=lightgray,linewidth=1pt](65,15)(253,136)
\pspolygon[fillstyle=solid,fillcolor=LightBlue,linecolor=black](5,240)(253,136)(65,15)
\pscircle[fillstyle=solid,fillcolor=MidRed,linecolor=black](5,240){6}
\pscircle[fillstyle=solid,fillcolor=MidRed,linecolor=black](63,182){6}
\pscircle[fillstyle=solid,fillcolor=MidRed,linecolor=black](65,15){6}
\pscircle[fillstyle=solid,fillcolor=MidRed,linecolor=black](101,83){6}
\pscircle[fillstyle=solid,fillcolor=MidRed,linecolor=black](194,131){6}
\pscircle[fillstyle=solid,fillcolor=MidRed,linecolor=black](253,136){6}
}
\drawing{
\psline[linecolor=lightgray,linewidth=1pt](5,240)(253,136)
\psline[linecolor=lightgray,linewidth=1pt](5,240)(65,15)
\psline[linecolor=lightgray,linewidth=1pt](65,15)(253,136)
\pspolygon[fillstyle=solid,fillcolor=LightBlue,linecolor=black](5,240)(253,136)(101,83)
\pscircle[fillstyle=solid,fillcolor=MidRed,linecolor=black](5,240){6}
\pscircle[fillstyle=solid,fillcolor=MidRed,linecolor=black](63,182){6}
\pscircle[fillstyle=solid,fillcolor=MidRed,linecolor=black](65,15){6}
\pscircle[fillstyle=solid,fillcolor=MidRed,linecolor=black](101,83){6}
\pscircle[fillstyle=solid,fillcolor=MidRed,linecolor=black](194,131){6}
\pscircle[fillstyle=solid,fillcolor=MidRed,linecolor=black](253,136){6}
}
\drawing{
\psline[linecolor=lightgray,linewidth=1pt](5,240)(253,136)
\psline[linecolor=lightgray,linewidth=1pt](5,240)(65,15)
\psline[linecolor=lightgray,linewidth=1pt](65,15)(253,136)
\pspolygon[fillstyle=solid,fillcolor=LightBlue,linecolor=black](253,136)(63,182)(65,15)
\pscircle[fillstyle=solid,fillcolor=MidRed,linecolor=black](5,240){6}
\pscircle[fillstyle=solid,fillcolor=MidRed,linecolor=black](63,182){6}
\pscircle[fillstyle=solid,fillcolor=MidRed,linecolor=black](65,15){6}
\pscircle[fillstyle=solid,fillcolor=MidRed,linecolor=black](101,83){6}
\pscircle[fillstyle=solid,fillcolor=MidRed,linecolor=black](194,131){6}
\pscircle[fillstyle=solid,fillcolor=MidRed,linecolor=black](253,136){6}
}
\drawing{
\psline[linecolor=lightgray,linewidth=1pt](5,240)(253,136)
\psline[linecolor=lightgray,linewidth=1pt](5,240)(65,15)
\psline[linecolor=lightgray,linewidth=1pt](65,15)(253,136)
\pspolygon[fillstyle=solid,fillcolor=LightBlue,linecolor=black](253,136)(63,182)(101,83)
\pspolygon[fillstyle=solid,fillcolor=LightBlue,linecolor=black](63,182)(101,83)(65,15)
\pscircle[fillstyle=solid,fillcolor=MidRed,linecolor=black](5,240){6}
\pscircle[fillstyle=solid,fillcolor=MidRed,linecolor=black](63,182){6}
\pscircle[fillstyle=solid,fillcolor=MidRed,linecolor=black](65,15){6}
\pscircle[fillstyle=solid,fillcolor=MidRed,linecolor=black](101,83){6}
\pscircle[fillstyle=solid,fillcolor=MidRed,linecolor=black](194,131){6}
\pscircle[fillstyle=solid,fillcolor=MidRed,linecolor=black](253,136){6}
}
\drawing{
\psline[linecolor=lightgray,linewidth=1pt](5,240)(253,136)
\psline[linecolor=lightgray,linewidth=1pt](5,240)(65,15)
\psline[linecolor=lightgray,linewidth=1pt](65,15)(253,136)
\pspolygon[fillstyle=solid,fillcolor=LightBlue,linecolor=black](5,240)(194,131)(65,15)
\pscircle[fillstyle=solid,fillcolor=MidRed,linecolor=black](5,240){6}
\pscircle[fillstyle=solid,fillcolor=MidRed,linecolor=black](63,182){6}
\pscircle[fillstyle=solid,fillcolor=MidRed,linecolor=black](65,15){6}
\pscircle[fillstyle=solid,fillcolor=MidRed,linecolor=black](101,83){6}
\pscircle[fillstyle=solid,fillcolor=MidRed,linecolor=black](194,131){6}
\pscircle[fillstyle=solid,fillcolor=MidRed,linecolor=black](253,136){6}
}
\drawing{
\psline[linecolor=lightgray,linewidth=1pt](5,240)(253,136)
\psline[linecolor=lightgray,linewidth=1pt](5,240)(65,15)
\psline[linecolor=lightgray,linewidth=1pt](65,15)(253,136)
\pspolygon[fillstyle=solid,fillcolor=LightBlue,linecolor=black](253,136)(101,83)(65,15)
\pspolygon[fillstyle=solid,fillcolor=LightBlue,linecolor=black](253,136)(194,131)(101,83)
\pspolygon[fillstyle=solid,fillcolor=LightBlue,linecolor=black](5,240)(194,131)(101,83)
\pscircle[fillstyle=solid,fillcolor=MidRed,linecolor=black](5,240){6}
\pscircle[fillstyle=solid,fillcolor=MidRed,linecolor=black](63,182){6}
\pscircle[fillstyle=solid,fillcolor=MidRed,linecolor=black](65,15){6}
\pscircle[fillstyle=solid,fillcolor=MidRed,linecolor=black](101,83){6}
\pscircle[fillstyle=solid,fillcolor=MidRed,linecolor=black](194,131){6}
\pscircle[fillstyle=solid,fillcolor=MidRed,linecolor=black](253,136){6}
}
\drawing{
\psline[linecolor=lightgray,linewidth=1pt](5,240)(253,136)
\psline[linecolor=lightgray,linewidth=1pt](5,240)(65,15)
\psline[linecolor=lightgray,linewidth=1pt](65,15)(253,136)
\pspolygon[fillstyle=solid,fillcolor=LightBlue,linecolor=black](194,131)(101,83)(65,15)
\pspolygon[fillstyle=solid,fillcolor=LightBlue,linecolor=black](194,131)(63,182)(101,83)
\pspolygon[fillstyle=solid,fillcolor=LightBlue,linecolor=black](253,136)(194,131)(63,182)
\pspolygon[fillstyle=solid,fillcolor=LightBlue,linecolor=black](5,240)(63,182)(101,83)
\pspolygon[fillstyle=solid,fillcolor=LightBlue,linecolor=black](5,240)(101,83)(65,15)
\pspolygon[fillstyle=solid,fillcolor=LightBlue,linecolor=black](253,136)(194,131)(65,15)
\pspolygon[fillstyle=solid,fillcolor=LightBlue,linecolor=black](5,240)(253,136)(63,182)
\pscircle[fillstyle=solid,fillcolor=MidRed,linecolor=black](5,240){6}
\pscircle[fillstyle=solid,fillcolor=MidRed,linecolor=black](63,182){6}
\pscircle[fillstyle=solid,fillcolor=MidRed,linecolor=black](65,15){6}
\pscircle[fillstyle=solid,fillcolor=MidRed,linecolor=black](101,83){6}
\pscircle[fillstyle=solid,fillcolor=MidRed,linecolor=black](194,131){6}
\pscircle[fillstyle=solid,fillcolor=MidRed,linecolor=black](253,136){6}
}
\NewOrderType 
\drawing{
\psline[linecolor=lightgray,linewidth=1pt](29,21)(107,235)
\psline[linecolor=lightgray,linewidth=1pt](107,235)(226,89)
\psline[linecolor=lightgray,linewidth=1pt](29,21)(226,89)
\pspolygon[fillstyle=solid,fillcolor=LightBlue,linecolor=black](107,235)(148,119)(109,65)
\pspolygon[fillstyle=solid,fillcolor=LightBlue,linecolor=black](107,235)(148,119)(226,89)
\pspolygon[fillstyle=solid,fillcolor=LightBlue,linecolor=black](148,119)(109,65)(226,89)
\pspolygon[fillstyle=solid,fillcolor=LightBlue,linecolor=black](29,21)(107,235)(109,65)
\pscircle[fillstyle=solid,fillcolor=MidRed,linecolor=black](29,21){6}
\pscircle[fillstyle=solid,fillcolor=MidRed,linecolor=black](80,104){6}
\pscircle[fillstyle=solid,fillcolor=MidRed,linecolor=black](107,235){6}
\pscircle[fillstyle=solid,fillcolor=MidRed,linecolor=black](109,65){6}
\pscircle[fillstyle=solid,fillcolor=MidRed,linecolor=black](148,119){6}
\pscircle[fillstyle=solid,fillcolor=MidRed,linecolor=black](226,89){6}
}
\drawing{
\psline[linecolor=lightgray,linewidth=1pt](29,21)(107,235)
\psline[linecolor=lightgray,linewidth=1pt](107,235)(226,89)
\psline[linecolor=lightgray,linewidth=1pt](29,21)(226,89)
\pspolygon[fillstyle=solid,fillcolor=LightBlue,linecolor=black](80,104)(148,119)(226,89)
\pspolygon[fillstyle=solid,fillcolor=LightBlue,linecolor=black](80,104)(109,65)(226,89)
\pscircle[fillstyle=solid,fillcolor=MidRed,linecolor=black](29,21){6}
\pscircle[fillstyle=solid,fillcolor=MidRed,linecolor=black](80,104){6}
\pscircle[fillstyle=solid,fillcolor=MidRed,linecolor=black](107,235){6}
\pscircle[fillstyle=solid,fillcolor=MidRed,linecolor=black](109,65){6}
\pscircle[fillstyle=solid,fillcolor=MidRed,linecolor=black](148,119){6}
\pscircle[fillstyle=solid,fillcolor=MidRed,linecolor=black](226,89){6}
}
\drawing{
\psline[linecolor=lightgray,linewidth=1pt](29,21)(107,235)
\psline[linecolor=lightgray,linewidth=1pt](107,235)(226,89)
\psline[linecolor=lightgray,linewidth=1pt](29,21)(226,89)
\pspolygon[fillstyle=solid,fillcolor=LightBlue,linecolor=black](80,104)(148,119)(109,65)
\pscircle[fillstyle=solid,fillcolor=MidRed,linecolor=black](29,21){6}
\pscircle[fillstyle=solid,fillcolor=MidRed,linecolor=black](80,104){6}
\pscircle[fillstyle=solid,fillcolor=MidRed,linecolor=black](107,235){6}
\pscircle[fillstyle=solid,fillcolor=MidRed,linecolor=black](109,65){6}
\pscircle[fillstyle=solid,fillcolor=MidRed,linecolor=black](148,119){6}
\pscircle[fillstyle=solid,fillcolor=MidRed,linecolor=black](226,89){6}
}
\drawing{
\psline[linecolor=lightgray,linewidth=1pt](29,21)(107,235)
\psline[linecolor=lightgray,linewidth=1pt](107,235)(226,89)
\psline[linecolor=lightgray,linewidth=1pt](29,21)(226,89)
\pspolygon[fillstyle=solid,fillcolor=LightBlue,linecolor=black](29,21)(107,235)(148,119)
\pspolygon[fillstyle=solid,fillcolor=LightBlue,linecolor=black](29,21)(148,119)(226,89)
\pscircle[fillstyle=solid,fillcolor=MidRed,linecolor=black](29,21){6}
\pscircle[fillstyle=solid,fillcolor=MidRed,linecolor=black](80,104){6}
\pscircle[fillstyle=solid,fillcolor=MidRed,linecolor=black](107,235){6}
\pscircle[fillstyle=solid,fillcolor=MidRed,linecolor=black](109,65){6}
\pscircle[fillstyle=solid,fillcolor=MidRed,linecolor=black](148,119){6}
\pscircle[fillstyle=solid,fillcolor=MidRed,linecolor=black](226,89){6}
}
\drawing{
\psline[linecolor=lightgray,linewidth=1pt](29,21)(107,235)
\psline[linecolor=lightgray,linewidth=1pt](107,235)(226,89)
\psline[linecolor=lightgray,linewidth=1pt](29,21)(226,89)
\pspolygon[fillstyle=solid,fillcolor=LightBlue,linecolor=black](107,235)(80,104)(226,89)
\pspolygon[fillstyle=solid,fillcolor=LightBlue,linecolor=black](29,21)(80,104)(226,89)
\pscircle[fillstyle=solid,fillcolor=MidRed,linecolor=black](29,21){6}
\pscircle[fillstyle=solid,fillcolor=MidRed,linecolor=black](80,104){6}
\pscircle[fillstyle=solid,fillcolor=MidRed,linecolor=black](107,235){6}
\pscircle[fillstyle=solid,fillcolor=MidRed,linecolor=black](109,65){6}
\pscircle[fillstyle=solid,fillcolor=MidRed,linecolor=black](148,119){6}
\pscircle[fillstyle=solid,fillcolor=MidRed,linecolor=black](226,89){6}
}
\drawing{
\psline[linecolor=lightgray,linewidth=1pt](29,21)(107,235)
\psline[linecolor=lightgray,linewidth=1pt](107,235)(226,89)
\psline[linecolor=lightgray,linewidth=1pt](29,21)(226,89)
\pspolygon[fillstyle=solid,fillcolor=LightBlue,linecolor=black](29,21)(107,235)(226,89)
\pscircle[fillstyle=solid,fillcolor=MidRed,linecolor=black](29,21){6}
\pscircle[fillstyle=solid,fillcolor=MidRed,linecolor=black](80,104){6}
\pscircle[fillstyle=solid,fillcolor=MidRed,linecolor=black](107,235){6}
\pscircle[fillstyle=solid,fillcolor=MidRed,linecolor=black](109,65){6}
\pscircle[fillstyle=solid,fillcolor=MidRed,linecolor=black](148,119){6}
\pscircle[fillstyle=solid,fillcolor=MidRed,linecolor=black](226,89){6}
}
\drawing{
\psline[linecolor=lightgray,linewidth=1pt](29,21)(107,235)
\psline[linecolor=lightgray,linewidth=1pt](107,235)(226,89)
\psline[linecolor=lightgray,linewidth=1pt](29,21)(226,89)
\pspolygon[fillstyle=solid,fillcolor=LightBlue,linecolor=black](107,235)(80,104)(148,119)
\pspolygon[fillstyle=solid,fillcolor=LightBlue,linecolor=black](29,21)(148,119)(109,65)
\pspolygon[fillstyle=solid,fillcolor=LightBlue,linecolor=black](29,21)(80,104)(148,119)
\pscircle[fillstyle=solid,fillcolor=MidRed,linecolor=black](29,21){6}
\pscircle[fillstyle=solid,fillcolor=MidRed,linecolor=black](80,104){6}
\pscircle[fillstyle=solid,fillcolor=MidRed,linecolor=black](107,235){6}
\pscircle[fillstyle=solid,fillcolor=MidRed,linecolor=black](109,65){6}
\pscircle[fillstyle=solid,fillcolor=MidRed,linecolor=black](148,119){6}
\pscircle[fillstyle=solid,fillcolor=MidRed,linecolor=black](226,89){6}
}
\drawing{
\psline[linecolor=lightgray,linewidth=1pt](29,21)(107,235)
\psline[linecolor=lightgray,linewidth=1pt](107,235)(226,89)
\psline[linecolor=lightgray,linewidth=1pt](29,21)(226,89)
\pspolygon[fillstyle=solid,fillcolor=LightBlue,linecolor=black](107,235)(109,65)(226,89)
\pspolygon[fillstyle=solid,fillcolor=LightBlue,linecolor=black](107,235)(80,104)(109,65)
\pspolygon[fillstyle=solid,fillcolor=LightBlue,linecolor=black](29,21)(107,235)(80,104)
\pspolygon[fillstyle=solid,fillcolor=LightBlue,linecolor=black](29,21)(80,104)(109,65)
\pspolygon[fillstyle=solid,fillcolor=LightBlue,linecolor=black](29,21)(109,65)(226,89)
\pscircle[fillstyle=solid,fillcolor=MidRed,linecolor=black](29,21){6}
\pscircle[fillstyle=solid,fillcolor=MidRed,linecolor=black](80,104){6}
\pscircle[fillstyle=solid,fillcolor=MidRed,linecolor=black](107,235){6}
\pscircle[fillstyle=solid,fillcolor=MidRed,linecolor=black](109,65){6}
\pscircle[fillstyle=solid,fillcolor=MidRed,linecolor=black](148,119){6}
\pscircle[fillstyle=solid,fillcolor=MidRed,linecolor=black](226,89){6}
}
\NewOrderType

\newpage
\section{13-Colouring the Triangles Determined by a Particular Set of 7 Points}
\label{sec:SevenPoints}
\psset{unit=0.16mm}

\noindent
\drawing{
\psline[linecolor=lightgray,linewidth=1pt](7,159)(248,255)
\psline[linecolor=lightgray,linewidth=1pt](7,159)(124,1)
\psline[linecolor=lightgray,linewidth=1pt](124,1)(248,255)
\pspolygon[fillstyle=solid,fillcolor=LightBlue,linecolor=black](228,228)(68,160)(124,1)
\pspolygon[fillstyle=solid,fillcolor=LightBlue,linecolor=black](248,255)(228,228)(68,160)
\pspolygon[fillstyle=solid,fillcolor=LightBlue,linecolor=black](248,255)(68,160)(25,150)
\pspolygon[fillstyle=solid,fillcolor=LightBlue,linecolor=black](68,160)(25,150)(124,1)
\pspolygon[fillstyle=solid,fillcolor=LightBlue,linecolor=black](248,255)(228,228)(124,1)
\pspolygon[fillstyle=solid,fillcolor=LightBlue,linecolor=black](7,159)(25,150)(124,1)
\pspolygon[fillstyle=solid,fillcolor=LightBlue,linecolor=black](7,159)(248,255)(25,150)
\pscircle[fillstyle=solid,fillcolor=MidRed,linecolor=black](7,159){4}
\pscircle[fillstyle=solid,fillcolor=MidRed,linecolor=black](25,150){4}
\pscircle[fillstyle=solid,fillcolor=MidRed,linecolor=black](68,160){4}
\pscircle[fillstyle=solid,fillcolor=MidRed,linecolor=black](74,86){4}
\pscircle[fillstyle=solid,fillcolor=MidRed,linecolor=black](124,1){4}
\pscircle[fillstyle=solid,fillcolor=MidRed,linecolor=black](228,228){4}
\pscircle[fillstyle=solid,fillcolor=MidRed,linecolor=black](248,255){4}
}
\drawing{
\psline[linecolor=lightgray,linewidth=1pt](7,159)(248,255)
\psline[linecolor=lightgray,linewidth=1pt](7,159)(124,1)
\psline[linecolor=lightgray,linewidth=1pt](124,1)(248,255)
\pspolygon[fillstyle=solid,fillcolor=LightBlue,linecolor=black](228,228)(25,150)(124,1)
\pspolygon[fillstyle=solid,fillcolor=LightBlue,linecolor=black](7,159)(228,228)(25,150)
\pspolygon[fillstyle=solid,fillcolor=LightBlue,linecolor=black](7,159)(248,255)(228,228)
\pscircle[fillstyle=solid,fillcolor=MidRed,linecolor=black](7,159){4}
\pscircle[fillstyle=solid,fillcolor=MidRed,linecolor=black](25,150){4}
\pscircle[fillstyle=solid,fillcolor=MidRed,linecolor=black](68,160){4}
\pscircle[fillstyle=solid,fillcolor=MidRed,linecolor=black](74,86){4}
\pscircle[fillstyle=solid,fillcolor=MidRed,linecolor=black](124,1){4}
\pscircle[fillstyle=solid,fillcolor=MidRed,linecolor=black](228,228){4}
\pscircle[fillstyle=solid,fillcolor=MidRed,linecolor=black](248,255){4}
}
\drawing{
\psline[linecolor=lightgray,linewidth=1pt](7,159)(248,255)
\psline[linecolor=lightgray,linewidth=1pt](7,159)(124,1)
\psline[linecolor=lightgray,linewidth=1pt](124,1)(248,255)
\pspolygon[fillstyle=solid,fillcolor=LightBlue,linecolor=black](228,228)(25,150)(74,86)
\pspolygon[fillstyle=solid,fillcolor=LightBlue,linecolor=black](248,255)(228,228)(25,150)
\pspolygon[fillstyle=solid,fillcolor=LightBlue,linecolor=black](7,159)(25,150)(74,86)
\pspolygon[fillstyle=solid,fillcolor=LightBlue,linecolor=black](7,159)(74,86)(124,1)
\pspolygon[fillstyle=solid,fillcolor=LightBlue,linecolor=black](228,228)(74,86)(124,1)
\pscircle[fillstyle=solid,fillcolor=MidRed,linecolor=black](7,159){4}
\pscircle[fillstyle=solid,fillcolor=MidRed,linecolor=black](25,150){4}
\pscircle[fillstyle=solid,fillcolor=MidRed,linecolor=black](68,160){4}
\pscircle[fillstyle=solid,fillcolor=MidRed,linecolor=black](74,86){4}
\pscircle[fillstyle=solid,fillcolor=MidRed,linecolor=black](124,1){4}
\pscircle[fillstyle=solid,fillcolor=MidRed,linecolor=black](228,228){4}
\pscircle[fillstyle=solid,fillcolor=MidRed,linecolor=black](248,255){4}
}
\drawing{
\psline[linecolor=lightgray,linewidth=1pt](7,159)(248,255)
\psline[linecolor=lightgray,linewidth=1pt](7,159)(124,1)
\psline[linecolor=lightgray,linewidth=1pt](124,1)(248,255)
\pspolygon[fillstyle=solid,fillcolor=LightBlue,linecolor=black](228,228)(68,160)(25,150)
\pspolygon[fillstyle=solid,fillcolor=LightBlue,linecolor=black](68,160)(74,86)(124,1)
\pspolygon[fillstyle=solid,fillcolor=LightBlue,linecolor=black](68,160)(25,150)(74,86)
\pspolygon[fillstyle=solid,fillcolor=LightBlue,linecolor=black](25,150)(74,86)(124,1)
\pscircle[fillstyle=solid,fillcolor=MidRed,linecolor=black](7,159){4}
\pscircle[fillstyle=solid,fillcolor=MidRed,linecolor=black](25,150){4}
\pscircle[fillstyle=solid,fillcolor=MidRed,linecolor=black](68,160){4}
\pscircle[fillstyle=solid,fillcolor=MidRed,linecolor=black](74,86){4}
\pscircle[fillstyle=solid,fillcolor=MidRed,linecolor=black](124,1){4}
\pscircle[fillstyle=solid,fillcolor=MidRed,linecolor=black](228,228){4}
\pscircle[fillstyle=solid,fillcolor=MidRed,linecolor=black](248,255){4}
}

\drawing{
\psline[linecolor=lightgray,linewidth=1pt](7,159)(248,255)
\psline[linecolor=lightgray,linewidth=1pt](7,159)(124,1)
\psline[linecolor=lightgray,linewidth=1pt](124,1)(248,255)
\pspolygon[fillstyle=solid,fillcolor=LightBlue,linecolor=black](7,159)(228,228)(74,86)
\pscircle[fillstyle=solid,fillcolor=MidRed,linecolor=black](7,159){4}
\pscircle[fillstyle=solid,fillcolor=MidRed,linecolor=black](25,150){4}
\pscircle[fillstyle=solid,fillcolor=MidRed,linecolor=black](68,160){4}
\pscircle[fillstyle=solid,fillcolor=MidRed,linecolor=black](74,86){4}
\pscircle[fillstyle=solid,fillcolor=MidRed,linecolor=black](124,1){4}
\pscircle[fillstyle=solid,fillcolor=MidRed,linecolor=black](228,228){4}
\pscircle[fillstyle=solid,fillcolor=MidRed,linecolor=black](248,255){4}
}
\drawing{
\psline[linecolor=lightgray,linewidth=1pt](7,159)(248,255)
\psline[linecolor=lightgray,linewidth=1pt](7,159)(124,1)
\psline[linecolor=lightgray,linewidth=1pt](124,1)(248,255)
\pspolygon[fillstyle=solid,fillcolor=LightBlue,linecolor=black](7,159)(248,255)(124,1)
\pscircle[fillstyle=solid,fillcolor=MidRed,linecolor=black](7,159){4}
\pscircle[fillstyle=solid,fillcolor=MidRed,linecolor=black](25,150){4}
\pscircle[fillstyle=solid,fillcolor=MidRed,linecolor=black](68,160){4}
\pscircle[fillstyle=solid,fillcolor=MidRed,linecolor=black](74,86){4}
\pscircle[fillstyle=solid,fillcolor=MidRed,linecolor=black](124,1){4}
\pscircle[fillstyle=solid,fillcolor=MidRed,linecolor=black](228,228){4}
\pscircle[fillstyle=solid,fillcolor=MidRed,linecolor=black](248,255){4}
}
\drawing{
\psline[linecolor=lightgray,linewidth=1pt](7,159)(248,255)
\psline[linecolor=lightgray,linewidth=1pt](7,159)(124,1)
\psline[linecolor=lightgray,linewidth=1pt](124,1)(248,255)
\pspolygon[fillstyle=solid,fillcolor=LightBlue,linecolor=black](7,159)(228,228)(124,1)
\pscircle[fillstyle=solid,fillcolor=MidRed,linecolor=black](7,159){4}
\pscircle[fillstyle=solid,fillcolor=MidRed,linecolor=black](25,150){4}
\pscircle[fillstyle=solid,fillcolor=MidRed,linecolor=black](68,160){4}
\pscircle[fillstyle=solid,fillcolor=MidRed,linecolor=black](74,86){4}
\pscircle[fillstyle=solid,fillcolor=MidRed,linecolor=black](124,1){4}
\pscircle[fillstyle=solid,fillcolor=MidRed,linecolor=black](228,228){4}
\pscircle[fillstyle=solid,fillcolor=MidRed,linecolor=black](248,255){4}
}
\drawing{
\psline[linecolor=lightgray,linewidth=1pt](7,159)(248,255)
\psline[linecolor=lightgray,linewidth=1pt](7,159)(124,1)
\psline[linecolor=lightgray,linewidth=1pt](124,1)(248,255)
\pspolygon[fillstyle=solid,fillcolor=LightBlue,linecolor=black](248,255)(68,160)(74,86)
\pspolygon[fillstyle=solid,fillcolor=LightBlue,linecolor=black](248,255)(74,86)(124,1)
\pspolygon[fillstyle=solid,fillcolor=LightBlue,linecolor=black](7,159)(68,160)(25,150)
\pscircle[fillstyle=solid,fillcolor=MidRed,linecolor=black](7,159){4}
\pscircle[fillstyle=solid,fillcolor=MidRed,linecolor=black](25,150){4}
\pscircle[fillstyle=solid,fillcolor=MidRed,linecolor=black](68,160){4}
\pscircle[fillstyle=solid,fillcolor=MidRed,linecolor=black](74,86){4}
\pscircle[fillstyle=solid,fillcolor=MidRed,linecolor=black](124,1){4}
\pscircle[fillstyle=solid,fillcolor=MidRed,linecolor=black](228,228){4}
\pscircle[fillstyle=solid,fillcolor=MidRed,linecolor=black](248,255){4}
}

\drawing{
\psline[linecolor=lightgray,linewidth=1pt](7,159)(248,255)
\psline[linecolor=lightgray,linewidth=1pt](7,159)(124,1)
\psline[linecolor=lightgray,linewidth=1pt](124,1)(248,255)
\pspolygon[fillstyle=solid,fillcolor=LightBlue,linecolor=black](248,255)(68,160)(124,1)
\pspolygon[fillstyle=solid,fillcolor=LightBlue,linecolor=black](7,159)(248,255)(68,160)
\pspolygon[fillstyle=solid,fillcolor=LightBlue,linecolor=black](7,159)(68,160)(124,1)
\pscircle[fillstyle=solid,fillcolor=MidRed,linecolor=black](7,159){4}
\pscircle[fillstyle=solid,fillcolor=MidRed,linecolor=black](25,150){4}
\pscircle[fillstyle=solid,fillcolor=MidRed,linecolor=black](68,160){4}
\pscircle[fillstyle=solid,fillcolor=MidRed,linecolor=black](74,86){4}
\pscircle[fillstyle=solid,fillcolor=MidRed,linecolor=black](124,1){4}
\pscircle[fillstyle=solid,fillcolor=MidRed,linecolor=black](228,228){4}
\pscircle[fillstyle=solid,fillcolor=MidRed,linecolor=black](248,255){4}
}
\drawing{
\psline[linecolor=lightgray,linewidth=1pt](7,159)(248,255)
\psline[linecolor=lightgray,linewidth=1pt](7,159)(124,1)
\psline[linecolor=lightgray,linewidth=1pt](124,1)(248,255)
\pspolygon[fillstyle=solid,fillcolor=LightBlue,linecolor=black](248,255)(25,150)(124,1)
\pscircle[fillstyle=solid,fillcolor=MidRed,linecolor=black](7,159){4}
\pscircle[fillstyle=solid,fillcolor=MidRed,linecolor=black](25,150){4}
\pscircle[fillstyle=solid,fillcolor=MidRed,linecolor=black](68,160){4}
\pscircle[fillstyle=solid,fillcolor=MidRed,linecolor=black](74,86){4}
\pscircle[fillstyle=solid,fillcolor=MidRed,linecolor=black](124,1){4}
\pscircle[fillstyle=solid,fillcolor=MidRed,linecolor=black](228,228){4}
\pscircle[fillstyle=solid,fillcolor=MidRed,linecolor=black](248,255){4}
}
\drawing{
\psline[linecolor=lightgray,linewidth=1pt](7,159)(248,255)
\psline[linecolor=lightgray,linewidth=1pt](7,159)(124,1)
\psline[linecolor=lightgray,linewidth=1pt](124,1)(248,255)
\pspolygon[fillstyle=solid,fillcolor=LightBlue,linecolor=black](248,255)(228,228)(74,86)
\pspolygon[fillstyle=solid,fillcolor=LightBlue,linecolor=black](248,255)(25,150)(74,86)
\pscircle[fillstyle=solid,fillcolor=MidRed,linecolor=black](7,159){4}
\pscircle[fillstyle=solid,fillcolor=MidRed,linecolor=black](25,150){4}
\pscircle[fillstyle=solid,fillcolor=MidRed,linecolor=black](68,160){4}
\pscircle[fillstyle=solid,fillcolor=MidRed,linecolor=black](74,86){4}
\pscircle[fillstyle=solid,fillcolor=MidRed,linecolor=black](124,1){4}
\pscircle[fillstyle=solid,fillcolor=MidRed,linecolor=black](228,228){4}
\pscircle[fillstyle=solid,fillcolor=MidRed,linecolor=black](248,255){4}
}
\drawing{
\psline[linecolor=lightgray,linewidth=1pt](7,159)(248,255)
\psline[linecolor=lightgray,linewidth=1pt](7,159)(124,1)
\psline[linecolor=lightgray,linewidth=1pt](124,1)(248,255)
\pspolygon[fillstyle=solid,fillcolor=LightBlue,linecolor=black](228,228)(68,160)(74,86)
\pspolygon[fillstyle=solid,fillcolor=LightBlue,linecolor=black](7,159)(228,228)(68,160)
\pspolygon[fillstyle=solid,fillcolor=LightBlue,linecolor=black](7,159)(68,160)(74,86)
\pscircle[fillstyle=solid,fillcolor=MidRed,linecolor=black](7,159){4}
\pscircle[fillstyle=solid,fillcolor=MidRed,linecolor=black](25,150){4}
\pscircle[fillstyle=solid,fillcolor=MidRed,linecolor=black](68,160){4}
\pscircle[fillstyle=solid,fillcolor=MidRed,linecolor=black](74,86){4}
\pscircle[fillstyle=solid,fillcolor=MidRed,linecolor=black](124,1){4}
\pscircle[fillstyle=solid,fillcolor=MidRed,linecolor=black](228,228){4}
\pscircle[fillstyle=solid,fillcolor=MidRed,linecolor=black](248,255){4}
}
\drawing{
\psline[linecolor=lightgray,linewidth=1pt](7,159)(248,255)
\psline[linecolor=lightgray,linewidth=1pt](7,159)(124,1)
\psline[linecolor=lightgray,linewidth=1pt](124,1)(248,255)
\pspolygon[fillstyle=solid,fillcolor=LightBlue,linecolor=black](7,159)(248,255)(74,86)
\pscircle[fillstyle=solid,fillcolor=MidRed,linecolor=black](7,159){4}
\pscircle[fillstyle=solid,fillcolor=MidRed,linecolor=black](25,150){4}
\pscircle[fillstyle=solid,fillcolor=MidRed,linecolor=black](68,160){4}
\pscircle[fillstyle=solid,fillcolor=MidRed,linecolor=black](74,86){4}
\pscircle[fillstyle=solid,fillcolor=MidRed,linecolor=black](124,1){4}
\pscircle[fillstyle=solid,fillcolor=MidRed,linecolor=black](228,228){4}
\pscircle[fillstyle=solid,fillcolor=MidRed,linecolor=black](248,255){4}
}

\end{document}